\newcommand\redout{\bgroup\markoverwith
{\textcolor{red}{\rule[.5ex]{2pt}{0.4pt}}}\ULon}
\newcommand{\C}{\mathbb{C}}
\newcommand{\R}{\mathbb{R}}
\newcommand{\T}{\mathbb{T}}
\theoremstyle{definition}
\newtheorem{obs}{Remark}[section]
\newtheorem{conj}[obs]{Conjecture}
\theoremstyle{plain}
\newtheorem{prop}[obs]{Proposition}
\newtheorem{teo}[obs]{Theorem}
\newtheorem{lem}[obs]{Lemma}
\newtheorem{cor}[obs]{Corollary}
\newcommand*\colvec[3][]{
    \begin{pmatrix}\ifx\relax#1\relax\else#1\\\fi#2\\#3\end{pmatrix}
}
\def\demP215{{\bf Proof of Proposition 1.6:}\hspace{.2in}}
\title{Julia sets  for Fibonacci endomorphisms of $\mathbb{C}^2$ and  $\mathbb{R}^2$}
\author{S. Bonnot, A. de Carvalho, A. Messaoudi}
\date{}
\begin{document}

\maketitle





\medskip

\centerline{\scshape Sylvain Bonnot}
\medskip
{\footnotesize
 \centerline{Instituto de Matem\'atica e Estat\'istica}
   \centerline{Universidade de S\~ao Paulo, Rua do Mat\~ao
1010}
\centerline{S\~ao Paulo, SP, Brasil}
} 

\medskip

\centerline{\scshape Andr\'e de Carvalho}
\medskip
{\footnotesize
 \centerline{Instituto de Matem\'atica e Estat\'istica}
   \centerline{Universidade de S\~ao Paulo, Rua do Mat\~ao
1010}
\centerline{S\~ao Paulo, SP, Brasil}
} 

\medskip

\centerline{\scshape Ali Messaoudi}
\medskip
{\footnotesize
 \centerline{Departamento de Matem\'atica, Universidade Estadual Paulista}
   \centerline{Rua Cristov\~ao Colombo, 2265, CEP 15054-0000}
\centerline{S\~ao
Jos\'e do Rio Preto-SP, Brasil}
} 

\begin{abstract}
We study the dynamics of the family $f_c(x, y)= (xy+c, x)$ of endomorphisms of 
$\mathbb{R}^2$ and $\mathbb{C}^2$, where $c$ is a real or complex parameter. Such maps 
can be seen as perturbations of the map $f_0(x,y)=(xy,x)$, which is a complexification of the
Anosov torus map $(u,v) \mapsto (u+v,u)$. 

\end{abstract}

\section {Introduction}

The field of mathematics which is now called {\em complex dynamics} was started by Fatou and 
Julia in the beginning of the 20th century: in the late 1910's, they proved a number of results 
about the iteration of polynomial endomorphisms of the Riemann sphere. The field lay essentially 
dormant for several decades until in the early 
1980's, with the advent of computers, it
experienced a vigorous rebirth, became a vibrant area of mathematics which attracted many 
researchers and eventually produced beautiful results with ties to many other areas of
mathematical research (see \cite{Mil} and \cite{cg} for an overview of the area). The study of higher dimensional complex dynamics started in the late 1980's
with the work of Hubbard, Fornaess-Sybony and Bedford-Smillie (see for example \cite{BS}). These and other authors studied 
extensively the complex H\'enon family of polynomial diffeomorphisms of $\C^{2}$. 

Friedland and Milnor (see \cite{FM})
proved early on (1989) that a polynomial diffeomorphism of~$\C^{2}$ with non-trivial dynamics 
is a composition of generalized H\'enon maps.
This article deals with endomorphisms of~$\C^{2}$ which are not diffeomorphisms. More precisely, 
we study the family $f_{c}(x,y)=(xy+c,x)$, where $(x,y)\in\C^{2}$, and $c\in\C$ is a parameter.  Notice that
such maps send the whole line $\{x=0\}$ to the point $(c,0)$, but are diffeomorphisms onto their 
images away 
from this line; also, for real $c$, they can be viewed as endomorphisms of $\R^{2}$. When $c=0$, the 
map $f_{0}(x,y)=(xy,x)$ can be understood as a product, one of whose factors is the linear map 
$(r,s)\mapsto (r+s,r)$, the other being the torus Anosov induced by this same matrix. The dynamics is 
thus easily understood and this is spelled out in Section~\ref{sec:prop}. This also explains 
the choice of name {\em Fibonacci} for the family.  

More general maps of the form $(x,y)\mapsto(xy+c,x+d)$ where studied by Guedj~\cite{G,G0} who 
proved, among other things, that they have a measure of maximal entropy~$\frac{1+\sqrt{5}}{2}$. The 
family $f_{c}$ was also considered in~\cite{MS}, where 
the associated higher dimensional Julia sets 
were related to odometers. In particular, it was shown that the spectrum of the transfer operator 
associated with a stochastic adding machine in an exotic base (given by Fibonacci
numbers) is related to the set $K^{+}(f_c)$, for a real value of $c$ (a result inspired by \cite{kt}). Also, in~\cite{EMSB}, 
various topological properties of certain slices of the sets $K^{+}(f_{c})$ were discussed. Here
$K^{+}(f_{c})$ is the {\em forward filled Julia set} of $f_{c}$, made of all the points  whose forward orbits are bounded: 
$$K^{+}(f_{c}):= \{ z \in \mathbb{C}^2;\, f_{c}^{n}(z), n\ge 0, \mbox { is bounded}\}.$$

In this paper, we start the study of global topological properties of $K^{+}(f_c)$ and of the {\em backward filled Julia set}
\[
K^{-}(f_c) := \{z \in \mathbb{C}^2;\;  f_{c}^{-n} (z) \textrm{ exists }\forall n\ge 0 \text{ and is bounded}\}.
\]
It is shown that, when $0< |c| < 1/4$, $K^{+}(f_c)$ has
infinite Lebesgue measure and when $c<-2$,  $K^{-}(f_c)$ has positive Lebesgue measure.
In the parameter region $0 < c < \frac{1}{4}$, it is possible to describe in greater detail the real slices 
$K^{+}(f_c) \cap \mathbb{R}^2$ and  $K^{-}(f_c) \cap \mathbb{R}^2$. 
It is shown that these sets are finite unions of invariant manifolds of a finite number of 
periodic points. As a consequence, we obtain that, in this parameter range, $K^{+}(f_c) \cap \mathbb{R}^2$ is a connected subset of $\mathbb{R}^2$ and $K^{-}(f_c) \cap \mathbb{R}^2$ is the union of four smooth curves.

\smallskip
Section 2 describes general topological and measure-theoretic properties of the invariant sets 
$K^\pm(f_c)$, valid for any complex parameter $c \in \mathbb{C}$. It also includes the dynamical description of the 
map $f_{0}$ outlined above.
Section 3 concentrates on the case where the parameter $c$ is real, and the map $f_c$ is seen as a self-map of $\mathbb{R}^2$ and the description of the real slices of $K^{\pm}(f_{c})$ as unions of invariant manifolds is given.

\section{Properties of $K^{+}(f_c)$  and $K^{-}(f_c)$}
\label{sec:prop}

This section establishes the main properties of the invariant sets $K^{+}(f_c)$ and $K^{-}(f_c)$.
For any fixed complex number $c$, let us consider the polynomial endomorphism of $\mathbb{C}^2$ defined by

\begin{equation*}
 \begin{array}{rccc}
   f_c :&  \mathbb{C}^2 &\rightarrow & \mathbb{C}^2    \\
    &(x,y)& \mapsto & (xy+c,x).
 \end{array}
\end{equation*}

Observe that $f_c$ is not one-to-one on the set $\{0\} \times \mathbb{C}$ and not onto on the set $ \mathbb{C}  \times \{0\}$.

Consider the maximum norm $\| (x, y) \| =
\max \{\vert x \vert, \vert y \vert\}$ in $\C^{2}$, and define the following $f_{c}$-invariant sets:
\begin{itemize}
\item $K^{+}(f_c) = \{z \in \mathbb{C}^2,\; \sup _{n \in \mathbb{Z}^{+}} \|  f_c^{n} (z) \| < \infty \}$
\item $K^{-}(f_c) = \{z \in \mathbb{C}^2,\;   f_c^{-n} (z) \textrm{ exists for all } n , \text{ and } \sup _{n \in \mathbb{Z}^{+}} \| f_c^{-n} (z) \| < \infty \},$
\item $K(f_c)= K^{+}(f_c) \cap K^{-}(f_c).$
 \end{itemize}
 
Through the article, when no confusion is possible, these sets will be denoted by $K^+_{c},K^-_{c},K_{c}$ or simply by $K^{+},K^{-},K$ when the $c$-dependence is not important or $c$ has been fixed. Likewise, $f_c$ may be denoted simply by $f$.

The next three properties of invariance of these sets follow at once from the definitions just given:
\begin{enumerate}[a)]
\item $ f^{-1} (K^{+})= K^{+},\;  K^{+} - ( \mathbb{C} \times \{0\}) \subset f(K^{+}) \subset K^{+}.$
\item
$f (K^{-})= K^{-},\;  K^{-}  -( \{0\} \times \mathbb{C})  \subset f^{-1}(K^{-}) \subset K^{-}.$
\item
$f^{-1} (K)= K = f (K).$
\end{enumerate}

As a preliminary study, we concentrate now on the simplest case, where $c=0$, as it is useful to build an intuition of the dynamics of the more general cases.

\paragraph{ The case $c=0$.}
In this particular case, the invariant subsets $K^{+}$ and $K^{-}$ have explicit descriptions (see Figure \ref{Kplusczero}). 

Consider the following maps:
\begin{equation*}
 \begin{array}{rccc}
   h_1 : &  \mathbb{R}_{\ge 0}^2 \times \mathbb{T}^2 &\rightarrow& \mathbb{C}^2    \\
    &(r,s,e^{i\alpha},e^{i \beta})  &\mapsto& (re^{i\alpha},se^{i\beta})
 \end{array}
\end{equation*}

and
\begin{equation*}
 \begin{array}{rccc}
   h_2 : &  \mathbb{C}^2 &\rightarrow & \mathbb{R}_{\ge 0}^2 \\
    &(x,y)  &\mapsto &\left(|x|,|y|\right).
 \end{array}
\end{equation*}

The map $h_2$ semi-conjugates $f_0: \mathbb{C}^2 \to \mathbb{C}^2$ to the map 
$\check{f}\colon \R^{2}_{\ge 0}\to\R^{2}_{\ge 0}$, $\check f(r,s)= (rs,r)$. The map $h_{1}$ 
restricts to a homeomorphism 
of $\R^{2}_{>0}\times\T^{2}$ onto $\C^{2}$ and $f_0$ lifts under $h_1$ to the map 
$\hat{f}=\check{f}\times T_A\colon\R^{2}_{\ge 0}\times\T^{2}\to\R^{2}_{\ge 0}\times\T^{2}$, where 
$T_A$ is the linear Anosov map induced by the matrix $\bigl(\begin{smallmatrix}
1&1 \\ 1&0
\end{smallmatrix} \bigr)$, acting on the torus factor.

Observe that $\check{f}|_{\R^{2}_{>0}}$ is conjugated, by taking logarithms of both coordinates, to the 
linear map of $\mathbb{R}^2$ induced by the same matrix. From this, it follows that $\check{f}$ is 
orientation-reversing and has a fixed point at $(1,1)$, which is a hyperbolic saddle. Its stable manifold is 
the branch of hyperbola $\{(r,s)\in\R^{2}_{>0};\, s=r^{-\beta}\}$, where $\beta$ is the golden mean, 
which divides the first quadrant into a left and right parts. 
All points on the left are attracted to $(0, 0)$, which is also fixed by $\check f$, and all points on the right 
go to infinity.  The unstable manifold of $(1,1)$ is $\{(r,s)\in\R^{2}_{>0};\, s=r^{1/\beta}\}$. 
This takes care of the dynamics in the interior of $\R^{2}_{\ge 0}\cup\{(0,0)\}$. On the boundary, the 
positive horizontal axis maps to the 
positive vertical axis which is all sent to the fixed point~$(0,0)$. 

From this analysis, it follows that the only non-wandering dynamics of the product map $\hat f$ 
occur on the fibers over the two fixed points $(0,0),(1,1)$, where the dynamics is the toral 
automorphism $T_{A}$ (as it is on all other fibers, which, however, escape to infinity or are attracted to the fiber over $(0,0)$). Via the semi-conjugacy $h_{1}$, this explains the dynamics of $f_{0}$. The following proposition summarizes the information just discussed.

\begin{prop}
\label{co}
Let $c=0$ and denote by $\beta$ the golden ratio $\beta= \frac{1+ \sqrt{5}}{2}$. The following hold: 
\begin{enumerate}[a)]
\item
$K^{+}_{0}= \left\{(x, y) \in \mathbb{C}^2,\; \vert  y \vert \leq \vert x\vert^{-\beta} \; \right\}$,
\item
$K^{-}_{0}= \left\{(x, y) \in \mathbb{C}^2 \setminus \{(0,0)\},\; \vert y \vert = \vert x\vert^{\frac{1}{\beta}} \; \right\}.$
\end{enumerate}
\end{prop}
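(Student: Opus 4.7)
The plan is to push the analysis down to the modulus map $\check f(r,s) = (rs, r)$ on $\R^2_{\ge 0}$ via the semi-conjugacy $h_2(x,y) = (|x|, |y|)$; the max norm on $\C^2$ factors through $h_2$, so the forward (resp.\ backward) orbit of $(x,y)$ under $f_0$ is bounded exactly when the corresponding orbit of $(|x|, |y|)$ under $\check f$ is bounded, with suitable care concerning preimages in the backward case. On $\R^2_{>0}$ the further substitution $(u,v) = (\log r, \log s)$ conjugates $\check f$ to the linear map with matrix $A = \left(\begin{smallmatrix} 1 & 1 \\ 1 & 0 \end{smallmatrix}\right)$, whose eigenvalues are $\beta$ and $-1/\beta$, with respective eigenvectors $e_+ = (1, 1/\beta)$ and $e_- = (1, -\beta)$. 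Writing $(u,v) = a\, e_+ + b\, e_-$, a direct computation gives $\sqrt{5}\, a = \beta u + v$, so the locus $\{a \le 0\}$ is exactly $\{|y| \le |x|^{-\beta}\}$ and the locus $\{b = 0\}$ is exactly $\{|y| = |x|^{1/\beta}\}$.

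For part (a), on the open positive quadrant I look at $A^n(u,v) = a\beta^n e_+ + b(-1/\beta)^n e_-$. When $a > 0$ both coordinates tend to $+\infty$ and the $\check f$-orbit escapes; when $a < 0$ both coordinates tend to $-\infty$, so $(|x_n|, |y_n|) \to (0,0)$; when $a = 0$ the orbit converges along $e_-$ to the fixed point $(1,1)$. Hence the forward $\check f$-orbit is bounded iff $a \le 0$, which reads $|y| \le |x|^{-\beta}$. On the axes the dynamics of $f_0$ is trivial: $(x, 0) \mapsto (0, x) \mapsto (0,0)$ and $(0, y) \mapsto (0, 0)$, so every point of the coordinate axes lies in $K^+_0$, matching the inequality $|y| \le |x|^{-\beta}$ under the convention $|0|^{-\beta} = +\infty$.

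For part (b), I first eliminate the axis cases. Any $(x, 0)$ with $x \ne 0$ has no $f_0$-preimage; any $(0, y)$ with $y \ne 0$ admits the single preimage $(y, 0)$, which in turn has none; and $(0, 0)$ has the unbounded preimage fiber $\{0\} \times \C$. So $K^-_0 \subset \C^{*} \times \C^{*}$. On $\C^{*} \times \C^{*}$ the map $f_0$ is injective, with $f_0^{-1}(x,y) = (y, x/y)$, so the unique backward orbit reads in log coordinates as $A^{-n}(u,v) = a\beta^{-n} e_+ + b(-\beta)^n e_-$. Requiring $(|x_{-n}|, |y_{-n}|)$ to stay bounded, i.e., $(u_{-n}, v_{-n})$ bounded above, forces $b = 0$, equivalently $|y| = |x|^{1/\beta}$.

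The main technical subtlety throughout is the bookkeeping around the coordinate axes, where the logarithmic chart breaks down and where $f_0$ fails to be locally injective or locally surjective; once these cases are dispatched, the rest is the standard hyperbolic linear analysis of $A$ together with the eigenvector computation performed in the first paragraph.
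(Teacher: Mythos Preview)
Your proof is correct and follows precisely the conceptual route the paper lays out in the discussion \emph{preceding} the proposition: the semi-conjugacy $h_2$ to $\check f(r,s)=(rs,r)$, the logarithmic conjugacy to the linear map with matrix $A=\bigl(\begin{smallmatrix}1&1\\1&0\end{smallmatrix}\bigr)$, and the hyperbolic analysis via the eigendecomposition of $A$. The paper's formal proof instead writes down the explicit monomial formula $f_0^n(x,y)=(x^{F_n}y^{F_{n-1}},x^{F_{n-1}}y^{F_{n-2}})$ (and the analogous formula for $f_0^{-n}$) and appeals to $F_n=O(\beta^n)$; since the entries of $A^n$ are exactly these Fibonacci numbers, the two arguments are the same computation in different clothing. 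Your version has the advantage of being more transparent about \emph{why} the critical curves are $|y|=|x|^{-\beta}$ and $|y|=|x|^{1/\beta}$ (they are the images of the eigenlines), and you are more careful than the paper about the bookkeeping on the coordinate axes, in particular the exclusion of $(0,0)$ from $K^-_0$.
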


\begin{proof}
Let $(F_n)_{n \geq 0}$ be  the {\it Fibonacci sequence} defined by
\[
F_0=F_1=1 \textrm{ and }F_{n+2}=F_{n+1}+ F_{n},\; \forall n \in  \mathbb{Z}^+.
\] 
By induction, one shows easily that iterates of $f_0$ are {\it monomial maps}:
\[
 f^{n}_0(x,y)=\left(x^{F_n} y^{F_{n-1}},x^{F_{n-1}} y^{F_{n-2}}\right) \;  \textrm{ for all } n \geq 1,
\]
and
$$
f^{-n}(x,y)=
 \left\{
\begin{array}{cl}
 (x^{F_{n-1}} / y^{F_{n}}, y^{F_{n+1}} / x^{F_{n}}) & \mbox { if } n \mbox { is odd },\\
 (y^{F_{n-1}} / x^{F_{n-2}}, x^{F_{n-1}} / y^{F_{n}}) & \mbox { if } n \mbox { is even }.
 \end{array}
\right.
$$
Since $F_n = O( \beta^{n})$, both statements follow.
\end{proof}

\begin{figure}[h]
\begin{center}
\includegraphics[scale=0.6]{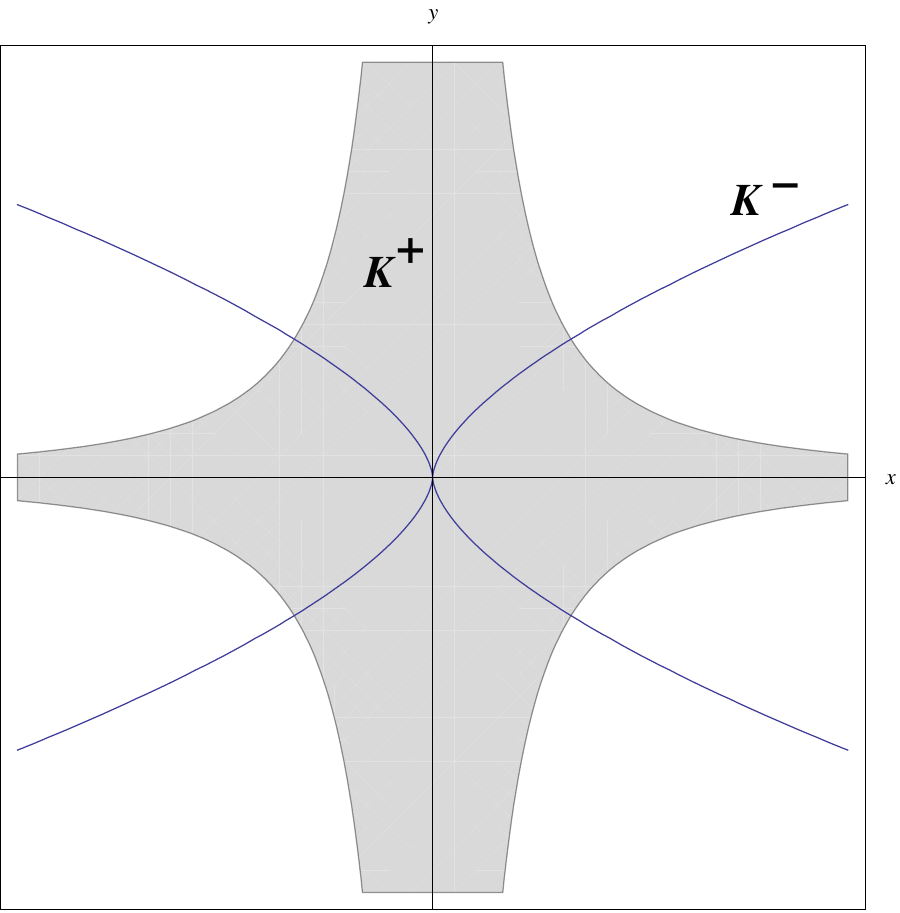}
\caption{The case $c=0$}
\label{Kplusczero}
\end{center}
\end{figure}

\begin{figure}[!ht]
\centering%
\begin{minipage}{0.45\textwidth}
  \includegraphics[width=\linewidth]{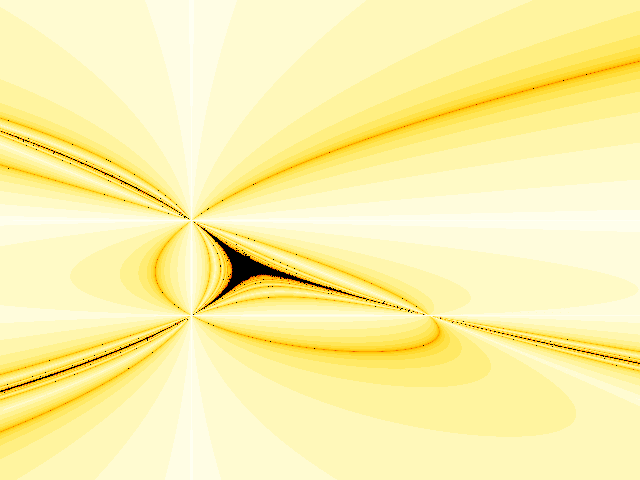}
\end{minipage}%
\begin{minipage}{0.45\textwidth}\centering
  \includegraphics[width=\linewidth]{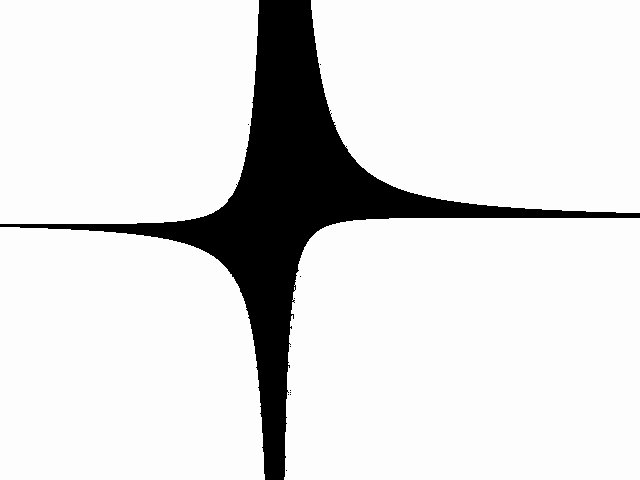}
\end{minipage}%
\caption{ $K^-$ for $c=-3$ (left) \qquad and $K^+$ for $c=-0.6$ (right)}
\end{figure}

\paragraph{Basic properties of the set $K^+(f_c)$.}
We focus now on the filled Julia set  $K^+(f_c)$ and its complement $\mathbb{C}^2 \setminus K^+(f_c)$ called the {\it escaping set}. We show that points that escape to infinity must necessarily escape through a certain set $V_R$, where
\[
V_R= \left\{(x,y) \in \mathbb{C}^2,\; \min \{\vert x \vert, \vert y \vert\} >R\right\}  \]
and the number $R\ge 0$ is chosen appropriately. More precisely, the following will be shown:
\begin{prop}
\label{prop:Kmais0}
Let $R_0=\max\left\{2,\sqrt{2|c|}\right\}$. Then for any $R>R_0$,  $f^{-n}(V_R) \subset f^{-n-1}(V_R)$, 
for every $n \geq 0$, and 
\[
\mathbb{C}^2 \setminus K^{+}=  \bigcup_{n=0}^{+\infty} f^{-n} (V_R).
\]

\end{prop}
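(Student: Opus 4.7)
The plan is to establish the nesting by showing $V_R \subset f^{-1}(V_R)$ (so that applying $f^{-n}$ yields $f^{-n}(V_R) \subset f^{-n-1}(V_R)$), and then to obtain the equality with $\mathbb{C}^2 \setminus K^+$ by two opposite inclusions.

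For the forward invariance of $V_R$, if $(x,y) \in V_R$ then $f(x,y) = (xy+c,x)$ has second coordinate of modulus $|x|>R$ and first coordinate of modulus at least $|x||y|-|c| > R^2-|c|$; the hypothesis $R>R_0$ yields $R>2$ and $R^2 > 2|c|$, so $R^2-R > R^2/2 > |c|$, hence $R^2-|c|>R$. This gives $f(V_R)\subset V_R$ and the stated nesting. The inclusion $\bigcup_{n\ge 0} f^{-n}(V_R) \subset \mathbb{C}^2 \setminus K^+$ I will deduce by checking that orbits starting in $V_R$ actually escape: once inside $V_R$, $|x_{n+1}| \ge |x_n||y_n|-|c| \ge \tfrac{1}{2}|x_n||y_n| > (R/2)|x_n|$, so (since $R>2$) one has $|x_n| \ge R(R/2)^n \to\infty$ and thus $\|f^n(x,y)\|\to\infty$.

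The hard part will be the reverse inclusion: every $z \notin K^+$ has some $f^n(z) \in V_R$. My strategy is a proof by contradiction. Writing $(x_n,y_n) = f^n(z)$, the relation $y_{n+1} = x_n$ shows that unboundedness of $\|(x_n,y_n)\|$ forces $(|x_n|)_{n\ge 0}$ to be unbounded, and the assumption that no iterate lies in $V_R$ is exactly the statement that no two consecutive terms of $(|x_n|)$ exceed $R$. Setting $S = \{n:|x_n|>R\}$ (which therefore contains no two consecutive integers), I plan to show $\sup_{n\in S}|x_n| < \infty$, which together with $|x_n|\le R$ for $n \notin S$ contradicts the unboundedness of $(|x_n|)$.

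The key estimate couples neighboring ``small'' constraints. If $n,\,n+2\in S$ then $n+1, n+3 \notin S$; using $|x_{n+3}|\le R$ in $x_{n+3} = x_{n+2}x_{n+1}+c$ gives $|x_{n+1}|\le (R+|c|)/|x_{n+2}|$, and substituting this into $x_{n+2}=x_{n+1}x_n+c$ produces
\[
|x_{n+2}|^2 - |c|\,|x_{n+2}| - (R+|c|)\,|x_n| \le 0,
\]
so $|x_{n+2}|\lesssim \sqrt{(R+|c|)|x_n|}$, which pulls large values downward toward a level of order $R+|c|$. If instead the next element of $S$ after $n$ lies at distance $\ge 3$, the two preceding terms are both $\le R$ so that large value is directly bounded by $R^2+|c|$. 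In either case $(|x_n|)_{n\in S}$ is uniformly bounded, yielding the desired contradiction and completing the proof.
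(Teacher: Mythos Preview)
Your argument is correct and, for the hard direction $\mathbb{C}^2\setminus K^+\subset\bigcup_n f^{-n}(V_R)$, genuinely different from the paper's. The paper introduces an auxiliary threshold $R^a\gg R$ (with $a$ large enough that $R^a-|c|>R+|c|$ and $\|z\|<R^{a/2-1}$), takes $n_0$ minimal with $|p_{n_0}(z)|\ge R^a$, and then uses only the two constraints $f^{n_0}(z),f^{n_0+1}(z)\notin V_R$ to force $|p_{n_0-1}|\le (R+|c|)/R^a$ and hence $|p_{n_0-2}|\ge R^a(R^a-|c|)/(R+|c|)\ge R^a$, contradicting minimality of $n_0$. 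You instead analyse the entire subsequence $S=\{n:|x_n|>R\}$: a gap $\ge 3$ in $S$ resets the next value to at most $R^2+|c|$, while a gap of $2$ gives the square-root recursion $|x_{n+2}|\le\phi(|x_n|)$ with $\phi(t)=\tfrac12\bigl(|c|+\sqrt{|c|^2+4(R+|c|)t}\bigr)$. Both proofs rest on the same local fact (a large $|x_m|$ flanked by a small $|x_{m+1}|$ forces $|x_{m-1}|$ to be tiny), but the paper packages it as a single first-passage-time contradiction while you turn it into a uniform bound on $(|x_n|)_{n\in S}$; the paper's route is shorter, yours gives explicit control of the whole orbit.

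One sentence in your write-up is doing silent work and should be expanded: to pass from ``pulls large values downward / directly bounded by $R^2+|c|$'' to ``$(|x_n|)_{n\in S}$ is uniformly bounded'', note that $\phi$ is increasing with a unique positive fixed point $t^*$ and $\phi(t)\le t$ for $t\ge t^*$, so setting $B=\max\{|x_{n_1}|,\,R^2+|c|,\,t^*\}$ and inducting along the elements of $S$ gives $|x_{n_k}|\le B$ for every $k$ (the initial value $|x_{n_1}|$ being finite either because $n_1\le 1$ or because the two preceding terms are $\le R$).
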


\begin{figure}[h]
\begin{center}
\includegraphics[scale=0.35]{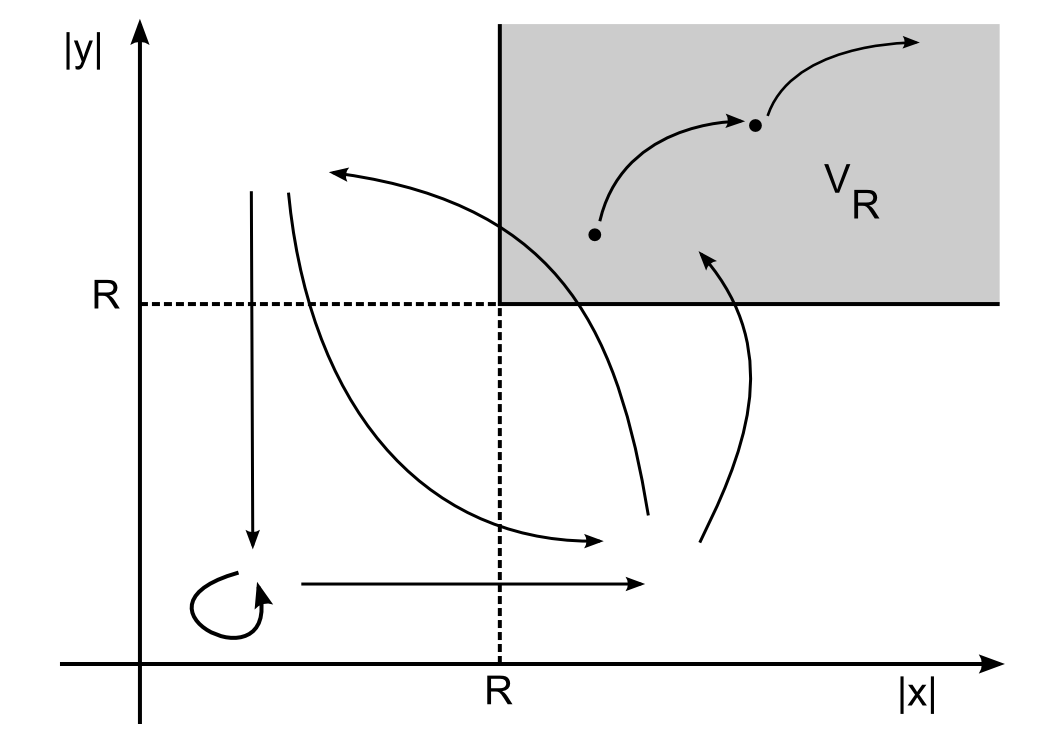}
\caption{Forward escaping points must escape through $V_R$ (Prop. \ref{prop:Kmais0})}
\end{center}
\end{figure}

\begin{obs}
By induction one can show the existence of a sequence of polynomial functions $p_n: \mathbb{C}^2 \to \mathbb{C}$ such that the iterates $f^n_c$, for $n \geq 0$, are given by the formula
\[
f^{n}_{c}(z)= (p_n (z), p_{n-1}(z)), \textrm{ for all } z \in \mathbb{C}^2.
\]
Although the $p_{n}$ also depend on the parameter $c$, we omit it in order not to clutter the notation unnecessarily. 

\end{obs}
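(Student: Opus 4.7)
The proof is a direct induction on $n$, and the plan is simply to unwind the recursion $f^{n+1}_c = f_c \circ f^n_c$ at the level of the two coordinate functions, checking that at each step the result stays inside the polynomial ring $\mathbb{C}[x,y]$.

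For the base cases, set $p_0(x,y) := x$ and, formally, $p_{-1}(x,y) := y$, so that $f^0_c(x,y) = (x,y) = (p_0(x,y), p_{-1}(x,y))$; the case $n = 1$ is then immediate from the very definition of $f_c$, namely $f_c(x,y) = (xy+c,\, x) = (p_1(x,y),\, p_0(x,y))$ with $p_1(x,y) := xy + c \in \mathbb{C}[x,y]$.

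For the inductive step, assume polynomials $p_{n-1}, p_n \in \mathbb{C}[x,y]$ have already been produced so that $f^n_c(z) = (p_n(z), p_{n-1}(z))$. Then
\[
f^{n+1}_c(z) \;=\; f_c\bigl(p_n(z),\, p_{n-1}(z)\bigr) \;=\; \bigl(p_n(z)\,p_{n-1}(z) + c,\; p_n(z)\bigr),
\]
so defining $p_{n+1} := p_n\,p_{n-1} + c$ yields another element of $\mathbb{C}[x,y]$ (since $\mathbb{C}[x,y]$ is a ring) and gives exactly the asserted shape at level $n+1$. This closes the induction.

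The argument is entirely routine, and I do not anticipate any real obstacle beyond the bookkeeping of the indexing and the formal choice of $p_{-1}$. As a by-product, the recurrence $p_{n+1} = p_n p_{n-1} + c$ gives $\deg p_{n+1} = \deg p_n + \deg p_{n-1}$, so the degrees of the $p_n$ grow like the Fibonacci numbers $F_n$, which matches the explicit monomial computation carried out for $c = 0$ in Proposition~\ref{co} and is consistent with the dynamical degree $\beta = \frac{1+\sqrt{5}}{2}$ of the family.
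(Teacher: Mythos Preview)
Your proof is correct and is exactly the argument the paper has in mind: the remark is stated without a detailed proof, simply asserting ``by induction'', and the recurrence $p_{n+1} = p_n p_{n-1} + c$ you derive is precisely the one the paper uses immediately afterwards in Lemma~\ref{R0}. Your additional observation about the degrees of the $p_n$ following the Fibonacci recursion is a nice bonus and fits the paper's theme.
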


The next lemma will be used to prove Proposition \ref{prop:Kmais0}:
\begin{lem}
\label{R0}
For all $R>R_0=\max\left\lbrace2,\sqrt{2|c|}\right\rbrace$, if $\min \left\{\vert p_k(z) \vert,\vert p_{k+1}(z) \vert\right\} > R$ for some integer $k$, then the sequence $(p_ {n}(z))_{n \geq 0}$ is unbounded.
\end{lem}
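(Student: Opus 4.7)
The plan is to extract a quadratic recursion from the definition of $f_c$ and then show by induction that once two consecutive terms of $(p_n(z))_{n\ge 0}$ exceed $R$, the sequence grows at least geometrically and hence cannot be bounded. From
\[
f_c^{n+1}(z)=f_c\bigl(p_n(z),p_{n-1}(z)\bigr)=\bigl(p_n(z)\,p_{n-1}(z)+c,\,p_n(z)\bigr)
\]
together with the identification $f_c^{n+1}(z)=(p_{n+1}(z),p_n(z))$, I read off the recursion $p_{n+1}(z)=p_n(z)\,p_{n-1}(z)+c$.

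The heart of the argument is a single estimate: whenever $|p_{n-1}(z)|>R$ and $|p_n(z)|>R$, one has $|p_{n+1}(z)|>(R/2)\,|p_n(z)|$. Indeed, the reverse triangle inequality applied to the recursion yields
\[
|p_{n+1}(z)|\ge|p_n(z)|\bigl(|p_{n-1}(z)|-|c|/|p_n(z)|\bigr)>|p_n(z)|\bigl(R-|c|/R\bigr),
\]
and the hypothesis $R>\sqrt{2|c|}$ is precisely the statement that $|c|/R<R/2$, which produces the factor $R/2$. As a bonus, this also gives $|p_{n+1}(z)|>(R/2)R=R^2/2>R$, where the last inequality uses $R>2$.

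Starting from $|p_k(z)|,|p_{k+1}(z)|>R$, the inductive invariant ``$|p_n(z)|>R$ and $|p_{n+1}(z)|>R$'' propagates from $n$ to $n+1$ via the estimate just proved, so $|p_n(z)|>R$ and $|p_{n+1}(z)|>(R/2)|p_n(z)|$ hold for every $n\ge k$. Since $R>2$ makes the ratio $R/2$ strictly greater than one, iterating gives $|p_{k+1+m}(z)|\ge(R/2)^m|p_{k+1}(z)|\to\infty$, as required. The main subtlety is that, because the recursion is second order, the inductive hypothesis must track two consecutive terms at once; the two clauses of $R_0=\max\{2,\sqrt{2|c|}\}$ then play distinct roles, with $R>\sqrt{2|c|}$ absorbing the additive constant $c$ into the product and $R>2$ supplying the geometric growth ratio strictly greater than one.
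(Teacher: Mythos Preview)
Your proof is correct and follows essentially the same approach as the paper: extract the second-order recursion $p_{n+1}=p_np_{n-1}+c$, apply the triangle inequality, and use the two clauses in $R_0$ to propagate a lower bound by induction. The only difference is quantitative: the paper pushes the induction a bit harder to obtain the Fibonacci-type estimate $|p_{k+n}(z)|>R^{F_n}/2^{F_n-1}$, whereas you stop at the simpler geometric bound $|p_{k+1+m}(z)|\ge (R/2)^m|p_{k+1}(z)|$. For the lemma as stated your bound is entirely sufficient (and cleaner); the paper's sharper estimate is recorded because it is invoked later in the proofs of Propositions~\ref{prop:Kmais0} and~\ref{prop:Kmais2}.
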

\begin{proof}

 Let   $R >R_0$ such that $\min \left\{\vert p_k(z) \vert,\vert p_{k+1}(z) \vert\right\} > R$ for 
some integer $k$.
Since $p_{n+1} (z)= p_n (z) p_{n-1}(z)+c$ for all $n \geq 1$, we deduce by the triangle inequality 
that  $\vert p_{k+2}(z)\vert >  R^2 - \vert c \vert > \frac{R^2}{2}$.
Hence 
\begin{equation*}
\vert p_{k+3}(z)\vert >  \frac{R^3}{2}- \vert c \vert > \frac{R}{2} \left(R^2 - \vert c \vert\right) > \frac{R^3}{2^2} .
\end{equation*}
By induction, we then deduce that, for all $n\ge 3$,
\begin{equation*}
\label{fnn}
 \vert p_{k+n}(z)\vert > \dfrac{R^{F_n}}{2^{F_n-1}},
 \end{equation*}
where $F_{n}$ is, as before, the $n$-th Fibonacci number.
 \end{proof}

\begin{proof}[Proof of Proposition \ref{prop:Kmais0}]

Fix $R > R_0$.   It follows from Lemma~\ref{R0} that $\bigcup_{n=0}^{+\infty} f^{-n} (V_R) \subset \mathbb{C}^2 \setminus K^{+}$. Suppose, for a contradiction, that there exists 
$ z= (x, y) \in \mathbb{C}^2 \setminus K^{+}$ 
such that $z \not \in \bigcup_{n=0}^{+\infty} f^{-n} (V_R)$.
Let us consider a large number $a >4$ satisfying $\frac{R^a - \vert c \vert }{R+ \vert c \vert } >1$ and
\begin{equation*}
\label{ccc}
R+ \vert c \vert < R^{a-2} < R^a - \vert c \vert \mbox { and } \| z \| < R^{\frac{a}{2} -1} .
 \end{equation*}
From this one can deduce that $\vert xy+c \vert \leq R^{a-2}+ \vert c \vert < R^a$, thus $\| f(z) \|< R^ a.$
 Let  $n_0 \in \mathbb{N}$ minimal such that
\begin{equation*}
\label{aa}
  R^{a} \leq \| f^{n_0}(z) \|= \| ( p_{n_0}(z),p_{n_{0}-1}(z))  \|.
 \end{equation*}
 Note that $n_0 \geq 2$. On the other hand, we have
 $\| f^{n_{0}}(z) \|=
  \vert p_{n_{0}}(z) \vert $, otherwise
\[ \| f^{n_0 -1}(z) \| \geq \vert p_{n_0 -1}(z) \vert \geq   R^a,\]
which would contradict the fact that $z \not \in \bigcup_{n=0}^{+\infty} f^{-n} (V_R)$.
 Now, since 
\[
R < R^a \leq \vert p_{n_0 }(z) \vert \textrm{ and }  f^{n_0- 1}(z) \not \in V_R,
\]
 we deduce that
$\vert p_{n_{0}+1}(z) \vert \leq R$. Thus by the triangle  inequality, it follows that
$$\vert p_{n_{0}-1}(z) \vert \leq \frac{ R + \vert c \vert}{\vert p_{n_0}(z) \vert}  \leq \frac{ R + \vert c \vert}{R^{a}},$$
and also that
 $$\| f^{n_{0}-2}(z) \| \geq \vert p_{n_{0}-2}(z) \vert \geq \frac{ R^a - \vert c \vert}{\vert p_{n_{0} -1}(z) \vert}  > \frac{R^a ( R^a - \vert c \vert)} {R + \vert c \vert}  \geq R^a.$$
 This last inequality contradicts the minimality of $n_0$.
\end{proof}

Under iteration of a polynomial in $\mathbb{C}$, the set of points with unbounded orbits coincides with the {\it basin of attraction} of infinity in the Riemann sphere $\hat{\mathbb{C}}=\mathbb{C} \cup \{\infty  \}$.
The next corollary says that the same happens in our situation, namely that points with unbounded orbits actually diverge to infinity. 
\begin{cor}
The following properties hold
\begin{enumerate}[a)]
\item
$\mathbb{C}^2 \setminus K^{+}= \{ z \in \mathbb{C}^2,\; \lim \| f^{n}(z) \|= +\infty\}$.
\item
$K^{+}$ is a closed subset of $\mathbb{C}^2$.
\end{enumerate}
\end{cor}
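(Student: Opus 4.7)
The plan is to deduce both parts directly from Proposition~\ref{prop:Kmais0} and Lemma~\ref{R0}, with essentially no new work required.

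For part (a), fix any $R>R_0$. By Proposition~\ref{prop:Kmais0}, $z\notin K^{+}$ if and only if there exists $n_{0}\ge 0$ with $f^{n_{0}}(z)\in V_{R}$, i.e.\ $\min\{|p_{n_{0}}(z)|,|p_{n_{0}-1}(z)|\}>R$. I would then invoke Lemma~\ref{R0} with $k=n_{0}-1$ to obtain the explicit lower bound
\[
|p_{n_{0}-1+n}(z)|\;>\;\frac{R^{F_{n}}}{2^{F_{n}-1}}\;=\;2\Bigl(\tfrac{R}{2}\Bigr)^{F_{n}}
\]
for all $n\ge 3$. Since $R>R_{0}\ge 2$ gives $R/2>1$ and $F_{n}\to\infty$, this quantity tends to $+\infty$. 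Because $\|f^{m}(z)\|=\max\{|p_{m}(z)|,|p_{m-1}(z)|\}\ge |p_{m}(z)|$, we conclude $\|f^{m}(z)\|\to +\infty$. The reverse inclusion is immediate: if $\lim\|f^{n}(z)\|=+\infty$, then the forward orbit is unbounded, so $z\notin K^{+}$.

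For part (b), I would again use Proposition~\ref{prop:Kmais0} to write
\[
K^{+}\;=\;\mathbb{C}^{2}\setminus\bigcup_{n=0}^{+\infty}f^{-n}(V_{R}).
\]
The set $V_{R}$ is open in $\mathbb{C}^{2}$ (it is defined by strict inequalities of continuous functions), and $f$ is a polynomial, hence continuous, so each $f^{-n}(V_{R})$ is open. A union of open sets is open, and its complement $K^{+}$ is therefore closed.

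Neither step has a real obstacle: the only thing to double check carefully is the index shift between ``$f^{n_{0}}(z)\in V_{R}$'' and the hypothesis ``$\min\{|p_{k}(z)|,|p_{k+1}(z)|\}>R$'' of Lemma~\ref{R0}, which I handle by setting $k=n_{0}-1$. Everything else is just bookkeeping on the two already-proved statements.
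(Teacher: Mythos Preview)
Your proof is correct and follows essentially the same approach as the paper: part (a) is deduced from Proposition~\ref{prop:Kmais0} together with the explicit growth estimate established in the proof of Lemma~\ref{R0}, and part (b) from writing $K^{+}$ as the complement of the open union $\bigcup_{n}f^{-n}(V_{R})$. The only minor remark is that Lemma~\ref{R0} as \emph{stated} only asserts unboundedness of $(p_{n}(z))$; the inequality $|p_{k+n}(z)|>R^{F_{n}}/2^{F_{n}-1}$ you quote (which indeed gives divergence to $+\infty$) appears in its proof, so you are really invoking that proof rather than the lemma's statement --- a harmless point, and your index check $k=n_{0}-1$ is handled correctly.
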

\begin{proof} a) is an immediate consequence of Lemma~\ref{R0} and b) comes from 
Proposition~\ref{prop:Kmais0}.\end{proof}

\begin{figure}[!ht]
\centering%
\begin{minipage}{0.45\textwidth}
  \includegraphics[width=\linewidth]{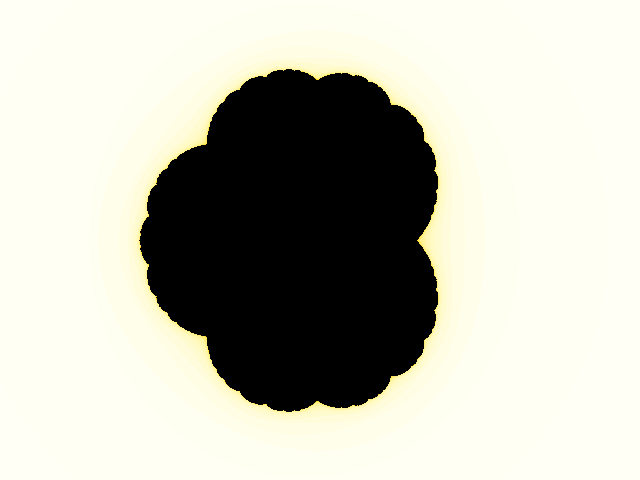}
\end{minipage}%
\begin{minipage}{0.45\textwidth}\centering
  \includegraphics[width=\linewidth]{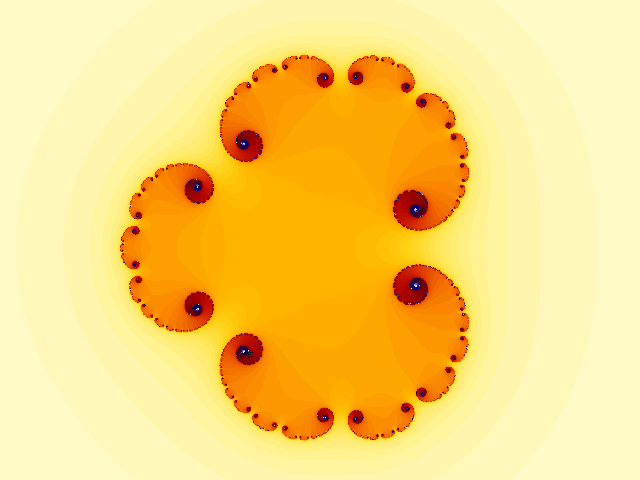}
\end{minipage}%
\caption{Examples of horizontal slices $K^+\cap \{y=0.33\}$ for $c=0.2$ and $c=0.33$}
\label{fig:slice}

\end{figure}

Although the examples given in the figure \ref{fig:slice} show horizontal slices of the set $K^+$ that are bounded, the entire set $K^+$ itself is never bounded. The following proposition is a consequence 
of the results developed in~\cite{EMSB}:

\begin{prop}
\label{unboun}
For all $c \in \mathbb{C}$, $K^{+}(f_c)$ is an unbounded  subset of $\mathbb{C}^2$.
\end{prop}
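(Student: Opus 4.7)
The plan is to exploit the degeneracy that $f_c$ collapses the vertical line $L_0 := \{x = 0\}$ onto the single point $(c, 0)$. Combined with the backward-invariance $f^{-1}(K^+) = K^+$ (property (a) at the start of this section), this gives the easy case immediately: whenever $(c, 0) \in K^+(f_c)$, one has $L_0 = f^{-1}(\{(c, 0)\}) \subset K^+$, and since $L_0$ is unbounded, so is $K^+$.

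For general $c$---and in particular for parameters for which $(c, 0) \notin K^+$---the direct reasoning above breaks down and a more involved argument is required. My next step would be to show that the horizontal slices $S(y_0) := K^+(f_c) \cap (\mathbb{C} \times \{y_0\})$ are nonempty for $y_0$ of arbitrarily large modulus; any resulting sequence $(x_n, y_n) \in K^+$ with $|y_n| \to \infty$ immediately witnesses unboundedness. Following the strategy of~\cite{EMSB}, for a fixed $y_0$ of large modulus I would consider the nested compact sets
\[
E_n(y_0) = \{\, x \in \mathbb{C} : \| f_c^k(x, y_0) \| \le R \text{ for } 0 \le k \le n \,\},
\]
with $R$ the constant from Proposition~\ref{prop:Kmais0}, and then prove that $E_n(y_0) \neq \emptyset$ for every $n$; nested compactness would then yield $S(y_0) = \bigcap_n E_n(y_0) \neq \emptyset$.

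The main obstacle will be showing $E_n(y_0) \neq \emptyset$ for all $n$ in the range of parameters where $|y_0|$ is very large and the critical point $(c, 0)$ escapes to infinity. Heuristically, a point $(x, y_0)$ with $|x|$ of order $|c/y_0|$ has $f_c$-image very close to $(0, x)$, so its orbit initially shadows that of $(c, 0)$; one must therefore select $x$ carefully so that the accumulation of higher-order corrections keeps the full orbit bounded. This selection is where the detailed preimage structure and the Fibonacci combinatorics developed in~\cite{EMSB} come in, and it is the step where I expect all the real work to lie.
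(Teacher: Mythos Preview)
Your overall strategy --- show that horizontal slices $K^+ \cap (\mathbb{C} \times \{a\})$ are nonempty and conclude unboundedness --- is exactly the paper's, but your execution is overcomplicated and your definition of $E_n$ contains a genuine bug.

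The bug first: with $E_n(y_0) = \{x : \|f^k(x,y_0)\| \le R,\ 0 \le k \le n\}$ and $|y_0|$ chosen large, already the $k=0$ condition forces $|y_0| \le R$, so $E_0(y_0) = \emptyset$ and the nested intersection is vacuous from the start. The correct objects are the one-variable sublevel sets of the polynomials $p_n$ introduced in the remark before Lemma~\ref{R0}: for fixed $a$, view $x \mapsto p_n(x,a)$ as a polynomial in $x$ and set $E_n = p_n^{-1}\bigl(\overline{D(0,R)}\bigr)$.

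Second, the paper does not re-derive what you call the ``main obstacle''; it simply invokes~\cite{EMSB}, which already proves that for every $a \in \mathbb{C}\setminus\{0\}$ there exists $R>0$ with
\[
\{x \in \mathbb{C} : (x,a) \in K^+\} \;=\; \bigcap_{n \ge 0} p_n^{-1}\bigl(\overline{D(0,R)}\bigr),
\]
the sequence of compact sets on the right being \emph{decreasing}. Since for $a \neq 0$ each $p_n(\cdot,a)$ is a nonconstant polynomial in $x$ (its $x$-degree is the Fibonacci number $F_n$), each $p_n^{-1}\bigl(\overline{D(0,R)}\bigr)$ is automatically nonempty, and Cantor's nested-compact-set argument gives a nonempty slice. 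Thus $K^+$ meets every line $\mathbb{C}\times\{a\}$ with $a\neq 0$, which is far more than needed for unboundedness.

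In particular, your case split on whether $(c,0)\in K^+$, the restriction to large $|y_0|$, and the heuristic orbit-shadowing discussion are all unnecessary: the work you anticipate is precisely the nestedness statement already established in~\cite{EMSB}, plus the one-line nonconstancy observation above.
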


\begin{proof}
Let $a \in \mathbb{C} \setminus \{0\}$.
In \cite{EMSB}, it is proved that there exists $R >0$ such that the set
$$\{x \in \mathbb{C},\; (x, a) \in K^{+}\}= \bigcap _{n=0}^{+\infty}p_{n}^{-1}\left(\overline{D(0,R)}\right),$$
where the  sequence of compact sets  $p_{n}^{-1} \left(\overline{D(0,R)}\right)$ is decreasing.
Hence $K^{+}$ intersects every complex horizontal line $\mathbb{C} \times \{a\},\; a \ne 0$.
\end{proof}

\paragraph{Basic properties of the set $K^-(f_c)$.}
We perform here a similar study for the set $K^-(f_c)$ with some extra caution, since $f^{-1}_c$ is not globally defined.

For any real $R >0$ define
$$F_R= \left \{(x,y) \in \mathbb{C}^2,\;  0 <\min \{\vert x \vert, \vert y \vert\} < \frac{1}{R} \mbox { and } \| (x, y) \| >R \right \},$$
and
$$G_R= (\mathbb {C} \times \{0 \}) \cup F_R.$$
Observe that
$$ \bigcup_{n=0}^{+\infty} f^{n} (\mathbb {C} \times \{0 \}) \subset \mathbb{C}^2 \setminus K^{-}.$$

The following proposition is the analogue of Proposition \ref{prop:Kmais0} for $K^-$. More specifically, 
we show the existence of a {\it trapping region} for the backward dynamics: points with escaping backwards orbits must escape through the set $F_R$ where they stay trapped.
\begin{prop}
\label{prop:Kmais2}
There exists a  real number $R_1 >0$ such that for all $R > R_1$, we have
$\mathbb{C}^2 \setminus K^{-}=  \bigcup_{n=0}^{+\infty} f^{n} (G_R)$, and $f^{n} (G_R) \subset f^{n+1} (G_R)$ for all $n \geq 0$.
\end{prop}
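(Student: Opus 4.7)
The plan is to parallel the proof of Proposition~\ref{prop:Kmais0}, but with forward and backward dynamics interchanged: identify $F_R$ as a trapping region for the backward dynamics, establish norm divergence inside it, and use this to cover every point with an unbounded (or non-extendable) backward orbit.

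The first step is to show that, for $R$ sufficiently large (in particular $R>R_1$, to be chosen depending on $|c|$), one has $f^{-1}(F_R)\subset F_R$. Any $(x,y)\in F_R$ has $y\neq 0$ and admits the unique preimage $f^{-1}(x,y)=(y,(x-c)/y)$; if for instance $|x|<1/R$ and $|y|>R$, then the preimage has first coordinate of modulus $|y|>R$ and second coordinate bounded by $(1/R+|c|)/R$, which falls below $1/R$ once $R$ is large enough (the symmetric case $|x|>R$, $|y|<1/R$ is identical). This yields $F_R\subset f(F_R)$ — every $z\in F_R$ is the image of its preimage, which is again in $F_R$ — and, by induction, $f^n(F_R)\subset f^{n+1}(F_R)$ for every $n\geq 0$, giving the nesting asserted by the proposition.

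For the coverage $\mathbb{C}^2\setminus K^-\subset\bigcup f^n(G_R)$, I would split $z\in\mathbb{C}^2\setminus K^-$ into two cases. If $f^{-k}(z)$ fails to exist for some minimal $k\geq 1$, then $f^{-(k-1)}(z)\in\mathbb{C}\times\{0\}$ (the only obstruction to taking preimages), whence $z\in f^{k-1}(\mathbb{C}\times\{0\})\subset f^{k-1}(G_R)$. Otherwise the orbit $z_n:=f^{-n}(z)=(x_n,y_n)$ exists for all $n$ and $\sup_n\|z_n\|=\infty$. The relations $x_{n+1}=y_n$ and $y_{n+1}=(x_n-c)/y_n$ give $y_n y_{n+1}=y_{n-1}-c$, so in logarithmic coordinates $b_n:=\log|y_n|$ the recurrence reads $b_{n+1}+b_n\approx b_{n-1}$ whenever $|y_{n-1}|\gg|c|$. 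The dominant root of the linearization $x^2+x-1=0$ is $-\beta$ (minus the golden ratio), which forces $|b_n|$ to grow like $\beta^n$ with alternating sign, i.e.\ $|y_n|$ oscillates between super-exponentially large and super-exponentially small values. Hence eventually $z_n=(y_{n-1},y_n)$ has one coordinate greater than $R$ and the other less than $1/R$, placing it in $F_R$, and so $z\in f^n(F_R)\subset f^n(G_R)$.

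The reverse inclusion $\bigcup f^n(G_R)\subset\mathbb{C}^2\setminus K^-$ follows from the forward-invariance of $\mathbb{C}^2\setminus K^-$ (since $f^{-n-1}(f(w))=f^{-n}(w)$ propagates both nonexistence and unboundedness of backward orbits from $w$ to $f(w)$), together with the containment $G_R\subset\mathbb{C}^2\setminus K^-$: for $\mathbb{C}\times\{0\}$ this is the observation made just before the proposition, and for $F_R$ it follows from the first step and the divergence argument above. The main obstacle I expect is making the asymptotic analysis in the coverage step rigorous: one must quantify the error from replacing $\log|y_{n-1}-c|$ by $\log|y_{n-1}|$, and bound how many steps the orbit can spend in the bounded regime before the dominant eigenvalue $-\beta$ of the linearization forces it into $F_R$.
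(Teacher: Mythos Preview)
Your treatment of the trapping region $F_R$ and of the inclusion $\bigcup_n f^n(G_R)\subset\mathbb{C}^2\setminus K^-$ is correct and essentially matches the paper's Claim~1: once in $F_R$, backward iterates stay there and diverge (the paper carries this out by explicit inequalities on $f^{-2}$, obtaining $|x_{-n-2k}|>d(R/d)^{2k}$ and $|y_{-n-2k}|<1/(d(R/d)^{2k})$). Your handling of the case where some $f^{-k}(z)$ fails to exist is also fine.

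The gap you flag in the coverage step is real, and the paper bypasses it by a different device rather than by completing your linearization. Instead of analyzing the backward recurrence $b_{n+1}+b_n\approx b_{n-1}$, the paper argues by contradiction using \emph{forward} iteration and Lemma~\ref{R0}. Assume $f^{-n}(z)\notin G_R$ for every $n$; fix $k_0$ with $\|z\|<R^{k_0}$ and choose $n>k_0$ so large that $\|f^{-n}(z)\|>R^4$ and $R^{F_n-k_0}>2^{F_n}$. Since $f^{-n}(z)\notin G_R$ while one coordinate exceeds $R^4$, the other coordinate must satisfy $|\,\cdot\,|\geq 1/R$ (otherwise the point would lie in $F_R$). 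From here one or two \emph{forward} steps land in $V_R$: for instance if $|x_{-n}|>R^4$ and $|y_{-n}|\geq 1/R$, then $|y_{-n+1}|=|x_{-n}|>R$ and $|x_{-n+1}|\geq R^3-|c|>R$. The Fibonacci growth estimate of Lemma~\ref{R0} then gives $\|z\|=\|f^{n-1}(f^{-n+1}(z))\|>(R/2)^{F_{n-1}}>R^{k_0}$, contradicting the choice of $k_0$.

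So the idea you are missing is this: ``large backward iterate but not in $G_R$'' forces both coordinates to be at least $1/R$, which after one or two forward steps becomes ``both coordinates above $R$''; then the already-established forward escape estimate closes the argument. This completely sidesteps the error control in the logarithmic recurrence that you correctly identified as the obstacle in your approach.
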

\begin{proof}
Put $d=2( \vert c \vert +1)$ and choose $R_1$ such that
\begin{equation}\label{eq:R1}
\vert c \vert <  R_1(d-1)/ d^2 < R_1/2 \mbox { and } R_1 > d.
\end{equation}
Let $R > R_1$ and put
 $E= \bigcup_{n=0}^{+\infty} f^{n} (G_R)$.


\begin{figure}[h]
\begin{center}
\includegraphics[scale=0.25]{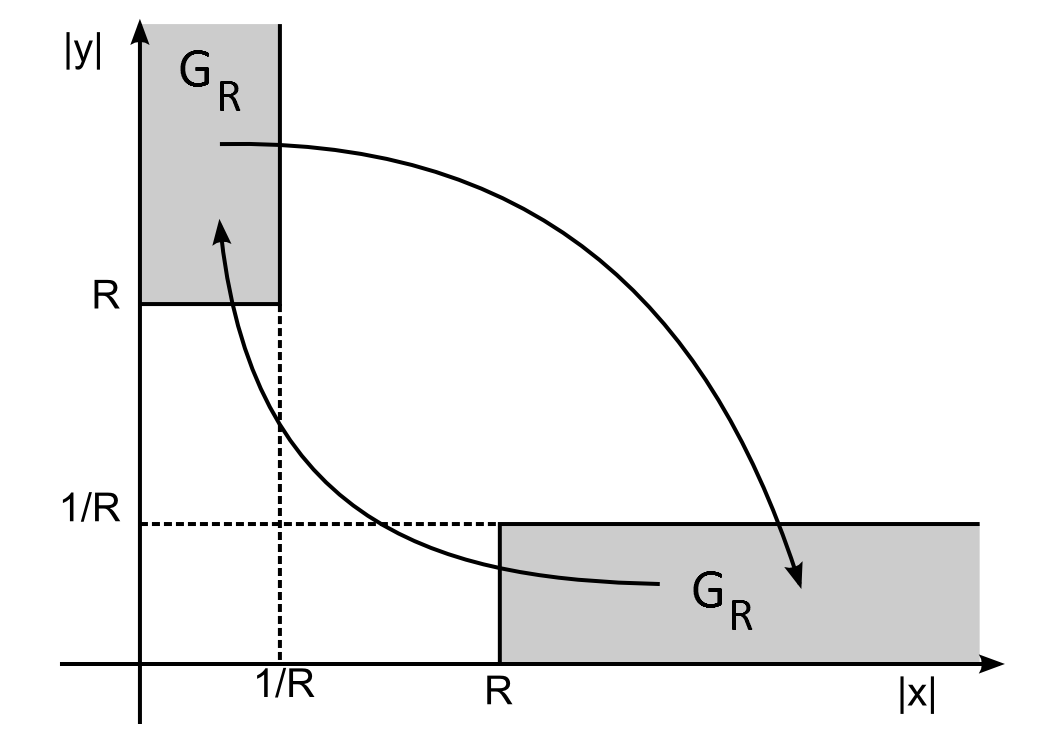}
\caption{Points in $\mathbb{C}^2 \setminus K^-$ must escape through $G_R$ under backward iteration.}
\end{center}
\end{figure}

{\bf Claim 1}:  ${E \subset \mathbb{C}^2 \setminus K^{-}}$.

Let $z \in E$ and assume that $f^{-n}(z) $ is well defined for all integers $n$.  Then $z$ is a positive iterate of some point in $G_R$: there exists a positive integer $n$ and $(x_{-n}, y_{-n}) \in G_R$ satisfying $z=f^{n}(x_{-n}, y_{-n})$.

\medskip
{\it Case 1}:  ${\vert x_{-n} \vert  >R \textrm{ and } 0 < \vert y_{-n} \vert < 1/ R}$.

If this is the case, then $f^{-n-2}(z)= (x_{-n-2}, y_{-n-2})$ satisfies
$$\vert x_{-n-2} \vert = \frac{ \vert x_{-n} - c \vert }{\vert y_{-n} \vert } > (R - \vert c \vert) R> R^2 /2 > R^2/ d$$ and
$$\vert y_{-n-2} \vert = \frac{ \vert ( y_{-n} - c) y_{-n} \vert  }{\vert x_{-n} - c \vert } <
\frac{  (1 / R + \vert c \vert) 1/ R  }{ R- \vert  c \vert } <  d/  R^2.$$
Hence
$$\vert x_{-n-4} \vert = (  R^2/ d - \vert c \vert ) R^2/ d  > R^4/ d^3,$$
since $\vert c \vert <  R(d-1)/ d^2 < R^2(d-1)/ d^2$.
We have also
$$\vert y_{-n-4} \vert < \frac{ ( d / R^2 + \vert c \vert)d/  R^2  }{ R^2 / d- \vert  c \vert }  <  d^3/  R^4.$$
Therefore, we obtain by induction on $k \in \mathbb{N}$ that
$(x_{-n-2k}, y_{-n-2k})= f^{-n-2k}(z)$ satisfies
$$\vert x_{-n-2k} \vert  > d (R/d)^{2k} \mbox { and } \vert y_{-n-2k} \vert  < \frac {1}{d (R/d)^{2k}}.$$
Therefore $z  \in \mathbb{C}^2 \setminus K^{-}$ as was to be shown.

\medskip
 {\it Case 2}: ${0< \vert x_{-n} \vert < 1/R,\; \vert y_{-n} \vert >R}$.

 In this case we show that the preimage of $(x_{-n},y_{-n})$ satisfies the hypothesis of the first case: more precisely, $f^{-n-1}(z)= (x_{-n-1}, y_{-n-1})$ is such that
\[
\vert x_{-n-1} \vert = \vert y_{-n} \vert >R > R/ d
\] and
$\vert y_{-n-1} \vert < d / R$.
From the previous case, we deduce that $\lim_{k \to +\infty} \vert x_{-n-1-2k}\vert = +\infty$. Hence  $z  \in \mathbb{C}^2 \setminus K^{-},$ and we obtain Claim 1.

\bigskip
{\bf Claim 2:} ${ \mathbb{C}^2 \setminus K^{-}\subset E}$.
This second inclusion is very similar to the previous one. 
Indeed, let $z \in \mathbb{C}^2 \setminus K^{-}$ and assume that for all integers $n \in \mathbb{N},\;
f^{-n}(z) \not \in  G_R$ where $ R > R_0$ is a fixed real number.
Let $k_0 \in \mathbb{N}$ such that $\| z \| < R^{k_0}$.
Let $n >k_0 $ be a large integer such that
$R^{F_n -k_0} > 2^{F_n} \mbox { and }  \| f^{-n} (z)\|= \| (x_{-n}, y_{-n})\|  >R^4$.

\medskip
{\it Case 1}: ${\vert x_{-n} \vert > R^4 > R,\; \vert y_{-n} \vert \geq 1/ R}$.

In this case $f^{-n+1} (z)=  (x_{-n+1}, y_{-n+1})$ satisfies
$\vert y_{-n+1} \vert= \vert x_{-n} \vert > R^4 > R $ and $\vert x_{-n+1} \vert > R^ 3 - \vert c \vert> R$.
Hence by (\ref{fnn}), we have $ \| z \| > (R/2)^{F_n} >  R^{k_0}$.
Absurd.

\medskip
{\it Case 2}: ${\vert x_{-n}\vert  \geq 1/ R ,\; \vert y_{-n} \vert  > R^4 > R}$.

Let us show that necessarily $(x_{-n+2},y_{-n+2}) \in V_R$ :
on the one hand we have
\[
\vert y_{-n+2} \vert = \vert x_{-n} y_{-n} + c \vert > R^3 - \vert c \vert > R^3/ 2 > R^2 / 4 >R
\]
and on the other
\[
\vert x_{-n+2} \vert = \vert x_{-n}(x_{-n} y_{-n} + c)+ c  \vert > R^2 / 2 - \vert c \vert > R.
\]
Once again, we conclude by (\ref{fnn}) that $ \| z \| >  R^{k_0}$, which is a contradiction.
This ends the proof of Proposition \ref{prop:Kmais2}.
\end{proof}

\paragraph{Trapping region for $K^+$.}

 The next proposition shows the existence of a {\it trapping bidisk} for $K^+$: in other words, a complex bidisk such that
every point of $K^+$ eventually enters the bidisk under forward iteration.
Such information is crucial for our later goal of describing explicitly the real slice $K^+(f_c) \cap \mathbb{R}^2$ (for $c \in \mathbb{R}$) as a finite union of stable manifolds.

\begin{prop}
\label{prop:Kmais}
Let $D:=D_R= \{(x,y) \in \mathbb{C}^2,\; \max \{\vert x \vert, \vert y \vert\} \leq R \}$.
There exists a  real number $R_2 >0$ such that for all $R > R_2$, we have :
\begin{enumerate}[a)]
\item
$D \cap  K^{+}=  \bigcap_{n=0}^{+\infty}  D \cap f^{-n} (D)$ where $ D \cap f^{-n-1} (D) \subset D \cap f^{-n} (D)$ for all $n \geq 0$.
\item
$K^{+}= \bigcup_{n=0}^{+\infty} f^{-n} (D \cap  K^{+})$  where $f^{-n} (D \cap  K^{+}) \subset  f^{-n-1} (D \cap  K^{+})$  for all $n \geq 0$.
\end{enumerate}
\end{prop}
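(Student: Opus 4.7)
The plan is to take $R_2=2(R_0+|c|)$ and to base both parts on a single central lemma: \emph{if $z\in D\cap K^+$, then $f(z)\in D\cap K^+$}, i.e.\ $D\cap K^+$ is forward invariant under $f$. Granted this, part (a) follows at once: iteration gives $f^n(z)\in D$ for every $z\in D\cap K^+$ and every $n\ge 0$, so $D\cap K^+\subset\bigcap_{n\ge 0}(D\cap f^{-n}(D))$; conversely, if $f^n(z)\in D$ for all $n$ then the forward orbit is bounded by $R$, so $z\in K^+$. The monotonicity claim in (a) is that of the cumulative intersections $\bigcap_{k=0}^n f^{-k}(D)$, and the pullback monotonicity in (b) is equivalent to $D\cap K^+\subset f^{-1}(D\cap K^+)$, which is exactly the central lemma.

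To prove the lemma, suppose $z=(x,y)\in D\cap K^+$ satisfies $|p_1(z)|>R$. Since $z\in K^+$, Lemma~\ref{R0} (letting $R\searrow R_0$ in its hypothesis) gives $\min\{|p_n(z)|,|p_{n+1}(z)|\}\le R_0$ for every $n$, and $|p_1|>R>R_0$ then forces $|p_2|\le R_0$. The relation $p_2=p_1p_0+c$ yields
\[
|p_0|\le\frac{|p_2|+|c|}{|p_1|}\le\frac{R_0+|c|}{R},
\]
and substituting into $p_1=p_0p_{-1}+c$ together with $|p_{-1}|\le R$ gives
\[
|p_1|\le|p_0||p_{-1}|+|c|\le(R_0+|c|)+|c|=R_0+2|c|<R,
\]
contradicting $|p_1|>R$. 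Hence $\|f(z)\|\le R$, and $f(z)\in K^+$ follows from $f(K^+)\subset K^+$.

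The remaining task is to show that every $z\in K^+$ has $f^N(z)\in D$ for some $N$. Assume not: $\|f^n(z)\|>R$ for all $n\ge 0$. Combined with $\min\{|p_n|,|p_{n+1}|\}\le R_0$, this forces $(|p_n|)$ to alternate between ``large'' ($>R$) and ``small'' ($\le R_0$) values; after relabeling we may assume $|p_{2k}|>R$ and $|p_{2k+1}|\le R_0$ for every $k\ge 0$. Applying the recurrence at index $2k+2$ together with $|p_{2k+3}|\le R_0$ gives $|p_{2k+1}|\le(R_0+|c|)/|p_{2k+2}|$; substituting this into $p_{2k+2}=p_{2k+1}p_{2k}+c$ and using $|p_{2k+2}|>R>2|c|$ yields the quadratic recursion
\[
|p_{2(k+1)}|^2\le 2(R_0+|c|)\,|p_{2k}|=R_2\,|p_{2k}|.
\]
Iterating this backward from an arbitrarily large $k$ gives $|p_0|\ge R_2(|p_{2k}|/R_2)^{2^k}$; since $|p_{2k}|/R_2>R/R_2>1$, the right-hand side diverges doubly exponentially as $k\to\infty$, contradicting $|p_0|<\infty$. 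The main obstacle is this last step: once the alternating pattern is identified, one must set up the correct quadratic recursion and then exploit it through a doubly exponential backward iteration, which is precisely why the threshold $R_2=2(R_0+|c|)$---and not merely something like $R_0$---is required.
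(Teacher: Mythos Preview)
Your central lemma (forward invariance of $D\cap K^+$) is proved correctly, and from it the equality in (a), the equality in (b), and the monotonicity in (b) all follow as you say. Your ``remaining task'' argument---forcing the strict alternation of $|p_n|$ between values $>R$ and $\le R_0$, then extracting the quadratic recursion $|p_{2(k+1)}|^2\le R_2|p_{2k}|$ and iterating it backward to a doubly exponential lower bound on $|p_0|$---is also valid and is quite different in spirit from the paper's proof, which at every stage routes through the backward trapping region $G_R$ of Proposition~\ref{prop:Kmais2}. Your route is more self-contained; the paper's route reuses machinery already in place.

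There is, however, a genuine gap in your treatment of the monotonicity in (a). The assertion is $D\cap f^{-(n+1)}(D)\subset D\cap f^{-n}(D)$ for each fixed $n$, i.e.\ if $z\in D$ and $f^{n+1}(z)\in D$ then already $f^n(z)\in D$. This is \emph{not} the trivial monotonicity of the cumulative intersections $\bigcap_{k\le n}f^{-k}(D)$; it is a nonvacuous claim about a single intermediate iterate, with no hypothesis that $z\in K^+$. Concretely, the hypotheses give $|p_{-1}|,|p_0|,|p_n|,|p_{n+1}|\le R$, and one must rule out $|p_{n-1}|>R$. Your central lemma does not apply here because its proof uses $\min\{|p_k|,|p_{k+1}|\}\le R_0$ for \emph{all} $k$, which comes from $z\in K^+$; with only four bounded coordinates you cannot invoke it. The paper handles this (its Claim~1) by showing that if $|p_{n-1}|>R$ then $f^{n-1}(z)$ lies in $G_{R/d}$, and the backward invariance of that region from Proposition~\ref{prop:Kmais2} then forces $\|z\|>R$, contradicting $z\in D$. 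You would need either to import that argument or to devise a finite-length version of your alternation estimate that propagates the largeness of $|p_{n-1}|$ back to $|p_0|$ or $|p_{-1}|$ without knowing anything about the intermediate values in advance.
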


\begin{figure}[ht]
\center
\includegraphics[scale=0.7]{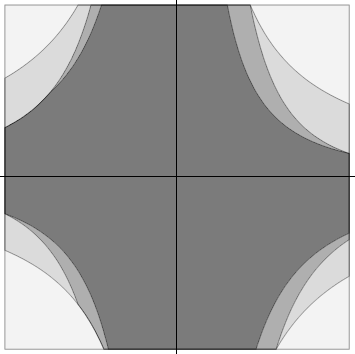}
\caption{Nested sequence $D \cap f^{-n}(D)$ for $n=0,1,2$ and $c=0.3$.}
\label{fig:nested}
\end{figure}

\begin{proof}
{\bf Proof of a)}
Let $ R_1$ be as defined in Proposition \ref{prop:Kmais2}, and let $R >R_1$.

\bigskip
{\bf Claim 1:  ${D \cap f^{-n-1} (D) \subset D \cap f^{-n} (D), \; \forall n \geq 0}$.}

Observe that the property is true for $n=0$ and $n=1$.
Assume that it is true for all integers $1 \leq k \leq n$, and that there exists $z= (x, y) \in D$ satisfying
\[
f^{n+1}(z)= (x_{n+1}, y_{n+1}) \in D \textrm{ and }  f^{n}(z)= (x_{n}, y_{n}) \not \in D.
\]
This implies that $\vert x_n \vert = \vert y_{n+1} \vert \leq R$ and $\vert y_n \vert >R$.
Using the fact that $x_{n+1}= x_n y_n+ c$, we deduce by the triangle inequality that
$ \vert x_n \vert < \frac{R+ \vert c \vert}{R}.$
Therefore
$$\vert x_{n-1} \vert = \vert y_n \vert > R>R/d$$
and 
$$ \vert y_{n-1} \vert = \vert \frac{x_n -c}{y_n} \vert < \frac{ R (1 + \vert c \vert )+ \vert c \vert}{R^2}< d/ R,$$
where $d= 2+ \vert c \vert$.
Thus $$ f^{n-1} (z) \in G_{R/d}.$$
 Let $R_2= d R_1$ and $R>R_2$, we can now deduce from Proposition \ref{prop:Kmais2} that $ G_{R/d}$ is invariant by $f^{-1}$.
This implies that $z \in G_{R/d}$ and $\| z \| > (\frac{R}{d})^n$, which is a contradiction with the fact that $z \in D_R$.

\bigskip
{\bf Claim 2}: ${D \cap  K^{+}=  \bigcap_{n=0}^{+\infty}  D \cap f^{-n} (D)}$.

Let us consider $z \in D \cap  K^{+} $ such that $f^{n}(z)= (x_n, y_n) \not \in D$, for some 
$n \in \mathbb{N}$. From Claim~1, one can assume that $n$ is large.
Suppose without loss of generality that $\vert y_n \vert > R$. Hence by Lemma~\ref{R0}, $\vert x_n \vert \leq R$. Using the triangle inequality, we deduce that
$$\vert x_{n-3} \vert= \vert y_{n-2}\vert = \frac{\vert y_n (y_n -c)\vert} {\vert x_n -c \vert} > \frac{R (R - \vert c \vert)} {(R + \vert c \vert)}= O(R),$$
and also
$$\vert y_{n-3} \vert= \frac{\vert \frac{x_n -c}{y_n} - c_n \vert} {\vert x_{n-3} \vert}  < \frac{ \frac{R + \vert c \vert}{R}+ \vert c \vert} { R { \frac{R - \vert c \vert}{ R+ \vert c \vert}}}= O(1/R) .$$
Thus there exists a positive real number $d$ such that $f^{n-3} (z) \in  G_{R/d}$. Therefore we obtain the inequality $\| z \| > R$, which contradicts the fact that $z \in D$.
 Thus $D \cap  K^{+} \subset  \bigcap_{n=0}^{+\infty}  D \cap f^{-n} (D).$
 The other inclusion is easy to check.
 This ends the proof of Claim 2 and also of Proposition~\ref{prop:Kmais}~a).

\bigskip
\noindent
{\bf Proof of b)}
Let $z \in K^{+}$ and
 $n_0 \in \mathbb{N}$ such that $\| z \| < R^{n_0}.$
Assume that $f^{n}(z) \not \in D_R$ for all integers $n$. Let~$n > n_0$,
 we can suppose that $\vert x_n \vert > R$. Hence by Lemma~\ref{R0}, $\vert y_n \vert \leq R$.
 Now $\vert x_{n-1}\vert = \vert y_n \vert \leq R$. From this one deduces $\vert y_{n-1}\vert >R$ 
 which also means $\vert x_{n-2} \vert > R  $.
 One can then deduce $\vert y_{n-2} \vert < \frac{R+ \vert c \vert}{R}= O(1)$. From that one concludes that
 \[
 \vert x_{n-3} \vert = \vert y_{n-2} \vert  < O(1) \textrm{ and } \vert y_{n-3} \vert \geq \frac {\vert x_{n-2}\vert - \vert c \vert }{\vert x_{n-3}\vert } > \frac{R - \vert c \vert} {O(1)} =  O(R).
 \]
 At this point, one has 
 \[
 \vert x_{n-4}\vert= \vert y_{n-3}\vert > O(R) \textrm{ and } \vert y_{n-4} \vert < \frac{O(1) + \vert c \vert}{O(R)} = O(1/ R).
 \]
 From that last step one can deduce that $f^{n-4}(z) \in G_{R/d}$   where $d >1$ is a positive constant. Hence $\| z \| > R^{n_0}$,  which is a contradiction. 
 Hence
 $K^{+} \subset \bigcup_{n=0}^{+\infty} f^{-n} (D_R \cap  K^{+}).$
 The other inclusion is easy to check since $f^{-1}(K^+)=K^+$.

\bigskip
{\bf Claim 3} : ${f^{-n} (D \cap  K^{+}) \subset  f^{-n-1} (D \cap  K^{+})}$, for all ${n \geq 0}$.

Indeed, it suffices to prove that $D \cap  K^{+} \subset  f^{-1} (D \cap  K^{+})$.
Let $(x_0, y_0) \in D \cap  K^{+}$ and denote $f^{n}(x_0,y_0)= (x_n, y_n)$ for all $n \geq 0$.
Observe that
\begin{eqnarray}
\label{xn}
x_n y_n+ c = x_{n+1},\; \forall n \geq 0.
\end{eqnarray}
Because of Lemma \ref{R0}, if $\max \{\vert x_n \vert, \vert y_n \vert\} > R$ then $\min \{\vert x_n \vert, \vert y_n \vert\} \leq R$.
Suppose that $f(x_0, y_0) \not \in D$.
Hence
\begin{eqnarray}
\label{xt1}
\vert y_1 \vert = \vert x_0 \vert \leq R \mbox { and } \vert x_1 \vert > R.
\end {eqnarray}
Thus
 \begin{eqnarray}
 \label{xt2}
 \vert y_2 \vert = \vert x_1 \vert > R \mbox { and } \vert x_2 \vert \leq R.
 \end{eqnarray}
Therefore
 $\vert y_3 \vert = \vert x_2 \vert \leq  R$.

\medskip
 {\it Case 1}. ${\vert x_3 \vert < R}$.

By (\ref{xt2}) and the fact that $\vert x_2 y_2+c \vert = \vert x_3 \vert <R$,
we deduce that
\begin{eqnarray}
\label{xx2}
\vert x_2 \vert < \frac{R+ \vert c \vert}{R}= O(1).
\end{eqnarray}
By (\ref{xt1}), (\ref{xx2}) and (\ref{xn}) (applied for $n=1$), we have
 \begin{eqnarray}
\label{yy1}
\vert y_1 \vert < \frac{R (1+ \vert c \vert) + \vert c \vert}{R^2}= O(1/R).
\end{eqnarray}
Hence
$\vert x_0 \vert = \vert y_1 \vert < O(1/R)$. Thus $ R < \vert x_1 \vert = \vert x_0 y_0+ c \vert  <
O(1/R) \vert y_0 \vert + \vert c \vert$. Hence
$$\vert y_0 \vert > \frac{R - \vert c \vert}{ O(1/R)} = O(R^2)> R.$$
That is absurd, since $(x_0, y_0) \in D$.

\medskip
  {\it Case 2}. ${\vert x_3 \vert \geq R}$.
  
In this case
\begin{eqnarray}
\label{xx4}
\vert y_4\vert = \vert x_3 \vert \geq R \mbox { and } \vert x_4 \vert \leq R.
\end{eqnarray}
Since $x_4 = x_3 y_3+ c$,
we deduce that
\begin{eqnarray}
\label{xx3}
\vert y_3 \vert < \frac{R+ \vert c \vert}{R}= O(1).
\end{eqnarray}
Hence
$$\vert x_2 \vert = \vert y_3 \vert < O(1) \mbox { and } \vert y_1 \vert= \vert x_0 \vert < \frac{O(1)+ \vert c \vert}{R}= O(1/R).$$
Therefore
\[
\vert y_0 \vert > \frac{R - \vert c \vert}{\vert x_0 \vert}= \frac{R(R - \vert c \vert)}{O(1)+ \vert c \vert} =  O(R^2) > R.
\]
 But this is absurd, since $(x_0, y_0) \in D_R.$
\end{proof}

The following description of the set $K^-$ is very much reminiscent of the description of the unstable manifold $W^u(H)$ of a horseshoe $H$ contained in a box $B$: the intersection $W^u(H) \cap B$ can be described as a decreasing intersection of the subsets of the form $B \cap f^n(B)$, with $n \geq 0$ (see  \cite{MNTU}, section 7.4).

\begin{prop}
\label{Kmoins}
Let $R_2 >0$ be as in Proposition~\ref{prop:Kmais}, i.e., $R_{2}=(2+|c|)R_{1}$, where $R_{1}$ satisfies inequalities~(\ref{eq:R1}). The for all $R > R_2$, we have
\begin{enumerate}[a)]
\item
$D_R \cap  K^{-}=  \bigcap_{n=0}^{+\infty}  D_R \cap f^{n} (D_R)$, where $ D_R \cap f^{n+1} (D_R) \subset D_R \cap f^{n} (D_R)$, for all integers $n \geq 0$.
\item
$K^{-}= \bigcup_{n=0}^{+\infty} f^{n} (D_R \cap  K^{-})$,  where $f^{n} (D_R \cap  K^{-}) \subset  f^{n+1} (D_R \cap  K^{-})$,  for all  integers $n \geq 0$.
\end{enumerate}
\end{prop}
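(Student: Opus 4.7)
The plan is to mirror the proof of Proposition~\ref{prop:Kmais} (the $K^+$ case) with the directions of time reversed. The forward-invariant region $V_R$ from Proposition~\ref{prop:Kmais0} and the Fibonacci blow-up of Lemma~\ref{R0} now take the role that $G_R$ and the backward blow-up of Proposition~\ref{prop:Kmais2} played there, and vice versa.

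For part~(a), I would strengthen the nestedness to the statement that if $w \in D_R$ and $f^{n+1}(w) \in D_R$, then $f^k(w) \in D_R$ for every $0 \le k \le n+1$. Arguing by contradiction, suppose $f(w) \notin D_R$; writing $(x_k, y_k) = f^k(w)$, the equality $|y_1| = |x_0| \le R$ forces $|x_1| > R$, and then $|y_2| = |x_1| > R$ forces $f^2(w) \notin D_R$, so the smallest index $m$ with $f^m(w) \in D_R$ satisfies $m \ge 3$. The computation of Claim~1 in the proof of Proposition~\ref{prop:Kmais}(a), applied to the pair $(f^{m-1}(w), f^m(w))$, places $f^{m-2}(w)$ in $F_{R/d}$ with $d = 2+|c|$. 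Proposition~\ref{prop:Kmais2}'s backward blow-up then propagates this configuration along the orbit back to $w$, yielding $\|w\| > R$ and contradicting $w \in D_R$. The equality $D_R \cap K^- = \bigcap_n D_R \cap f^n(D_R)$ follows: the nestedness gives a well-defined bounded backward orbit in $D_R$ for each point of the intersection, so it lies in $K^-$; conversely, if some $z \in D_R \cap K^-$ had $f^{-n_0}(z) \notin D_R$ for smallest $n_0 \ge 1$, the pair $(f^{-n_0}(z), f^{-(n_0-1)}(z))$ would again produce an $F_{R/d}$-configuration and Proposition~\ref{prop:Kmais2} would force $\|f^{-n}(z)\| \to \infty$, contradicting $z \in K^-$.

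For part~(b), the nestedness $f^n(D_R \cap K^-) \subset f^{n+1}(D_R \cap K^-)$ reduces to $D_R \cap K^- \subset f(D_R \cap K^-)$, which is immediate from~(a): for $w \in D_R \cap K^-$ the preimage $f^{-1}(w)$ lies in $D_R$ by~(a) and in $K^-$ by the invariance $f^{-1}(K^-) \subset K^-$, so $w = f(f^{-1}(w)) \in f(D_R \cap K^-)$. For the equality $K^- = \bigcup_n f^n(D_R \cap K^-)$, take $z \in K^-$, fix $n_0$ with $\|z\| < R^{n_0}$, and assume for contradiction that $f^{-n}(z) \notin D_R$ for all $n \ge 0$. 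A four-step coordinate-alternation analysis, mirroring the end of the proof of Proposition~\ref{prop:Kmais}(b), locates a backward iterate with $f^{-(n-4)}(z) \in V_{R/d}$ for $n$ large. Lemma~\ref{R0} and Proposition~\ref{prop:Kmais0} then give Fibonacci-type forward growth, so that $\|z\| = \|f^{n-4}(f^{-(n-4)}(z))\|$ exceeds $R^{n_0}$ for $n$ large enough, a contradiction.

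The main obstacle is keeping directions straight throughout: the $K^+$ argument detects the blow-up configuration in the forward orbit and runs the backward estimate to constrain the starting point, whereas here the configuration must be detected at a time-shifted location and propagated either forward via Lemma~\ref{R0} (in~(b)) or backward via Proposition~\ref{prop:Kmais2} (in~(a)). The hypothesis $R > R_2 = d R_1$ is precisely what keeps the reduced scale $R/d$ above the thresholds $R_0$ and $R_1$ needed by those two estimates, so that all the invoked blow-ups apply.
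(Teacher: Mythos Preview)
Your treatment of part~(a) and of the nesting in part~(b) is correct and essentially the same as the paper's; in fact your derivation of $D_R\cap K^-\subset f(D_R\cap K^-)$ directly from part~(a) is tidier than the paper's separate four-step computation in Claim~3.

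The genuine problem is the equality $K^-=\bigcup_n f^n(D_R\cap K^-)$. Your framing that ``$V_R$ and $G_R$ swap roles'' under the $K^+\leftrightarrow K^-$ duality is wrong: the map $f$ is not conjugate to $f^{-1}$, so the four-step coordinate chase of Proposition~\ref{prop:Kmais}(b) does not turn a $G_{R/d}$-conclusion into a $V_{R/d}$-conclusion when you reverse time. Concretely, from $|x_{-n}|>R$, $|y_{-n}|\le R$ and stepping \emph{forward} you only get $|y_{-n+1}|=|x_{-n}|>R$, while $|x_{-n+1}|=|x_{-n}y_{-n}+c|$ is uncontrolled from below; the pattern $|x|>R,\ |y|\le R$ followed by $|x|\le R,\ |y|>R$ can repeat indefinitely without ever producing a point in $V_{R/d}$. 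So the assertion $f^{-(n-4)}(z)\in V_{R/d}$ is unsupported, and the forward Fibonacci blow-up never fires.

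What the paper does (and what actually works) is to run the \emph{same} backward four-step analysis as in Proposition~\ref{prop:Kmais}(b), i.e.\ go from $f^{-n}(z)$ to $f^{-n-4}(z)$ using $f^{-1}$, obtaining $|x_{-n-4}|>O(R)$ and $|y_{-n-4}|<O(1/R)$, hence $f^{-n-4}(z)\in G_{R/d}$. Proposition~\ref{prop:Kmais2} then forces $\|f^{-m}(z)\|\to\infty$, contradicting $z\in K^-$ directly; the bound $\|z\|<R^{n_0}$ is not what gets violated here. In both Proposition~\ref{prop:Kmais} and Proposition~\ref{Kmoins} the heavy lifting is done by the $G_R$ backward blow-up; Lemma~\ref{R0} is used in both only to exclude the case where both coordinates simultaneously exceed $R$.
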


\begin{proof}
Let $ R_2$ as defined earlier and $R >R_2$. Because of Proposition \ref{prop:Kmais}~a), we have
$ D \cap f^{-n-1} (D) \subset D \cap f^{-n} (D), \; \forall n \geq 0$ which implies that
 \begin{eqnarray}
 \label{ffi}
 D \cap f^{n+1} (D) \subset D \cap f^{n} (D), \; \forall n \geq 0.
\end{eqnarray}

\bigskip
{\bf Claim 1}: ${D \cap  K^{-}=  \bigcap_{n=0}^{+\infty}  D \cap f^{n} (D)}$.

Let us consider $z \in D \cap  K^{-} $ such that $f^{-n}(z)= (x_{-n}, y_{-n}) \not \in D$ for some $n \in 
\mathbb{N}$. Because of (\ref{ffi}), we can assume that $n$ is large.
Assume without loss of generality that $\vert y_{-n} \vert > R$. Hence  $\vert x_{-n} \vert \leq R$.
Otherwise, because of
Lemma~\ref{R0}, $z= f^n(x, y) \not \in D$.
We deduce as done in Claim~2 of Proposition~\ref{prop:Kmais} that
$$\vert x_{-n-3} \vert >  O(R),\;
\vert y_{-n-3} \vert < O(1/R) .$$
Thus there exists a positive real number $d$ such that $f^{-n-3} (z) \in  G_{R/d}$. Hence
 $\| z \| > R$ which is absurd.
 From this it follows that
 \[
 D \cap  K^{-} \subset  \bigcap_{n=0}^{+\infty}  D \cap f^{-n} (D).
 \]
 The other inclusion is easy to check.
  This ends the proof of Claim~1 and also of Proposition \ref{Kmoins}~a).

\bigskip
{\bf Claim 2: ${K^{-}= \bigcup_{n=0}^{+\infty} f^{n} (D_R \cap  K^{-})}$ .}

 Let $z \in K^{-}$ and
 $n_0 \in \mathbb{N}$ such that $\| z \| < R^{n_0}.$
Assume that $f^{-n}(z) \not \in D_R$ for all $n$. Let $n > n_0$. Since $x_{-n}= y_{-n+1}$,
 we can suppose that $\vert x_{-n} \vert > R$, hence by Lemma~\ref{R0}, $\vert y_{-n} \vert \leq R$.
 Thus, we obtain (as done in the proof of Claim~2 of Proposition~\ref{prop:Kmais}) that
  $\vert x_{-n-4}\vert > O(R) $ and $\vert y_{-n-4} \vert < O(1/ R)$.
 We deduce that $f^{-n-4}(z) \in G_{R/d}$   where $d $ is a positive constant. Hence $\| z \| > R^{n_0}$, which is absurd.
Therefore
\[
K^{-} \subset \bigcup_{n=0}^{+\infty} f^{n} (D_R \cap  K^{+}).
\]
The other inclusion is easy to check.

\bigskip
{\bf Claim 3}: ${f^{n} (D \cap  K^{-}) \subset  f^{n+1} (D \cap  K^{-}), \forall n \geq 0}$.

We only need to prove that $D \cap  K^{-} \subset  f (D \cap  K^{-})$.
Let $(x_0, y_0) \in D \cap  K^{-}$ such that $f^{-1}(z)= (x_{-1}, y_{-1})\not \in D$.
Then
$$
\vert x_{-1} \vert = \vert y_0 \vert < R \mbox { and } \vert y_{-1} \vert \geq R.
$$
From this it follows at once that
 $$
 \vert x_{-2} \vert = \vert y_{-1} \vert \geq R \mbox { and } \vert y_{-2} \vert < \frac{R+ \vert c \vert}{R}=  O(1).
 $$
Hence
 $\vert x_{-3}\vert <O(1)$ and $\vert  y_{-3} \vert > O(R)$.
 Thus
 \[
 \vert x_{-4}\vert =\vert  y_{-3} \vert   >O(R) \textrm{ and } \vert  y_{-4} \vert < O(1/R).
 \]
We deduce that $f^{-n-4}(z) \in G_{R/d}$ where $d$ is a positive constant.
But this contradicts the fact that $z= (x_0, y_0) \in K^{-}$.

\end{proof}

\begin{prop}
\label{CompactK}
The set  $K= \overline{K^{-} \cap K^{+}}$ is  compact.
\end{prop}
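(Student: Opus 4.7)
The plan is to show the stronger statement that $K^-\cap K^+$ is in fact contained in the closed bidisk $D=D_R$ (for any fixed $R>R_2$), from which compactness of its closure follows at once, since $D$ is compact in $\mathbb{C}^2$.

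First I would take an arbitrary point $z\in K^+\cap K^-$ and invoke Proposition~\ref{Kmoins}~b) to write $z=f^n(w)$ for some $n\ge 0$ and some $w\in D\cap K^-$. The next step is to observe that $w$ must also lie in $K^+$: the forward orbit of $w$ is the concatenation of the finite initial segment $w,f(w),\ldots,f^{n-1}(w)$ with the orbit $z,f(z),f^2(z),\ldots$, and the second segment is bounded because $z\in K^+$. Hence $w\in D\cap K^+\cap K^-$.

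Next I would appeal to Proposition~\ref{prop:Kmais}~a), which characterizes $D\cap K^+$ as the decreasing intersection $\bigcap_{m\ge 0}D\cap f^{-m}(D)$. Combined with the general fact $f(K^+)\subset K^+$ from invariance property a) at the beginning of the section, this says precisely that $D\cap K^+$ is forward-invariant under $f$, and hence $f^m(w)\in D\cap K^+$ for every $m\ge 0$. Applied with $m=n$ this gives $z=f^n(w)\in D$. Therefore $K^+\cap K^-\subset D$, and since $D$ is compact, $K=\overline{K^-\cap K^+}$ is a closed bounded subset of $\mathbb{C}^2$, hence compact.

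There is no real obstacle: the statement is a clean synthesis of the two trapping-region results, Propositions~\ref{prop:Kmais} and~\ref{Kmoins}. The only subtlety worth flagging is that $f_c$ is not injective, so the argument must use the specific backward orbit of $z$ provided by Proposition~\ref{Kmoins}~b) (the one that lands back in $D$) rather than reasoning about arbitrary preimages; once that point is made, the rest is straightforward.
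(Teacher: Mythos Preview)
Your argument is correct and, in fact, proves the slightly sharper inclusion $K^{+}\cap K^{-}\subset D_{R}$. The step showing $w\in K^{+}$ can be shortened: the paper records the identity $f^{-1}(K^{+})=K^{+}$ at the start of the section, so $f^{n}(w)=z\in K^{+}$ immediately forces $w\in K^{+}$ by iterating that equality, with no need to discuss the orbit explicitly.

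The paper takes a different route. Rather than invoking the trapping results of Propositions~\ref{prop:Kmais} and~\ref{Kmoins} as black boxes, it argues directly by contradiction at the level of coordinates: assuming some $z=(x_{0},y_{0})\in K^{+}\cap K^{-}$ has $\|z\|>R^{3}$, it uses the cruder filtration facts (essentially $z\notin V_{R}$ because $z\in K^{+}$, and $z\notin G_{R}$ because $z\in K^{-}$) to pin down the sizes of $|x_{0}|$ and $|y_{0}|$, and then pushes one step forward or backward to land in $V_{R}$ or $G_{R}$, a contradiction. Your approach is more conceptual and packages the work already done in Propositions~\ref{prop:Kmais} and~\ref{Kmoins}; the paper's approach is more hands-on and is logically independent of those two propositions, relying only on Lemma~\ref{R0} and the filtration sets $V_{R},G_{R}$ from Propositions~\ref{prop:Kmais0} and~\ref{prop:Kmais2}.
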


\begin{obs}
With our definition, the set $K^-$, and thus $K$, are not necessarily closed, which is what makes it necessary to prove Proposition~\ref{CompactK}.
\end{obs}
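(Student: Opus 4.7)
The remark is a qualitative justification for the closure in Proposition~\ref{CompactK}; to ``prove'' it I would exhibit an explicit example in which $K^-(f_c)$, and even $K^+(f_c) \cap K^-(f_c)$, fails to be closed. The simplest such example is the $c = 0$ case, for which Proposition~\ref{co} gives us fully explicit formulas.

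By Proposition~\ref{co}(b), $K^-_0 = \{(x,y) \in \mathbb{C}^2 \setminus \{(0,0)\} : |y| = |x|^{1/\beta}\}$. I would then take the sequence $z_n := (1/n,\, n^{-1/\beta})$, which satisfies $|y_n| = |x_n|^{1/\beta}$ and hence lies in $K^-_0$ for every $n \geq 1$. Since $z_n \to (0,0)$ and the description explicitly excludes the origin, this already shows that $K^-_0$ is not closed. The conceptual reason for the exclusion is that $f_0^{-1}(0,0) = \{0\} \times \mathbb{C}$ is not a single point, so the backward iterate $f_0^{-1}(0,0)$ is not well-defined in $\mathbb{C}^2$ and the requirement in the definition of $K^-$ cannot be met at the origin. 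This is a manifestation of the general phenomenon noted at the start of Section~\ref{sec:prop}: $f_c$ collapses $\{0\} \times \mathbb{C}$ to the single point $(c,0)$ and so fails to be invertible along that locus for \emph{every} parameter~$c$.

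The ``and thus $K$'' clause refers to the intersection $K^+(f_c) \cap K^-(f_c)$ prior to taking the closure. To justify it I would check that the same sequence $z_n$ also lies in $K^+_0$: by Proposition~\ref{co}(a), membership is governed by $|y| \leq |x|^{-\beta}$, and for $|x_n| = 1/n \leq 1$ one has $|y_n| = |x_n|^{1/\beta} \leq |x_n|^{-\beta}$, since $1/\beta + \beta > 0$. The limit $(0,0)$ belongs to $K^+_0$ (it is a fixed point of $f_0$, hence has bounded forward orbit) but was just shown not to belong to $K^-_0$, so $(0,0) \in \overline{K^+_0 \cap K^-_0} \setminus (K^+_0 \cap K^-_0)$, which is the desired counterexample. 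There is no substantive obstacle: everything reduces to direct checks against the explicit formulas of Proposition~\ref{co}, and the only conceptual point to communicate is that the non-invertibility of $f_c$ along $\{x = 0\}$ is precisely what forces closedness to fail.
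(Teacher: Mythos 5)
Your proposal is correct, and it is worth noting that the paper itself offers no justification for this remark at all --- it is stated as a bare observation. Your route, extracting an explicit counterexample from Proposition~\ref{co}, is the natural way to substantiate it: the sequence $z_n=(1/n,\,n^{-1/\beta})$ does lie on $\{|y|=|x|^{1/\beta}\}\setminus\{(0,0)\}=K^-_0$, converges to the origin, and the origin is excluded from $K^-_0$ precisely because $f_0^{-1}(0,0)=\{0\}\times\mathbb{C}$ is not a well-defined single point under the paper's convention for backward orbits. Your verification that $z_n\in K^+_0$ (via $|x_n|\le 1$ and $1/\beta>-\beta$) and that $(0,0)\in K^+_0$ as a fixed point then handles the ``and thus $K$'' clause, so $(0,0)\in\overline{K^+_0\cap K^-_0}\setminus(K^+_0\cap K^-_0)$. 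The one caveat I would add is that your closing sentence slightly overreaches: the non-invertibility of $f_c$ along $\{x=0\}$ shows for every $c$ that $(c,0)\notin K^-_c$, but it does not by itself show that $(c,0)$ is an accumulation point of $K^-_c$, so closedness is not automatically ruled out for general $c$ by that observation alone. Since the remark only claims ``not necessarily closed,'' the single explicit case $c=0$ is exactly what is needed, and your argument establishes it.
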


\begin{proof}
 Assume $K^{-} \cap K^{+}$ is not bounded. Take
 $ R_2$ as defined before and $R >R_2$.
 Let $z = (x_0, y_0) \in K^{-} \cap K^{+}$ such that $\| z \| > R^3$.

\medskip
 {\it Case 1:} ${\|z \|=  \vert x_0 \vert}$.

By Propositions~\ref{prop:Kmais} and~\ref{prop:Kmais2}, we deduce that
$$ \frac{1} {R}  \leq \vert y_0 \vert \leq R.$$
Hence $f(x_0, y_0)= (x_1, y_1)$ satisfies
\[
\vert x_1 \vert= \vert x_0 y_0+ c \vert > R^2 - \vert c \vert > R \textrm{ and }\vert y_1 \vert = \vert x_0 \vert > R^3.
\]
 Thus by Lemma \ref{R0},  $ z \not \in K^{+}$, which is a contradiction.

\medskip
{\it Case 2}: ${\| z \|=  \vert y_0 \vert}$.

 We deduce that
$$ \frac{1} {R}  \leq \vert x_0 \vert \leq R.$$
Hence the point $f^{-1}(x_0, y_0)= (x_{-1}, y_{-1})$ satisfies
\[
\vert x_{-1} \vert= \vert y_0 \vert > R^3 \textrm{ and }\vert y_{-1} \vert = \left \vert \frac{ x_0 - c} {y_0}\right \vert < \frac{R+ \vert c \vert }{ R^3} < \frac{1}{R}.
\]
 But this implies that $ z \not \in K^{-}$, which is a contradiction.
\end{proof}

\subsection{The measure of $K^{+}$ and $ K^{-}$ .}
In this section we discuss the Lebesgue measures of the invariant subsets $K^+_{c}, K^-_{c}$ for certain 
value ranges of the parameter $c$. Denote by $\lambda$  the Lebesgue measure on $\mathbb{C}^2$.

\subsubsection{Case where $\vert c \vert $ is small.}

\begin{prop}
 For all $0 \leq \vert c \vert < \frac{1}{4},\;
  \lambda (K^{+}(f_c))= + \infty$.
\end{prop}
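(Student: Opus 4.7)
The plan is to exhibit an explicit unbounded open subset $W \subset K^{+}(f_c)$ with infinite Lebesgue measure, obtained as the first $f_c$-preimage of a forward-invariant bidisk.

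The starting point is that, since $|c| < 1/4$, one can choose $M \in (0,1/2]$ satisfying $M^{2}+|c| \le M$ (equivalently, $|c| \le M-M^{2}$, whose maximum on $[0,1]$ is $1/4$, attained at $M=1/2$). For such $M$, the bidisk $D_{M} = \{(x,y) : |x|, |y| \le M\}$ is forward invariant under $f_{c}$: for $(x,y) \in D_{M}$ one has $|xy+c| \le M^{2}+|c| \le M$, and the second coordinate of $f_{c}(x,y)$ equals $x$, so $f_{c}(D_{M}) \subset D_{M}$ and hence $D_{M} \subset K^{+}(f_c)$.

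The bidisk $D_{M}$ by itself only has finite measure; the trick is to pull it back once. Define
\[
W := f_{c}^{-1}(D_{M}) = \{(x,y) \in \mathbb{C}^{2} : |x| \le M \text{ and } |xy+c| \le M\}.
\]
Because $f_{c}^{-1}(K^{+}) = K^{+}$, we have $W \subset K^{+}(f_c)$, so it suffices to prove that $\lambda(W) = +\infty$. For each $x \in \mathbb{C}$ with $0 < |x| \le M$, the vertical slice of $W$ is the complex disk $\{y : |y+c/x| \le M/|x|\}$ of planar Lebesgue measure $\pi M^{2}/|x|^{2}$. By Fubini and polar coordinates on $\mathbb{C}$,
\[
\lambda(W) \;=\; \pi M^{2} \int_{\{0 < |x| \le M\}} \frac{d\lambda(x)}{|x|^{2}} \;=\; 2\pi^{2}M^{2} \int_{0}^{M} \frac{dr}{r} \;=\; +\infty,
\]
the divergence coming from the non-integrability of $1/r$ at the origin.

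I do not see a genuine obstacle here: no delicate dynamical estimate is required. The whole argument rests on two ingredients, the elementary existence of $M$ realizing $M^{2}+|c| \le M$, and the observation that although $D_{M}$ is bounded its preimage $W$ contains arbitrarily long "vertical tubes" over points $x$ approaching $0$ (where the slice radius $M/|x|$ blows up) while still being sent into $D_{M}$ after one iteration. The case $c = 0$ is covered identically, or alternatively follows directly from Proposition~\ref{co}~a).
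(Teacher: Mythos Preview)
Your proof is correct and follows essentially the same route as the paper: find a forward-invariant bidisk $D_M\subset K^{+}$ using $M^2+|c|\le M$, then show that its single preimage already has infinite Lebesgue measure. The paper computes $\lambda(f^{-1}(D_a))$ via the change-of-variables formula with the complex Jacobian $|\det Df^{-1}|^2=1/|y|^2$, while you compute the same quantity by Fubini on the vertical slices of $W=f^{-1}(D_M)$; in both cases the divergence reduces to $\int_0^M dr/r=+\infty$.
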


\begin{proof}
 By Proposition \ref{prop:Kmais}, we deduce that
$\lambda (K^{+})= \lim \lambda (f^{-n} (D_R \cap  K^{+})).$
Assume that $ \vert c \vert < \frac{1}{4}$, then there exists
$a <1<R$ such that  $\vert c \vert + a^2 < a$.
Hence $f(D_a) \subset D_a$  if we set
 $$D_{a}= \{z \in \mathbb{C}^2,\; \| x \| < a\},$$
which implies $D_a \subset K^{+}.$
 Thus  $(0,0)$ is an interior point of $K^{+}$.
On the other hand if we consider $\Omega_{a}= D_a \setminus \mathbb{C} \times \{0\}$, then
$f^{-1}: \Omega_{a} \to \mathbb{C}^2$ is a well defined  map
and
$$\lambda (f^{-1} (D_a)) \geq \lambda (f^{-1} (\Omega_{a}))= \int_{\Omega_{a}}
\frac{1}{\vert y \vert^2} d \lambda (x,y)= +\infty .$$
\end{proof}

Based on computer investigations we expect the following to be true:
\begin{conj}
For sufficiently large $ \vert c \vert$,
$ \lambda (K^{+}(f_{c})) = 0$.
 \end{conj}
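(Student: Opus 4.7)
The plan is to reduce the conjecture to showing that $D_R \cap K^+$ has zero Lebesgue measure for $|c|$ sufficiently large, and then invoke the increasing-union representation $K^+ = \bigcup_{n\ge 0} f^{-n}(D_R\cap K^+)$ from Proposition~\ref{prop:Kmais}. Since the complex Jacobian of $f$ equals $-x$, which vanishes only on the measure-zero critical line $\{x=0\}$, the map $f$ is nonsingular, and preimages of Lebesgue-null sets remain Lebesgue-null. Consequently, as soon as $\lambda(D_R\cap K^+)=0$, one has $\lambda(f^{-n}(D_R\cap K^+))=0$ for every $n\ge 0$, and hence $\lambda(K^+)=0$ by $\sigma$-additivity.

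To prove $\lambda(D_R\cap K^+)=0$ for $|c|$ large, the strategy is modeled on the classical one-dimensional fact that the Julia set of $z\mapsto z^2+c$ is a hyperbolic Cantor set of zero area once $|c|$ is large enough. I would first aim to establish a hyperbolic horseshoe-type structure on the compact nonwandering set $K=\overline{K^+\cap K^-}$ of Proposition~\ref{CompactK}: for $|c|$ large, the critical image $(c,0)=f(\{x=0\})$ has forward orbit escaping to infinity, which is the natural condition for hyperbolicity. Concretely, the goal is to construct a finite Markov partition of $K$ inside the trapping bidisk $D_R$ and to conjugate $f|_K$ to a subshift of finite type, with uniform expansion/contraction estimates governed by $|c|$, in analogy with the horseshoe picture for complex H\'enon maps with dominant linear part.

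Once such hyperbolicity is in place, $K^+\cap D_R$ should be a lamination by local stable manifolds of points of $K$: each leaf is a complex analytic disk (hence of real dimension two), parameterized by a transverse Cantor set. A Fubini-type argument would then give $\lambda(K^+\cap D_R)=0$ provided the transverse Hausdorff dimension is strictly less than~$2$; quantitative distortion estimates should produce a transverse dimension of order $\log 2/\log|c|$, which is less than $2$ for $|c|$ sufficiently large.

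The hard part will be adapting hyperbolic theory, classically developed for diffeomorphisms, to our non-invertible endomorphism setting: $f$ collapses the line $\{x=0\}$ to a single point and the local inverse blows up along $\{y=0\}$, so both forward and backward iteration have singular loci that complicate the construction of Markov boxes and of local unstable manifolds. Controlling the orbits of the critical line under forward iteration, and arranging Markov boundaries transverse to the singular set throughout all iterates, is the core technical obstacle; the final measure-theoretic deduction then follows comparatively straightforwardly.
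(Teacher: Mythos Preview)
The statement you are attempting to prove is explicitly labeled a \emph{Conjecture} in the paper: it is introduced with the phrase ``Based on computer investigations we expect the following to be true'' and no proof is offered. There is therefore no paper proof to compare your proposal against; the authors leave the statement open.

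As a strategy toward the conjecture, your outline is reasonable in its broad architecture, and the first reduction is correct: by Proposition~\ref{prop:Kmais}(b) one has $K^+=\bigcup_{n\ge 0} f^{-n}(D_R\cap K^+)$, and since $f$ has Jacobian $-x$ vanishing only on a null set, preimages of null sets are null, so it suffices to show $\lambda(D_R\cap K^+)=0$. However, the remainder is a programme rather than a proof. Two substantial gaps deserve emphasis. First, you assert that for large $|c|$ the critical value $(c,0)$ escapes, hence hyperbolicity should follow; but $(c,0)$ maps to $(c,c)$, then to $(c^2+c,c)$, and the forward orbit of the critical line is not obviously escaping for all large $|c|$ (and even if it does escape, deducing uniform hyperbolicity of $K$ for an endomorphism with a collapsed critical line is a genuine theorem to be proved, not a citation). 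Second, and more seriously, even granting hyperbolicity of $K$, the identification of $D_R\cap K^+$ with the local stable lamination of $K$ is not automatic in the non-invertible setting: the usual H\'enon-type argument relies on the graph-transform construction of stable manifolds and on the fact that bounded forward orbits are shadowed by orbits in $K$, both of which require adaptation when $f$ is not a diffeomorphism and local unstable manifolds need not be well defined. You flag this yourself as ``the core technical obstacle,'' which is honest, but it means the proposal does not constitute a proof.
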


\subsubsection{Case where $c$ is negative real and $\vert c \vert $ is large.}

\begin{prop}
\label{prop:mcgrandenega}
 For all real $ c  <-2,\;
  \lambda (K^{-}_{c})> 0 $.
\end{prop}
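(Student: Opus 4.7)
The plan is to exhibit an open subset $V \subset \mathbb{C}^2$ contained in $K^-_c$; then $\lambda(K^-_c) \geq \lambda(V) > 0$. The natural candidate for $V$ is a (complex) neighborhood of a fixed point of $f_c$ that is a linear repeller, since near such a point the local inverse of $f_c$ is a contracting holomorphic map, so every nearby backward orbit is defined for all time and stays bounded.

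The fixed points of $f_c$ are $p_\pm = (\beta_\pm,\beta_\pm)$ with $\beta_\pm = (1 \pm \sqrt{1-4c})/2$, both real as soon as $c < 1/4$. I focus on $p_-$, noting that $\beta_- < -1$ whenever $c < -2$. My first task is to check that both eigenvalues of the linearization $Df_c(p_-) = \bigl(\begin{smallmatrix}\beta_- & \beta_- \\ 1 & 0\end{smallmatrix}\bigr)$ have modulus strictly larger than $1$. The characteristic polynomial is $\lambda^2 - \beta_- \lambda - \beta_-$, so the product of the eigenvalues is $-\beta_- > 1$ and their sum is $\beta_- < -1$. When the discriminant $\beta_-^2 + 4\beta_-$ is negative (i.e.\ $-20 < c < -2$), the eigenvalues are a complex conjugate pair of modulus $\sqrt{-\beta_-} > 1$. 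When it is nonnegative (i.e.\ $c \leq -20$), the eigenvalues are real and both negative, and a short algebraic check shows both have absolute value greater than $1$.

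Once $p_-$ is known to be a linear repeller, I note that its second coordinate $\beta_-$ is nonzero, so the explicit formula $f_c^{-1}(u,v) = (v,(u-c)/v)$ defines a holomorphic map in a neighborhood $W$ of $p_-$ in $\mathbb{C}^2$, and $p_-$ is an attracting fixed point of this local inverse (its derivative at $p_-$ has spectrum $\{1/\lambda_+, 1/\lambda_-\}$, both of modulus $< 1$). Standard linearization then supplies an open neighborhood $V \subset W$ of $p_-$ with $f_c^{-1}(V) \subset V$ and $f_c^{-n}(z) \to p_-$ for every $z \in V$. Consequently every $z \in V$ has a well-defined and bounded backward orbit under $f_c$, so $V \subset K^-_c$, and the result follows.

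The only mildly delicate step is the eigenvalue calculation, owing to the case split between complex and real eigenvalues as $c$ varies in $(-\infty,-2)$; everything else is routine hyperbolic fixed-point theory, available here precisely because $p_-$ lies away from the singular set $\{v=0\}$ on which $f_c^{-1}$ would be undefined.
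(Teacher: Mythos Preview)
Your proposal is correct and follows essentially the same route as the paper: the paper's proof reduces to Lemma~\ref{lem:mcgrandenega}, which shows that for $c<-2$ the fixed point $(a_1,a_1)$ (your $p_-$) is attracting for $f_c^{-1}$, so its open basin lies in $K^-_c$. The only cosmetic difference is that the paper computes the Jacobian of $f_c^{-1}$ directly rather than computing $Df_c(p_-)$ and inverting, but the eigenvalue analysis and the case split at $c=-20$ are identical.
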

\begin{proof}
This is a consequence of Lemma~\ref{lem:mcgrandenega}  below which shows that, in this 
parameter range, $f_{c}$ 
has a repelling fixed point whose (open) basin is contained in $K^{-}_{c}.$
\end{proof}

\begin{lem}
\label{lem:mcgrandenega}
The fixed points of $f^{-1}$ are $(a_1, a_1)$ and $(a_2, a_2)$ where $a_1= \frac{1- \sqrt{1- 4c}}{2}$ and $a_2= \frac{1+ \sqrt{1- 4c}}{2}.$
Moreover,  for all $c <-2$, the fixed point $(a_1, a_1)$ is  an attracting point of $f^{-1}$, and is such that $a_1 <-1$.
\end{lem}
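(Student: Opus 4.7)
I will split the proof into two parts: identifying the fixed points explicitly, then analyzing the linearization to show that $(a_{1},a_{1})$ is attracting for $f^{-1}$.

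First, since $f^{-1}$ and $f$ have the same fixed points (wherever $f^{-1}$ is defined, $f^{-1}(z)=z$ if and only if $f(z)=z$), the plan is to solve the system $(xy+c,x)=(x,y)$. The second coordinate forces $y=x$, and the first then yields the quadratic $x^{2}-x+c=0$, whose roots are exactly $a_{1},a_{2}$. A quick check shows $f$ is a local diffeomorphism at these points, since the Jacobian determinant of $f$ at $(x,x)$ is $-x\neq 0$ (as $a_{i}\neq 0$ when $c\neq 0$, which is automatic since $c<-2$). So $f^{-1}$ is well-defined near each fixed point. To see that $a_{1}<-1$ when $c<-2$, note that $c<-2$ gives $1-4c>9$, hence $\sqrt{1-4c}>3$, and so $a_{1}=(1-\sqrt{1-4c})/2<-1$.

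Next, to show $(a_{1},a_{1})$ is attracting for $f^{-1}$, the natural strategy is to prove that it is \emph{repelling} for $f$ (i.e.\ both eigenvalues of $Df_{(a_{1},a_{1})}$ have modulus $>1$), since the eigenvalues of $Df^{-1}$ at this point are then their reciprocals and have modulus $<1$. I compute
\[
Df_{(a_{1},a_{1})}=\begin{pmatrix} a_{1} & a_{1}\\ 1 & 0\end{pmatrix},
\]
so the eigenvalues satisfy $\lambda^{2}-a_{1}\lambda-a_{1}=0$, giving $\lambda_{1}+\lambda_{2}=a_{1}$ and $\lambda_{1}\lambda_{2}=-a_{1}$. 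Since $a_{1}<-1$, the product $\lambda_{1}\lambda_{2}=-a_{1}>1$ is already reassuring, but this alone does not guarantee both factors have modulus $>1$; a genuine case analysis on the discriminant $\Delta=a_{1}^{2}+4a_{1}=a_{1}(a_{1}+4)$ is needed.

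The case split is the main (though entirely elementary) obstacle. In the \textbf{complex-conjugate case} $-4<a_{1}<-1$, we have $\Delta<0$, so $|\lambda_{1}|^{2}=|\lambda_{2}|^{2}=\lambda_{1}\bar\lambda_{1}=-a_{1}>1$, and we are done. In the \textbf{real case} $a_{1}\le -4$, both eigenvalues are real and negative (since $\sqrt{\Delta}<-a_{1}=|a_{1}|$ when $a_{1}<0$), and I bound the absolute values:
\[
|\lambda_{\pm}|=\frac{-a_{1}\mp\sqrt{a_{1}^{2}+4a_{1}}}{2}.
\]
For $|\lambda_{+}|>1$ I need $-a_{1}-2>\sqrt{a_{1}^{2}+4a_{1}}$; since $-a_{1}\ge 4>2$ the left side is positive, and squaring gives $a_{1}^{2}+4a_{1}+4>a_{1}^{2}+4a_{1}$, i.e.\ $4>0$, which holds. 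The inequality $|\lambda_{-}|>1$ is even easier since $|\lambda_{-}|\ge |\lambda_{+}|$. The small boundary case $a_{1}=-4$ is checked directly: $\lambda^{2}+4\lambda+4=(\lambda+2)^{2}$, so $|\lambda|=2>1$. This completes the verification that both eigenvalues of $Df_{(a_{1},a_{1})}$ have modulus strictly greater than $1$, hence $(a_{1},a_{1})$ is attracting for $f^{-1}$.
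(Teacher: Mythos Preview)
Your proof is correct and follows essentially the same approach as the paper: identify the fixed points via $x^{2}-x+c=0$, compute the linearization, and split into the real case $a_{1}\le -4$ (equivalently $c\le -20$) and the complex-conjugate case $-4<a_{1}<-1$. The only cosmetic difference is that the paper computes the Jacobian of $f^{-1}$ directly (obtaining eigenvalues $\tfrac{-1\pm\sqrt{1+4/a_{1}}}{2}$ and showing they have modulus $<1$), whereas you compute $Df$ and show its eigenvalues have modulus $>1$; these are of course equivalent since the eigenvalues are reciprocals of one another.
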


\begin{proof}
The fixed points of $f^{-1}$ are of the form $(x, x)$ where $x^2 -x+c=0$.
On the other hand, the Jacobian matrix of $f^{-1}$ on $(a_i, a_i),\;  i=1,2$ is equal
$$
J_{a_{i}}=
\left (
\begin{array}{cc}
0  &\ 1 \\
\frac{1}{a_i} & -1
\end{array}
\right)
$$
The eigenvalues of $J_{a_{i}}$ are
$$\alpha_{1, a_i}= \frac{-1- \sqrt{1+ \frac{4}{a_{i}}}}{2},\; \alpha_{2, a_i}= \frac{-1+ \sqrt{1+ \frac{4}{a_{i}}}}{2}.$$
 If $a_1 \leq -4$ or equivalently  $c \leq -20$, then  $-1 < \alpha_{1, a_1} < \alpha_{2, a_1}< 0$.
 If  $-4<a_1 <-1$, or equivalently  $-20 < c < -2$, then $\alpha_{1, a_1}$ and  $\alpha_{2, a_1}$ belong to $\mathbb{C} \setminus \mathbb{R}$.
 Since $0 <\alpha_{1, a_1} \alpha_{2, a_1}= -1/ a_1 <1$, we deduce that $\vert \alpha_{1, a_1} \vert =  \vert \alpha_{2, a_1}\vert <1$.
\end{proof}

\bigskip
\noindent{\bf Question}: By Proposition \ref{Kmoins}, we deduce that there exists $R_2>1$  such that for all $R \geq R_2,  \; \lambda (K^{-})= \lim_{n\to+\infty} \lambda ( f^{n} (D_R \cap K^{-}))= \lim_{n\to+\infty}\int_{D_R \cap K^{-}} \vert x_0 x_1 \ldots x_{n-1} \vert^2 d(x, y)$,
where $f^{i}(x, y)= (x_i, y_i)$ for all $i=0,\ldots, n-1$.
 Thus one can ask:
 Is $ \lambda (K^{-}(c))= +\infty$ for $\vert c \vert$ large?














\section{Dynamics in $\mathbb{R}^2$ }

\subsection{Generalities}
\label{subsec:general}
For a parameter $c \in \mathbb{R} $, the map $f_c(x,y)= (xy+c, x)$ can be considered as a self-map of $\mathbb{R}^2$. In this section, we will only consider real parameters $c$ and will, accordingly, 
restrict attention to the dynamics on $\R^{2}$. The invariant sets which will concern us are 
$K^\pm_{\R}(f_{c}):=K^{\pm}(f_{c})\cap\R^{2}$. {\em Since we will not consider subsets of $\C^{2}$ here 
and in order to lighten notation we will denote the sets $K^\pm_{\R}(f_{c})$ simply by $K_{c}^{\pm}$ or $K^{\pm}$.}

Restricting attention to real $c$, a lot more can be said about the topology of the invariant subsets 
$K^\pm$. We show in this section that for a large interval of parameters, the set $K^+$ consists only of the attracting basin of a 3-cycle, together with a finite set of stable manifolds of saddle points on the boundary $\partial K^+$ of $K^+$.

We have seen  that the fixed points of $f$ are $\alpha= (a_1, a_1)$ and $\theta= (a_2, a_2)$ where $a_1= \frac{1- \sqrt{1- 4c}}{2}$ and $a_2= \frac{1+ \sqrt{1- 4c}}{2}.$
Note that the fixed points are in $\mathbb{R}^2$ if and only if $ c \leq \frac{1}{4}$.
The following proposition gives the dynamical types of  $(a_1, a_1)$ and $(a_2, a_2)$  as functions of $c$.

\begin{prop}
\begin{enumerate}[a)]
\item If $ c < \frac{1}{4}$ then $(a_2, a_2)$ is a saddle point of $f$.

\item If $ c < -2$ then $(a_1, a_1)$ is a  repelling fixed point of $f$.


\item If $-2 < c < \frac{1}{4}$ then $(a_1, a_1)$ is an attracting fixed point of $f$.



\item If $c=-2$ then $(a_1, a_1)$ is an indifferent fixed point of $f$ with eigenvalues 
$e^{\pm \frac{2i \pi}{3}}$.

\item If $c=\frac{1}{4}$  then the two fixed points coincide with $( \frac{1}{2}, \frac{1}{2}) $ and the corresponding eigenvalues are $1$ and $\frac{-1}{2}$.

\end{enumerate}
\end{prop}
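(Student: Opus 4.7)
The plan is to analyze the Jacobian of $f_c$ at a generic fixed point $(a,a)$, which is
\[
Df_c(a,a)=\begin{pmatrix} a & a \\ 1 & 0 \end{pmatrix},
\]
with characteristic polynomial $p_a(\lambda)=\lambda^2-a\lambda-a$. Hence the eigenvalues satisfy $\lambda_++\lambda_-=a$ and $\lambda_+\lambda_-=-a$, with discriminant $\Delta(a)=a(a+4)$. All the cases then reduce to inserting $a=a_1$ or $a=a_2$ (recall $a_1+a_2=1$, $a_1a_2=c$) and checking the position of the roots of $p_a$ with respect to the unit circle.

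For part (a), since $c<1/4$ gives $a_2>1/2$, I evaluate: $p_{a_2}(1)=1-2a_2<0$, $p_{a_2}(-1)=1>0$, $p_{a_2}(0)=-a_2<0$. Since $p_{a_2}$ opens upwards, this locates one root in $(1,+\infty)$ and one in $(-1,0)$, so $(a_2,a_2)$ is a saddle. For parts (b) and (c), I handle $(a_1,a_1)$ by splitting on the sign of $\Delta(a_1)$: the condition $-4<a_1<0$ is equivalent to $-20<c<0$ and yields complex conjugate eigenvalues with $|\lambda_\pm|^2=-a_1$, so they lie outside the unit circle iff $a_1<-1$ iff $c<-2$. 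In the real-eigenvalue ranges ($a_1\le -4$ or $0<a_1<1/2$), I mimic the sign-of-polynomial test above: for $a_1\le -4$ both roots are $<-1$ (repelling), and for $0<a_1<1/2$ one root is in $(-1,0)$ and the other in $(0,1)$ (attracting); the borderline interval $-1<a_1\le 0$ remaining in case (c) is already covered by the complex analysis.

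Part (d) is a direct evaluation: $c=-2\Rightarrow a_1=-1$, so $p_{-1}(\lambda)=\lambda^2+\lambda+1$, whose roots are $e^{\pm 2\pi i/3}$. Part (e) is analogous: $c=1/4$ gives $a_1=a_2=1/2$, and $p_{1/2}(\lambda)=\lambda^2-\lambda/2-1/2=(\lambda-1)(\lambda+1/2)/1$ (up to factor), yielding eigenvalues $1$ and $-1/2$.

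I expect no substantive obstacle beyond bookkeeping; the only mildly subtle point is confirming that the three subranges in (b) and (c) (real roots, complex roots, real roots on the positive side) join seamlessly at $c=-20$ and $c=0$, which follows from continuity of $|\lambda_\pm|$ in $a_1$ together with the computations $|\lambda_\pm|^2=-a_1$ on the complex locus and $\lambda_+\lambda_-=-a_1$ on the real locus. The whole argument is essentially a one-parameter study of the quadratic $p_a$, and the main care is in making the case divisions consistent with the statement's parameter intervals.
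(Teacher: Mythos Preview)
Your proposal is correct and follows essentially the same approach as the paper: both compute the Jacobian $\begin{pmatrix} a & a \\ 1 & 0 \end{pmatrix}$ at a fixed point $(a,a)$, obtain the characteristic polynomial $\lambda^2 - a\lambda - a$, and then locate its roots relative to the unit circle by splitting on the sign of the discriminant $a(a+4)$ and on the relevant $c$-ranges. Your sign-test at $\lambda=-1,0,1$ for part (a) and your direct treatment of the $a_1\le -4$ subcase are slight streamlinings of the paper's argument (which bounds the explicit root formulas and, for $c<-2$, defers to an earlier lemma on $f^{-1}$), but the underlying idea is identical.
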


\begin{proof}
 the Jacobian matrix of $f$ on $(a_i, a_i), i=1,2$ is equal
$$
J_{a_{i}}=
\left (
\begin{array}{cc}
a_i  &\ a_i \\
1 & 0
\end{array}
\right)
$$
The characteristic polynomial of
$J_{a_{i}}$ is $p_i(x)= x^2 - a_i x - a_i$. Hence the eigenvalues of $J_{a_{i}}$ are
$\lambda_{1, a_i}= \frac{a_i- \sqrt{a_i^{2}+ 4 a_i}}{2},\; \lambda_{2, a_i}=  \frac{a_i+ \sqrt{a_i^{2}+ 4 a_i}}{2}$.

Assume $c <0$, hence $a_1 <0$.

\medskip
{\it Case 1:  ${ c <-2}$}.
Then by Lemma \ref{lem:mcgrandenega}, $(a_1, a_1)$ is a repelling fixed point of $f$.
On the other hand, it is easy to check that  $-1 < \lambda_{1, a_2}<0$ and $\lambda_{2, a_2} >1$.
Hence $(a_2, a_2)$ is a saddle point.

\medskip
{\it Case 2:  ${-2 < c <0}$}.

In this case $-1 < a_1 < 0$ and $ 1 < a_2 < 2$.
Then $\lambda_{2, a_1} = \overline{\lambda_{1, a_1}} \in \mathbb{C} \setminus \mathbb{R}$.
Since $\vert \lambda_{1, a_1} \vert^2 = \vert a_1 \vert <1$, we
 deduce that $(a_1, a_1)$ is  an  attracting fixed point of $f$.
On the other hand, it is easy to check that  $-1 < \lambda_{1, a_2}<0$ and $\lambda_{2, a_2} >1$.
Hence $(a_2, a_2)$ is a saddle point of $f$.

\medskip
Now suppose that $0 < c <\frac{1}{4}$.
Hence $0 < a_1 < \frac{1}{2} < a_2 <1$.
Then $-1 < \lambda_{1, a_1}<0,\; 0 <\lambda_{2, a_1} <1,\; -1 < \lambda_{1, a_2}<0,\; \lambda_{2, a_1} >1$.
Thus $(a_1, a_1)$ is an attracting fixed point of $f$ and  $(a_2, a_2)$ is saddle  point of $f$.

The last cases $c=-2$ and $c=\frac{1}{4}$ are simple and we omit the proofs.
\end{proof}

The following proposition spells out a curious fact: the point $(-1,-1)$ is $3$-periodic, for any choice
 of parameter $c$.
\begin{prop}
\label{K1}

For any $c \in \mathbb{R}$, the only $3$-cycle  of $f_c$ is $p=(-1,-1),$
$ f(p)= (1+c, -1),\; f^2(p)= (-1, 1+c)$.
\end{prop}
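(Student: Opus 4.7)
The plan is to translate $3$-periodicity into a symmetric algebraic system and then exploit a pairwise-difference trick. Since $f(x,y) = (xy + c, x)$ copies the first coordinate into the second, the orbit of any $(x_0, y_0)$ takes the form $(x_0, y_0), (x_1, x_0), (x_2, x_1), \ldots$ with $x_{n+1} = x_n x_{n-1} + c$ for $n \ge 1$ (where we set $x_{-1} := y_0$). The condition $f^3(x_0, y_0) = (x_0, y_0)$ therefore forces $y_0 = x_2$ together with the cyclic system $v = uw + c$, $w = uv + c$, $u = vw + c$ for $(u, v, w) := (x_0, x_1, x_2)$.

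Next, subtract the equations pairwise. The identity that falls out is
\[
(v - w)(1 + u) = 0, \quad (w - u)(1 + v) = 0, \quad (u - v)(1 + w) = 0,
\]
which reduces everything to a short case analysis. If $u = v = w$, the system collapses to the fixed-point equation $u^2 - u + c = 0$, giving a $1$-cycle rather than a genuine $3$-cycle. If all three coordinates are distinct, each factor forces $u = v = w = -1$, a contradiction. The remaining possibility is that exactly two coordinates coincide, say $u = v \neq w$; then the second and third identities force $1 + u = 1 + v = 0$, so $u = v = -1$, and plugging into $v = uw + c$ yields $w = 1 + c$. The two sibling sub-cases ($v = w \neq u$ and $u = w \neq v$) are just cyclic shifts of the starting point along the same orbit.

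This produces the unique $3$-cycle $(-1, -1) \mapsto (1+c, -1) \mapsto (-1, 1+c) \mapsto (-1, -1)$, whose correctness is then immediately verified by direct substitution into $f$. The only real bookkeeping hurdle is confirming that the three ``two-equal'' sub-cases describe the same underlying orbit rather than distinct ones; this is immediate from the explicit form, since cycling $(u, v, w)$ corresponds to advancing the starting point along the cycle. As a side remark, at the degenerate parameter $c = -2$ we have $1 + c = -1$, so the ``$3$-cycle'' collapses onto the fixed point $(-1, -1)$, consistent with the earlier proposition identifying $(a_1, a_1) = (-1, -1)$ at that parameter.
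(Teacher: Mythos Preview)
Your argument is correct and follows essentially the same route as the paper: write down the algebraic system for $3$-periodicity and subtract equations pairwise to force a factorization. The paper works with the two variables $(x,y)$ directly, obtaining $x=y$ or $xy+c+1=0$ (the first case leading to a cubic with roots $a_1,a_2,-1$), whereas your three-variable setup $(u,v,w)=(x_0,x_1,x_2)$ exploits the cyclic symmetry and separates the fixed points from the genuine $3$-cycle without passing through a cubic; one small slip is that in the sub-case $u=v\neq w$ it is the \emph{first} and second factored identities (not the second and third) that force $u=v=-1$, since the third is trivially satisfied, but the conclusion is unaffected.
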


\begin{proof}Let $(x,y)$ be a $3$-periodic point of $f$  then
$y= (xy+c)x+c$  and $x=(xy+c)y+c$. Hence $x=y$ or $xy= -c-1$.
In the case where $x=y$, we deduce that $x=y \in \{a_1, a_2, -1)$. Hence $p=(-1, -1),\; f(p)= (1+c, -1)$ and $f^2(p)= (-1, 1+c)$  are the points of the cycle of period $3$ of $f_c$.

If $xy= -c-1$, we obtain that $x= -y+c$ and $y^2 + cy +c+1= 0$. But then $y \in \{-1, c+1\}$ and thus $(x, y)= (1+c, -1)$ or $(x, y)= (-1, 1+c)$.
This concludes the proof.
\end{proof}

\begin{obs}
It is easy to check that $f_{c}$ has no 2-cycles (which are not fixed). 
\end{obs}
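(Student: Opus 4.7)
The plan is a direct algebraic computation. Writing out one more iterate, we have
\[
f_c^{2}(x,y) = f_c(xy+c,\,x) = \bigl(x^{2}y + cx + c,\; xy + c\bigr),
\]
so a 2-periodic point $(x,y)$ is characterized by the system
\[
x^{2}y + cx + c = x, \qquad xy + c = y.
\]
My first step would be to exploit the second equation, which rearranges to $y(x-1) = -c$. I would then split into two cases according as $x=1$ or $x \ne 1$.

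In the case $x=1$, the equation $y(x-1)=-c$ forces $c=0$, and then the first equation becomes $y + 0 + 0 = 1$, giving $(x,y)=(1,1)$; but $(1,1)$ is fixed by $f_{0}$, so this is not a genuine 2-cycle. In the case $x \ne 1$, one solves $y = c/(1-x)$ and substitutes into the first equation. After multiplying through by $(1-x)$, the terms $cx^{2}$ and $cx$ cancel against the expansion of $(cx+c-x)(1-x)$, and what remains is simply
\[
x^{2} - x + c = 0.
\]
This is exactly the equation satisfied by the $x$-coordinates of the fixed points $\alpha=(a_{1},a_{1})$ and $\theta=(a_{2},a_{2})$. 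Substituting $c = x(1-x)$ back into $y = c/(1-x)$ gives $y = x$, so $(x,y)$ lies on the diagonal and coincides with one of the fixed points.

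Combining the two cases, every solution of $f_c^{2}(x,y)=(x,y)$ is in fact fixed by $f_c$, which is precisely the assertion of the remark. The only step that requires any care is the algebraic cancellation in the $x \ne 1$ case; I do not expect a genuine obstacle, since the manipulation is essentially forced once one uses the second equation to eliminate $y$.
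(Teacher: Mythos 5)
Your computation is correct: eliminating $y$ via $xy+c=y$ and substituting into the first component reduces the 2-periodicity system to $x^2-x+c=0$ together with $y=x$, so every 2-periodic point is a fixed point (the degenerate case $x=1$ forcing $c=0$ and the fixed point $(1,1)$ is also handled properly). The paper offers no proof of this remark beyond "easy to check," and your direct verification is exactly the intended argument, in the same spirit as the paper's own computation for 3-cycles in Proposition \ref{K1}.
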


 \subsection{Case where $0< c <1/4$ }

\subsubsection { Description of $K^{+}$}

In this section, we give a detailed description of the set $ K^+$ for parameters $c \in (0,1/4)$.  In this interval, the map $f_c$ has a single attracting basin, which coincides with the interior of $K^+$. A priori the maps $f_c$ might have many more such attracting basins. Ruling out the existence of some attractor of very high period is what makes the following study quite delicate and technical.


 \begin{teo}
 \label{ws}
For $0< c <1/4$, $K^+$ is the finite union of stable manifolds
 \[
 K^{+}= W^{s}(\alpha) \cup W^{s}(\theta)\cup  W^{s}(p)\cup W^{s}(f(p)) \cup W^{s}(f^2(p)),
 \]
 where $p= (-1, -1),\;  \alpha= (a_1, a_1)$ and $\theta= (a_2, a_2) $.
 Moreover $W^{s}(\alpha)=\mathrm{int} (K^{+})$ and $\partial{K}^{+}= W^{s}(\theta)\cup  W^{s}(p)\cup W^{s}(f(p)) \cup W^{s}(f^2(p))$.

 \end{teo}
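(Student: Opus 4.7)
The plan is to prove the two inclusions separately, with the easy direction first. Each of the five stable manifolds lies in $K^+$ because points whose forward orbit converges to a periodic orbit have bounded forward orbits. The content of the theorem is the reverse inclusion, together with the identification of the interior and boundary parts.

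For the reverse inclusion, I would take any $z \in K^+$ and analyze its $\omega$-limit set $\omega(z)$. By Proposition \ref{prop:Kmais}, applied to the real trap $D_R \cap \R^2 = [-R,R]^2$, the forward orbit of $z$ eventually enters and remains in this square, so $\omega(z)$ is a non-empty compact $f$-invariant subset of $K^+ \cap [-R,R]^2$. The crucial claim is that the only non-empty compact $f$-invariant subsets of $K^+$ are the three periodic orbits $\{\alpha\}$, $\{\theta\}$, and $\{p, f(p), f^2(p)\}$ and finite unions thereof, so $\omega(z)$ must be one of these three orbits. Given this trichotomy, standard facts about attracting/saddle periodic points place $z$ into the appropriate stable set: $W^s(\alpha)$ if $\omega(z)=\{\alpha\}$, $W^s(\theta)$ if $\omega(z)=\{\theta\}$, and the union $W^s(p)\cup W^s(f(p))\cup W^s(f^2(p))$ if $\omega(z)$ is the 3-cycle (noting that $W^s(p)$ here is taken with respect to $f^3$, and $f(W^s(p))=W^s(f(p))$, etc.).

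Once the set equality is established, the identification $W^s(\alpha)=\mathrm{int}(K^+)$ follows from soft arguments: $W^s(\alpha)$ is open (basin of an attracting point with both eigenvalues strictly inside the unit circle, by the preceding proposition), so $W^s(\alpha)\subseteq\mathrm{int}(K^+)$; and the other four sets $W^s(\theta),W^s(p),W^s(f(p)),W^s(f^2(p))$ are smoothly immersed one-dimensional curves (stable manifold theorem applied to the saddles of $f$ and $f^3$), hence nowhere dense. So they cannot absorb any open part of $K^+$, which forces $\mathrm{int}(K^+)=W^s(\alpha)$ and $\partial K^+$ equal to the union of the saddle stable manifolds.

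The main obstacle is the ``trichotomy'' step, that is, ruling out any other recurrent dynamics in $K^+ \cap \R^2$ for $c\in(0,1/4)$. A priori, the $\omega$-limit set could be another attracting periodic orbit of arbitrarily high period, an invariant curve, or a more exotic minimal set. To exclude these I would combine three ingredients: (a) explicit algebra showing that no periodic orbits of small period exist beyond $\alpha$, $\theta$, and the 3-cycle identified in Proposition \ref{K1} (plus the remark that there are no 2-cycles); (b) the Jacobian identity $|\det Df(x,y)| = |x|$, which makes $f$ strongly area-contracting on the part of the trapping region where $|x|<1$, and in particular in a neighborhood of $\alpha=(a_1,a_1)$ with $a_1<1/2$; and (c) a geometric analysis of how the square $[-R,R]^2$ is folded by $f$, using the fold locus $\{x=0\}$ to partition the trap into subregions whose images under $f$ can be tracked by hand. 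Making this rigorous in a uniform way over the parameter interval $(0,1/4)$, and checking that the saddle stable manifolds do indeed separate the trap into pieces each attracted to $\alpha$, will be the technically delicate step on which the theorem rests.
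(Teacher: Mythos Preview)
Your high-level strategy—reduce to identifying the $\omega$-limit set of a point in $K^+$ and prove a trichotomy—is sound in principle, but the hard step is left essentially open, and the tools you propose for it do not close the gap. Ingredient~(a) rules out nothing about periodic orbits of higher period or non-periodic minimal sets. Ingredient~(b) fails globally: $|\det Df(x,y)|=|x|$ is $\ge 1$ on a large part of the trapping region (the 3-cycle itself visits $x=1+c>1$ and $x=-1$), so there is no uniform area contraction to exploit, and in particular no mechanism here for excluding recurrent dynamics near the 3-cycle. What remains is your ingredient~(c), ``geometric analysis of how the square is folded'', and that is indeed the entire content of the paper's proof—but carried out in a specific, lengthy form that your outline does not anticipate.

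The paper does not argue via $\omega$-limit sets or invariant-set classification at all. Instead it partitions the plane into explicit closed rectangles: four ``escape'' quadrants $L,M,N,P$ whose points are shown by a monotonicity argument to diverge (Proposition~\ref{prop:creala}), a core block $\mathcal D=Q_0\cup Q_1\cup Q_2\cup Q_3$, and a collar $A,B,\ldots,H$ between them. Elementary inequalities determine exactly which rectangles each $f(Q_i)$ and $f(X)$ can meet (Lemmas~\ref{insid} and~\ref{prop:outsid}), yielding a finite transition graph. The key device, repeated in every case, is \emph{coordinatewise monotonicity along itinerary cycles}: for each admissible cycle in the graph (e.g.\ $Q_0\to A\to Q_0$, $Q_3\to Q_1\to Q_3$, or the length-6 loop through $F,C,E,G,B,D$) one checks by hand that $x_{n_k}$ and $y_{n_k}$ are monotone along the return subsequence, hence convergent, and then solves the limiting algebraic relations to identify the limit as $\alpha$, $\theta$, or a point of the 3-cycle (Propositions~\ref{prop:r0a}, \ref{r1}, \ref{prop:r0}). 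This is what replaces your unproved trichotomy; it is concrete and elementary but involves a substantial case analysis that your proposal does not supply.

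Your argument for the ``moreover'' part ($W^s(\alpha)=\mathrm{int}(K^+)$ via openness of the attracting basin and one-dimensionality of the saddle stable manifolds) is correct and is in fact more explicit than what the paper writes on this point.
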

 
\begin{figure}[ht]
\center
\includegraphics[scale=0.35]{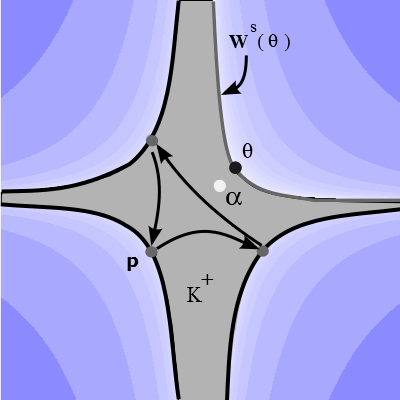}
\caption{Decomposition of $K^+(f_{0.22})$ as a union of stable manifolds}
\label{fig:JuliaSetsReal}
\end{figure}

 The proof will involve a detailed study of the orbits of points and the way they visit some 
 partition (defined below) of the plane in various rectangular regions.

\begin{minipage}{0.5\textwidth}
\begin{figure}[H]
\includegraphics[scale=0.2]{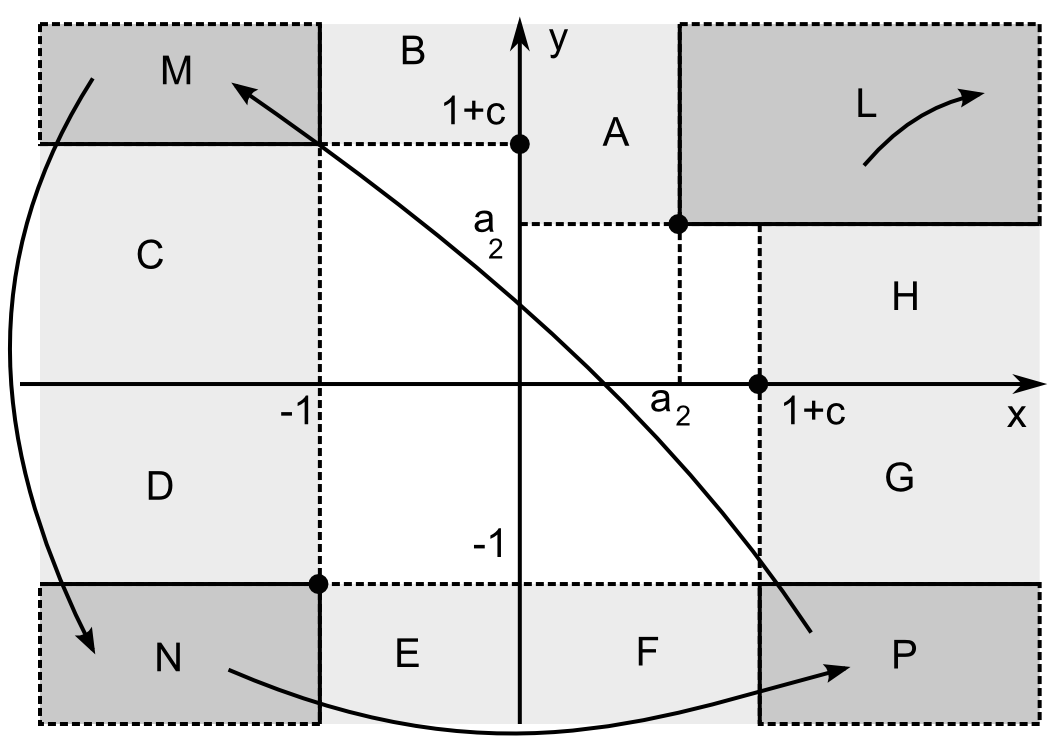}
\caption{\label{fig:LMNP} Rectangular regions $L,M,N,P$}
\end{figure}
\end{minipage} \hfill
\begin{minipage}{0.45\textwidth}
 We define various rectangular regions:
 \begin{enumerate}
 \item[a)] $L= [a_2, +\infty[ \times [a_2, +\infty[,$
 \item[b)] $M= ]- \infty, -1] \times [1+c, +\infty[,$
 \item[c)] $N=  ]- \infty, -1] \times ]- \infty, -1]$
 \item[d)] $P=[1+c, + \infty[ \times ]- \infty, -1],$
\end{enumerate}

\end{minipage}

\bigskip
Let us also introduce $S'= L \cup M \cup N \cup P$ and $S= S' \setminus \{\alpha, \theta, p, f(p), f^2(p)\}$.

\begin{minipage}{0.5\textwidth}
\begin{figure}[H]
\includegraphics[scale=0.2]{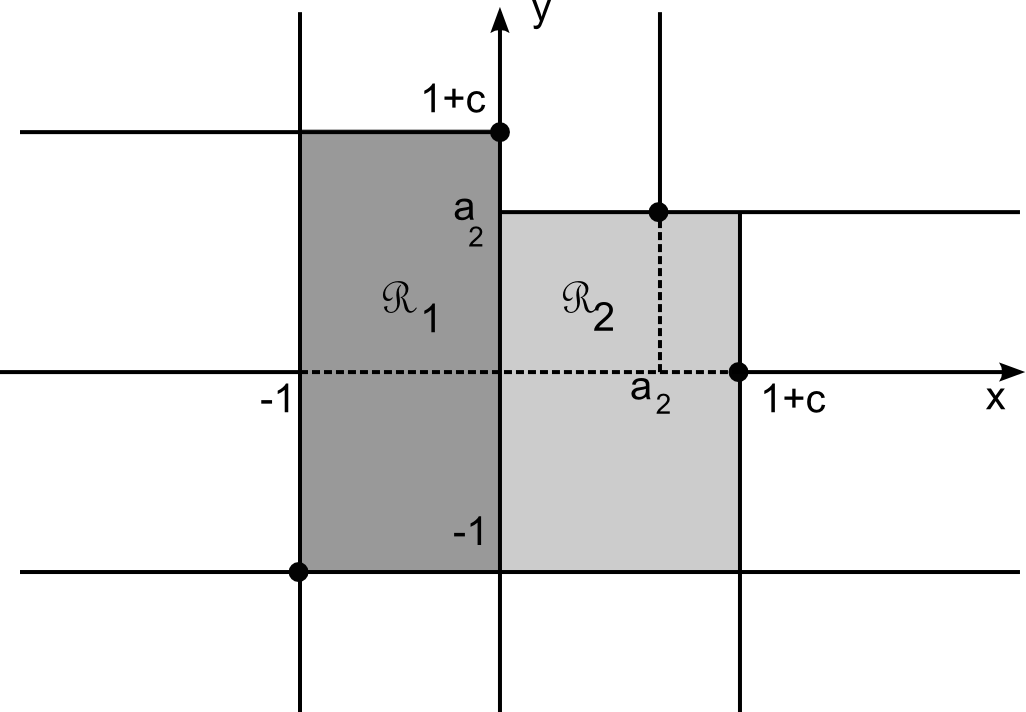}
\caption{\label{fig:R1R2} Rectangles $\mathcal R_1, \mathcal R_2$}
\end{figure}
\end{minipage} \hfill
\begin{minipage}{0.45\textwidth}
Rectangles $\mathcal R_1$ and $\mathcal R_2$:
\begin{enumerate}
\item[a)] ${\cal R}_1= [-1, 0] \times [-1, 1+c]$
\item[b)] ${\cal R}_2= [0, 1+c] \times [-1, a_2]$.
\end{enumerate}

The union is denoted by $\mathcal D:= {\cal R}_1 \cup {\cal R}_2.$

\end{minipage}

Observe that $0 < a_1 < \frac{1}{2}$ and $\frac{1}{2} < a_2 < 1$. Using the partition, one obtains in the next proposition a crude description of the dynamics by analyzing how the different subsets of the partition map into each other (see Figure \ref{fig:LMNP}).

\begin{prop}
\label{prop:creala}
The following properties are valid:
\begin{enumerate}[a)]
\item
$f(L) \subset L,\; f(M) \subset N,\; f(N) \subset P,\; f(P) \subset M.$
\item
For all $(x, y) \in S,\;  \| f^{n} (x, y) \|$ diverges to $\infty$ as $n$ goes to $+\infty$.
\item
$\mathbb{R}^2 \setminus K^{+}= \bigcup_{n=0}^{+\infty} f^{-n}(S) $, where $ f^{-n}(S) \subset f^{-n-1}(S) $ for all integers $n \geq 0$.
\end{enumerate}
\end{prop}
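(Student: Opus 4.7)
Part (a) is direct algebra. In $L$, since $a_2$ satisfies $a_2^2 - a_2 + c = 0$, one has $xy + c \geq a_2^2 + c = a_2$ and $x \geq a_2$; the cyclic inclusions on $M, N, P$ follow from $xy \geq 1$ on $N$, $xy \leq -(1+c)$ on $M, P$, and the obvious control on the second coordinate. As a consequence, $f$-orbits starting in $L$ stay in $L$, while orbits starting in $M \cup N \cup P$ cycle through the three regions, so $f^3$ preserves each of them. For part (b) in $L$, set $u_n = x_n - a_2$, $v_n = y_n - a_2$. Using $a_2^2 + c = a_2$ one finds $u_{n+1} = x_n v_n + a_2 u_n$ and $v_{n+1} = u_n$, so the bound $x_n \geq a_2$ gives $u_{n+1} \geq a_2(u_n + u_{n-1})$ for $n \geq 1$. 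A short case analysis according to whether $u_0$ vanishes shows that $(x_0,y_0) \neq \theta$ forces $u_1 > 0$. The dominant root of $z^2 - a_2 z - a_2 = 0$ is $\lambda_2 = (a_2 + \sqrt{a_2^2 + 4a_2})/2$, and $\lambda_2 > 1$ is equivalent to $a_2 > 1/2$, which holds for $0 < c < 1/4$. Hence $u_n$ grows at least like $\lambda_2^n$, proving divergence.

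For the $N$-case, write $(x_0, y_0) = (-1-s, -1-t) \in N \setminus \{p\}$, so $(s,t) \in [0,\infty)^2 \setminus \{(0,0)\}$. A direct calculation gives $f^3(x_0, y_0) = (-1-s', -1-t')$ with
\begin{equation*}
\tilde u = s+t+st, \quad \sigma = (1+c)s + \tilde u(1+s), \quad t' = \sigma, \quad s' = \tilde u + \sigma(1+c+\tilde u).
\end{equation*}
Dropping the nonnegative higher-order terms gives $\sigma \geq (2+c)s + t$ and $s' \geq (3 + 3c + c^2)s + (2+c)t$, whence
\begin{equation*}
s' + t' \geq (5 + 4c + c^2)s + (3+c)t \geq 3(s+t).
\end{equation*}
Iterating, $s_n + t_n \geq 3^n(s_0 + t_0) \to +\infty$. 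Since $s_n \geq t_n$ for $n \geq 1$, both $|x_{3n}| = 1 + s_n$ and $|x_{3n+1}| = 1 + c + s_n + t_n + s_n t_n$ eventually exceed $R_0$, and Lemma \ref{R0} implies that the full orbit of $(x_0, y_0)$ is unbounded. The remaining cases reduce to this one: since $f$ is one-to-one off $\{x=0\}$, $f^{-1}(p) = \{f^2(p)\}$ and $f^{-1}(f^2(p)) = \{f(p)\}$, so any $(x,y) \in M \setminus \{f^2(p)\}$ (resp.\ $P \setminus \{f(p)\}$) lands after one (resp.\ two) iterations in $N \setminus \{p\}$.

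Part (c) is then formal. The inclusion $\bigcup_n f^{-n}(S) \subseteq \R^2 \setminus K^+$ follows from (b) and the $f$-invariance of $K^+$. Conversely, Proposition \ref{prop:Kmais0}, applied on the real slice, gives $\R^2 \setminus K^+ = \bigcup_n f^{-n}(V_R \cap \R^2)$ for $R > R_0$; choosing $R$ large enough so that the four sign-quadrants of $V_R \cap \R^2$ fall respectively into $L, M, N, P$, we obtain $V_R \cap \R^2 \subseteq S$ (the bounded points $\alpha, \theta, p, f(p), f^2(p)$ automatically lie outside $V_R$). For the monotonicity $f^{-n}(S) \subseteq f^{-n-1}(S)$, it suffices to show $f(S) \subseteq S$, which follows from $f(S') \subseteq S'$ (part a) together with the observation that the only preimages of the excluded points $\alpha, \theta, p, f(p), f^2(p)$ lying in $S'$ are $\theta, p, f(p), f^2(p)$ themselves. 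The main obstacle is the algebraic verification of the uniform expansion estimate $s' + t' \geq 3(s+t)$ on $N$; it bypasses the need for a saddle-linearization plus escape argument around the 3-cycle and makes the global divergence in $M \cup N \cup P$ rigorous without appealing to hyperbolic-set machinery.
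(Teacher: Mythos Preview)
Your proof is correct, and for part (b) it follows a genuinely different route from the paper's. The paper handles the $L$-case by tracking the quantity $\min\{x_{2n},y_{2n}\}/a_2$ through the nonlinear iteration $d_{n+1}=a_2(d_n^2-1)+1$ and showing it diverges; for the $N$-case it proves that the coordinates of $f^{3n}(x,y)$ are strictly decreasing, argues that if they were bounded the limit would have to be a period-$3$ point with both coordinates $<-1$, and rules that out algebraically. Your argument replaces both of these monotone-limit-contradiction schemes by explicit growth rates: a linearized lower bound $u_{n+1}\ge a_2(u_n+u_{n-1})$ with dominant root $>1$ for $L$, and the uniform expansion $s'+t'\ge 3(s+t)$ on $N$. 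This is more quantitative (you get exponential rates rather than mere divergence) and avoids the fixed-point classification step, at the price of the algebraic computation of $f^3$ in shifted coordinates. The verification that $u_1>0$ and the comparison with the linear recurrence (both coefficients positive, so induction goes through) are fine; the appeal to Lemma~\ref{R0} at the end of the $N$-case is correct though slightly redundant, since you have already shown $|x_{3n}|,|x_{3n+1}|,|x_{3n+2}|\to\infty$ directly. For part (c) the two arguments coincide (both reduce to $V_R\cap\R^2\subset S$ via Proposition~\ref{prop:Kmais0}); your treatment of the monotonicity $f^{-n}(S)\subset f^{-n-1}(S)$, via $f(S)\subset S$ and the explicit preimages of the excluded points, is in fact more complete than what the paper writes out.
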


\begin{proof}
a) The proof of the first item is a simple consequence of basic inequalities.
Let $(x,y) \in \mathbb{R}^2$.
\begin{enumerate}[i)]
\item $(x,y) \in L \Rightarrow xy+c \geq a^2+ c = a_2 \Rightarrow f(x,y)= (xy+c, x) \in L$.
\item $(x,y) \in M \Rightarrow xy+c \leq -1 \Rightarrow f(x,y) \in N$.

\item $(x,y) \in N \Rightarrow xy+c \geq 1+c \Rightarrow f(x,y) \in P$.

\item $(x,y) \in P \Rightarrow xy+c \leq -1 \Rightarrow f(x,y) \in M$.

\end{enumerate}

\vspace{0.5em}
\noindent
b) For the next item, the study is more intricate. We proceed by following carefully the itineraries of the points and showing that in many cases some subsequences of the orbits can be shown to be monotone.

Let $(x, y) \in S$, then $(x, y) \in  L \cup M \cup N \cup P \setminus \{\alpha, \theta, p, f(p), f^2(p)\}$.

\vspace{0.5em}

{\it Case 1}:  $(x, y) \in L \setminus \{\theta\}$.

\vspace{0.5em}

{\it Case 1.1}: $\min \{x, y\}> d_0 a_2 $ where $d_0 >1$.

\vspace{0.5em}

{\bf Claim:} For all integers $n \geq 0,$ let us show that $f^{2n} (x, y)= (x_{2n}, y_{2n})$ satisfies
$\min \{x_{2n}, y_{2n}\}> d_n a_2 $
where $d_{n+1}= g(d_{n})$ for all integers $n \geq 0$ and $g(x)= a_2 (x^2-1)+1$.

\vspace{0.5em}

The proof is by induction.
Assume that the claim is true for $n$. Then
$$y_{2n+2}= x_{2n} y_{2n}+c > d_n^2 a_2^2+ c=  (d_n^2-1) a_2^2+ a_2= d_{n+1} a_2$$
and similarly
$$x_{2n+2}= (x_{2n} y_{2n}+c)x_{2n}+ c > d_{n+1}d_{n} a_2^2+ c>  d_n^2 a_2^2+ c= d_{n+1} a_2.$$
And this concludes the proof of the claim.

Now, since $a_2 \geq 1/2$, then the function $g(x)-x $ is non decreasing in $[1, +\infty]$. Hence $(d_n)_{n \geq 0}$ is a non decreasing  sequence.
If $(d_n)$ is bounded, then $(d_n)$
is convergent. Let $l= \lim  d_n$, then $l \in \{1, \frac{1}{a_2}-1\}$.
Hence $l  \leq 1$, which is absurd, since $d_n >1$ for all integers $n \geq 0$.
Thus $\lim d_n = +\infty$. On the other hand, for all integers $n \geq 0$ one has
\[
  x_{2n+1}= y_{2n+2} > d_{n+1}a_2 \textrm{ and } y_{2n+1}= x_{2n} > d_{n}a_2.
  \]
We deduce from this that
$\lim x_{n}= \lim y_n= + \infty$.

\vspace{0.5em}

{\it Case 1.2: $\min \{x, y\}=  a_2 $}

Since $\max\{x, y\} > a_2$, we deduce that there exists a real number $d_0 >1$ such that 
$\min\{x_2,y_2\}> d_0 a_2 $, and then one can conclude as in case 1.1.

\vspace{0.5em}

{\it Case 2}:  $(x, y) \in N \setminus \{p\}, i.e., \max\{x, y\} \leq -1$ and $(x, y) \ne (-1, -1)$.

\vspace{0.5em}

{\it Case 2.1}: $\max\{x, y\} < -1$.

\vspace{0.5em}

{\bf Claim:}  $\forall n \geq 1,\; f^{3n} (x, y)= (x_{3n}, y_{3n})$ with
$x_{3n} < x_{3(n-1)}< -1$ and  $y_{3n} < y_{3(n-1)}<-1$.

\vspace{0.5em}

Indeed, we have
\begin{eqnarray}
\label{hg}
y_{n+3}= (x_ {n} y_{n} +c)x_{n}+c,\; x_{n+3}= y_{n+3}(x_ {n} y_{n} +c)+c,\; \forall n \geq 0.
\end{eqnarray}

If $\max (x_n, y_n) <-1$, then $x_n y_n + c > -y_n +c>0$. Thus $(x_n y_n + c)x_n  < y_n -c$.
Hence $y_{n+3} < y_n$.
On the other hand,
\[
 x_n y_n + c > -x_n + c \textrm{ and } (x_n y_n + c) y_n+ c <  x_n <-1.
 \]
Hence $ x_{n+3} < (x_n-c)+ c= x_n.$
Then, the claim holds.

If $f^{n}(x, y)$ is bounded, then the sequences $(x_{3n})_{n \geq 0}$ and $(y_{3n})_{n \geq 0}$ are convergent.
Let $l= \lim x_{3n} < -1$ and $l'=  \lim y_{3n} < -1$.
By (\ref{hg}), we deduce that
$$(ll'+c)l+c= l',\;  (ll'+c)l'+c= l.$$
Hence $(ll'+c+1) (l-l')=0$. Since $ll'+c+1>0$, we have
 $l=l'$, then $l^3+(c-1)l+c=0$.
Thus $l \in \{-1, a_1, a_2\}$, which is absurd, since $l <-1$.
Hence $f^{3n} (x, y)$ converges to $(-\infty, -\infty)$.
Thus $\lim f^{3n+1} (x, y)=(+\infty, -\infty)$ and $\lim f^{3n+2} (x, y)=(-\infty, +\infty)$.

\vspace{0.5em}

{\it Case 2.2}: $\max\{x, y\} = -1$ and $\min\{x, y\} < -1$.

Thus as above, $x_3 < x \leq -1$ and $y_3 < y \leq -1$, and as in case 2.1, we are done.
In the  cases  $(x, y) \in M$ or  $(x, y) \in P$, we are done because of Proposition~\ref{prop:creala}~a) 
and the fact that b) is true if $(x, y) \in N\setminus \{p\}$.

\medskip\noindent
c) For the proof of item c), it suffices to prove that $\mathbb{R}^2 \setminus K^{+} \subset \bigcup_{n=0}^{+\infty} f^{-n}(S) $.
This comes from the fact that for a real number $R>0 $ sufficiently large, we have $\mathbb{C}^2 \setminus K^{+}=  \bigcup_{n=0}^{+\infty} f^{-n} (V_R)$ (see item 1 of Proposition \ref{prop:Kmais0}). If we choose by  $R > 1+c$, then $V_R\subset S$ and we are done.
\end{proof}

\begin{obs}
\begin{enumerate}[a)]
\item
If $(x, y) \in S$ and we use the notation $f^{n} (x, y)= (x_n, y_n) $ for $n \geq 0$, then $\min \{\vert x_n \vert , \vert y_n \vert\}$ diverges to $\infty$ as $n$ goes to $+\infty$.
\item
If $-1 < c <0$, then Proposition \ref{prop:creala} is true and the proof is the same. In this case $\frac{1 - \sqrt{5}}{2} < a_1 < 0$ and $1 < a_2 < \frac{1 + \sqrt{5}}{2}$.
\end{enumerate}
\end{obs}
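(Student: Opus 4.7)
\medskip
\noindent\textbf{Proposal for proof of the Remark.}

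The plan is to revisit each case in the proof of Proposition~\ref{prop:creala}~b) and read off the stronger conclusion, then separately check that every inequality used there carries over to $-1<c<0$.

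For part a), I would not re-do the bookkeeping, only track what was proved about \emph{both} coordinates in each case. In Case~1 ($(x,y)\in L\setminus\{\theta\}$), the claim already produces $x_{2n},y_{2n}>d_n a_2$ with $d_n\to+\infty$, and the odd-index estimates $x_{2n+1}=y_{2n+2}>d_{n+1}a_2$, $y_{2n+1}=x_{2n}>d_na_2$ show $\min\{|x_n|,|y_n|\}\to\infty$. In Case~2 ($(x,y)\in N\setminus\{p\}$), the proof concludes $f^{3n}(x,y)\to(-\infty,-\infty)$; applying $f$ once more gives $f^{3n+1}(x,y)=(x_{3n}y_{3n}+c,\,x_{3n})\to(+\infty,-\infty)$ and one more iterate gives $(-\infty,+\infty)$, so all three subsequences have $\min\{|x_n|,|y_n|\}\to\infty$. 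For $(x,y)\in M$ (resp.\ $P$), item a) of Proposition~\ref{prop:creala} puts $f(x,y)\in N$ (resp.\ $f^2(x,y)\in N$), reducing to the case already handled. Thus in every case $\min\{|x_n|,|y_n|\}\to\infty$.

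For part b), I would check that every inequality in the proof of Proposition~\ref{prop:creala} only uses: (i) $c>-1$, so that $1+c>0$, (ii) $a_2\geq 1/2$, and (iii) the polynomial $l^3+(c-1)l+c$ has no real root strictly less than $-1$. For $-1<c<0$ one has $1-4c\in(1,5)$, hence $\sqrt{1-4c}\in(1,\sqrt{5})$, giving $\tfrac{1-\sqrt{5}}{2}<a_1<0$ and $1<a_2<\tfrac{1+\sqrt{5}}{2}$, so (ii) holds strictly. Item a) of Proposition~\ref{prop:creala} (the invariance $f(L)\subset L$, $f(M)\subset N$, $f(N)\subset P$, $f(P)\subset M$) rests on $a_2^2+c=a_2$ (definition of $a_2$) and $1+c>0$, both valid; the rectangles $\mathcal R_1,\mathcal R_2$ are still nondegenerate because $1+c>0$ and $a_2>0$. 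In the inductive claim of Case~1 the arithmetic uses only $a_2\geq 1/2$, and the fixed points of $g(x)=a_2(x^2-1)+1$ are still $1$ and $1/a_2-1\leq 1$, so the monotone sequence $(d_n)$ must diverge. In Case~2.1 the identity $l^3+(c-1)l+c=0$ has roots $\{-1,a_1,a_2\}$; since now $a_1>-1$ and $a_2>1$, the only real root $\leq -1$ is $-1$ itself, giving the contradiction needed. Finally, item c) uses Proposition~\ref{prop:Kmais0} with $R>1+c$, and the inclusion $V_R\subset S$ holds for such $R$ independently of the sign of $c$.

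The only mildly delicate point is (iii): for $c=0$ the cubic degenerates to $l(l^2-1)$, whose roots include $l=-1$ but nothing strictly below, and for $-1<c<0$ a direct check of the three roots $-1,a_1,a_2$ (and the ranges just computed) confirms none lies in $(-\infty,-1)$. Beyond that, each estimate is a straightforward rewriting, and part a) then follows from the case-by-case tracking above, so the remark reduces to these verifications.
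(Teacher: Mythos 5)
Your proposal is correct and matches the paper's intent exactly: the paper offers no separate proof of this remark, and your case-by-case rereading of Proposition~\ref{prop:creala}~b) (tracking both coordinates in Cases 1 and 2, reducing $M$ and $P$ to $N$) together with the verification that every inequality only needs $1+c>0$, $a_2\geq 1/2$, and the root analysis of $l^3+(c-1)l+c$ is precisely the ``same proof'' the authors have in mind. One tiny adjustment for part b): when $-1<c<0$ one has $a_2>1>1+c$, so to guarantee $V_R\subset S$ in item c) you should take $R>a_2$ (or simply $R>\max\{a_2,1+c\}$) rather than merely $R>1+c$.
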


\subsubsection {Dynamics of $f$ inside ${\cal D} \cap  K^{+}$}

The dynamics can be analyzed in more detail by introducing a finer partition on the set 
$\mathcal{D}$, and studying the itineraries of the points in $K^+$ in relation to this partition.

\begin{minipage}{0.5\textwidth}
\begin{figure}[H]
\includegraphics[scale=0.5]{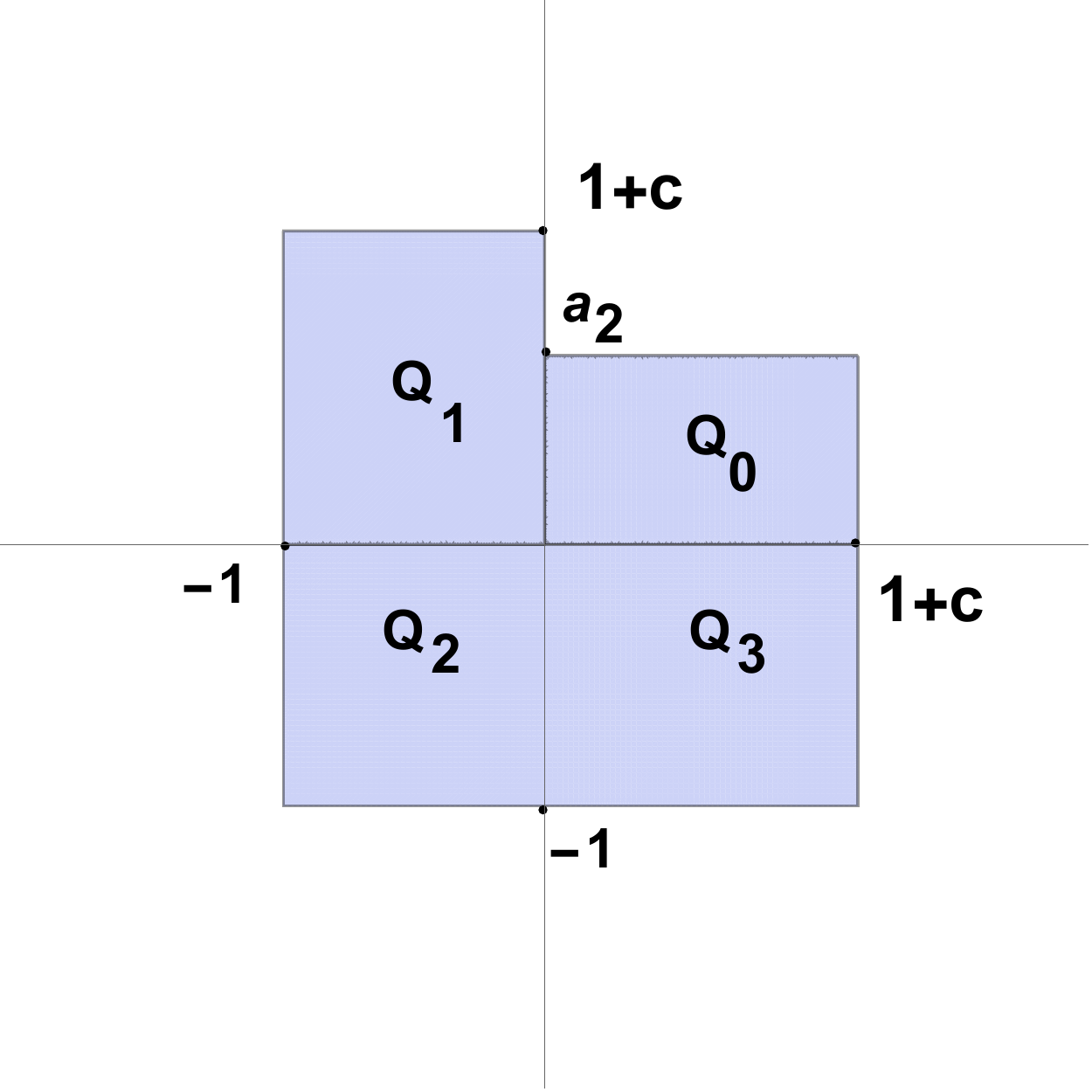}
\caption{Rectangles  $Q_0,Q_1,Q_2,Q_3$}
\end{figure}
\end{minipage} \hfill
\begin{minipage}{0.43\textwidth}

Therefore let us introduce the following rectangular regions:
$$Q_0= [0, 1+c] \times [0, a_2],\; Q_1= [-1, 0] \times [0, 1+c ],$$
$$Q_2= [-1, 0] \times [-1, 0],\; Q_3= [0, 1+c] \times [-1, 0 ],$$
$$A= [0,a_2] \times [a_2, +\infty[,\;   H= [1+c,+\infty[ \times [0, a_2].$$

\end{minipage}

\vspace{0.3cm}

\begin{prop}
\label{hj}
$\bigcup_{n=0}^{+\infty} f^{-n} ({\cal D} \cap  K^{+}) \subset W^{s}(\alpha) \cup W^{s}(\theta) $.
\end{prop}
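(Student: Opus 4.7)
The plan is to reduce the assertion to the inclusion
\[
\mathcal{D} \cap K^{+} \subset W^{s}(\alpha) \cup W^{s}(\theta).
\]
Both stable manifolds satisfy $f^{-1}(W^{s}(p)) = W^{s}(p)$, directly from the characterization $W^{s}(p) = \{z : f^{n}(z) \to p\}$, so pulling the reduced inclusion back by $f^{-n}$ yields the proposition. I would then fix $z \in \mathcal{D} \cap K^{+}$ and prove that $(f^{n}(z))_{n}$ converges either to $\alpha$ or to $\theta$.

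The first task is to work out how $f$ acts on the partition $\{Q_{0},Q_{1},Q_{2},Q_{3}\}$ of $\mathcal{D}$ and on the auxiliary regions $A$ and $H$. Elementary inequalities using $0 < c < \tfrac{1}{4}$ and $0 < a_{1} < \tfrac{1}{2} < a_{2} < 1+c$ give $f(Q_{2}) \subset Q_{3}$, $f(Q_{1}) \subset Q_{2} \cup Q_{3}$, together with $f(Q_{3}),\ f(Q_{0}) \subset \mathcal{D} \cup A \cup H$. I would then show that $A$ and $H$ are \emph{escape sets}: after a bounded number of iterates they land in the region $L$ of Proposition~\ref{prop:creala}, from which all orbits diverge to infinity. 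Since $z \in K^{+}$, its forward orbit must avoid $A \cup H$ and therefore stays in $\mathcal{D}$.

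The next step is to show that this trapped orbit eventually enters $Q_{0}$ and never leaves. By the transition structure, any orbit in $\mathcal{D}$ that avoids $Q_{0}$ indefinitely must cycle within $\{Q_{1},Q_{2},Q_{3}\}$. Proposition~\ref{K1} and the remark after it exclude $2$- and $3$-cycles in the interior of $\mathcal{D}$ (the only $3$-cycle of $f_{c}$ has all three points on $\partial\mathcal{D}$). To rule out attractors of higher period, or aperiodic recurrence, I would establish a uniform expansion estimate for $Df^{3}_{c}$ along every admissible concatenation of transitions through $\{Q_{1},Q_{2},Q_{3}\}$, using the explicit form of the Jacobian $J(x,y) = \bigl(\begin{smallmatrix} y & x \\ 1 & 0 \end{smallmatrix}\bigr)$ and lower bounds on $|x|,|y|$ valid on these cells. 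This is the technical core of the argument.

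Once the orbit is trapped in $Q_{0}$, which contains only $\alpha$ (attracting) and $\theta$ (saddle) as fixed points and, by the previous step, no further periodic orbits, a standard application of the local stable manifold theorem at $\theta$ shows that the orbit either enters the basin of $\alpha$ (giving $z \in W^{s}(\alpha)$) or lies on $W^{s}(\theta)$. The main obstacle, in line with the authors' warning that the argument is ``quite delicate and technical'', is precisely the expansion estimate for $f^{3}$ in the preceding paragraph: ruling out hidden high-period attractors supported on itineraries through $Q_{1} \to Q_{2} \to Q_{3}$ requires careful quantitative control of the Jacobian along every admissible concatenation.
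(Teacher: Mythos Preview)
Your reduction to $\mathcal{D}\cap K^{+}\subset W^{s}(\alpha)\cup W^{s}(\theta)$ is fine, but the core of the argument goes wrong at the step where you declare $A$ and $H$ to be escape sets. They are not. From $f(A)\subset Q_{0}\cup H$ (Lemma~\ref{insid}) one sees that a point of $A$ may return to $Q_{0}$, and in fact this happens systematically: Proposition~\ref{prop:r0a}b) shows that whenever $z\in Q_{0}\cap K^{+}$ has $f(z)\in A$, one gets $f^{2}(z)\in Q_{0}$ again, and an orbit may alternate $Q_{0}\to A\to Q_{0}\to A\to\cdots$ forever. Those orbits are precisely the ones converging to $\theta$. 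So your conclusion that a $K^{+}$ orbit ``must avoid $A\cup H$ and therefore stays in $\mathcal{D}$'', and that it ``eventually enters $Q_{0}$ and never leaves'', is false and in fact would discard all of $W^{s}(\theta)\cap\mathcal{D}$. The paper does not try to trap the orbit in $\mathcal{D}$; it allows excursions into $A$ and analyses the $Q_{0}\leftrightarrow A$ oscillation directly.

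Your proposed mechanism for ruling out recurrence inside $Q_{1}\cup Q_{2}\cup Q_{3}$ is also problematic and quite different from what the paper does. A uniform expansion estimate for $Df^{3}$ along admissible words in $\{Q_{1},Q_{2},Q_{3}\}$ cannot hold: the $3$-cycle $\{p,f(p),f^{2}(p)\}$ is a saddle lying on the boundary of these very rectangles, so $Df^{3}$ has a contracting eigendirection there, and nearby points share this lack of expansion. The paper instead exploits a monotonicity argument: it shows (Proposition~\ref{r1}c) that along the successive returns to $Q_{3}$, with return times in $\{2,3\}$, the first coordinate is non-increasing and the second non-decreasing; the limit would then have to solve explicit algebraic equations whose only relevant solution is $f(p)=(1+c,-1)$, and this is excluded because the monotone limits satisfy $a<1+c$, $b>-1$. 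The same device (coordinatewise monotonicity along a well-chosen subsequence, followed by identification of the limit among finitely many algebraic candidates) is what drives Proposition~\ref{prop:r0a} in $Q_{0}\cup A$ as well. Replacing this by hyperbolic estimates is not a matter of taste here: the presence of the saddle $3$-cycle on $\partial\mathcal{D}$ genuinely obstructs the expansion route.
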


We will need the following simple fact:

\begin{lem}
\label{ca}
$ (1+c)a_2 \leq 1$.
\end{lem}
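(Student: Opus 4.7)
The plan is to use the fixed-point equation satisfied by $a_2$ to reduce the inequality to a polynomial identity in a single variable. Recall that $a_2 = \frac{1+\sqrt{1-4c}}{2}$ is a root of $x^2 - x + c = 0$, so
\[
c = a_2(1 - a_2).
\]
This is the only algebraic relation needed; substituting it eliminates $c$ entirely from the inequality $(1+c)a_2 \leq 1$.

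Next, I would compute $1 - (1+c)a_2$ after this substitution:
\[
1 - (1+c)a_2 \;=\; 1 - a_2 - c\,a_2 \;=\; 1 - a_2 - a_2^{2}(1 - a_2).
\]
Factoring $(1-a_2)$ gives
\[
1 - (1+c)a_2 \;=\; (1 - a_2)\bigl(1 - a_2^{2}\bigr) \;=\; (1 - a_2)^{2}(1 + a_2).
\]
This expression is manifestly nonnegative, since for $0 < c < 1/4$ one has $\tfrac{1}{2} < a_2 < 1$, so $(1-a_2)^{2} > 0$ and $1 + a_2 > 0$. Hence $(1+c)a_2 < 1$, which in particular proves the claimed inequality (with equality only in the limit $c \to 0$, where $a_2 \to 1$).

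There is no genuine obstacle here: the whole proof is a one-line manipulation once one recognizes that $c$ should be eliminated via the quadratic relation $a_2^2 = a_2 - c$. The only point requiring minimal care is confirming the sign of the factors, which follows immediately from the range $a_2 \in (1/2, 1)$ valid throughout the parameter interval $0 < c < 1/4$.
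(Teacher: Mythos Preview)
Your proof is correct. The paper takes a slightly different algebraic route: it substitutes the explicit formula $a_2 = \frac{1+\sqrt{1-4c}}{2}$ into $(1+c)a_2 \leq 1$, squares away the radical, and reduces the inequality to $c^{2}(c+2)\geq 0$, which holds since $c\geq -2$. You instead eliminate $c$ via the fixed-point relation $c = a_2(1-a_2)$ and obtain the factorization $1-(1+c)a_2 = (1-a_2)^{2}(1+a_2)\geq 0$. Both arguments are one-line computations; yours has the mild advantage of avoiding any manipulation of the square root and of making the equality case (namely $a_2=1$, i.e.\ $c=0$) transparent, while the paper's version is phrased directly in the parameter $c$ and so immediately exhibits the full range $c\geq -2$ on which the inequality holds.
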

\begin{proof}
$a_2 (1+c)= \frac{1+ \sqrt{1+4c}}{2} (1+c) \leq 1$ if and only if $c^2(c+2) \geq 0$.
Since $c \geq -2$, we obtain the result.
\end{proof}

\begin{obs}

As a consequence of the previous Lemma, we have
 $c (1+c) <1$ and $a_2 c <1$.

\end{obs}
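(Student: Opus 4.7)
The plan is to derive both inequalities as immediate consequences of the previous Lemma $(1+c)a_2 \leq 1$, using only the closed form $a_2 = \frac{1+\sqrt{1-4c}}{2}$ and the fact that throughout the parameter range $0 < c < 1/4$ one has $a_2 \in (1/2, 1)$. In particular the key auxiliary observation is the strict comparison $c < a_2$, which follows from $c < 1/4 < 1/2 < a_2$.

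For the second inequality $a_2 c < 1$, I would simply expand the Lemma: writing $(1+c)a_2 = a_2 + a_2 c$, the bound rearranges to $a_2 c \leq 1 - a_2$. Since $a_2 > 1/2 > 0$, the right-hand side is strictly less than $1$, yielding $a_2 c < 1$.

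For the first inequality $c(1+c) < 1$, I would multiply the strict comparison $c < a_2$ by the positive factor $(1+c)$ and then chain with the Lemma:
\[
c(1+c) < a_2(1+c) \leq 1,
\]
which gives the result.

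There is essentially no obstacle; the only ingredient beyond the Lemma itself is the elementary estimate $a_2 > 1/2$, which is visible directly from the formula for $a_2$ when $0 < c < 1/4$.
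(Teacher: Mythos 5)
Your proof is correct, and since the paper states this remark without any explicit proof (merely asserting it as a consequence of Lemma~\ref{ca}), your derivation --- chaining $c<a_2$ with $(1+c)a_2\leq 1$ for the first inequality and rearranging $a_2+a_2c\leq 1$ for the second --- is exactly the intended elementary argument. No issues.
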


The dynamics induced on the partition can be described as follows:

\begin{lem}
\label{insid}
The following inclusions hold:
\begin{enumerate}[a)]
\item
$f(Q_0) \subset Q_0 \cup A \cup L$.
\item
$f(Q_1 ) \subset Q_2 \cup Q_3$.
\item
$f(Q_2 ) \subset Q_3$.
\item
$f(Q_3 ) \subset Q_0 \cup Q_1 \cup A$.
\item
$f(A) \subset Q_0 \cup H$.
\item
 $f(H ) \subset A \cup L$.
\end{enumerate}
\end{lem}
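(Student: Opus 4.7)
The plan is to establish each of the six inclusions (a)--(f) by substituting the defining inequalities of the source rectangle into the formula $f(x,y)=(xy+c,x)$. Since the image's second coordinate is just $x$, the horizontal extent of the source rectangle automatically selects which target rectangles are vertically reachable; only the first coordinate $xy+c$ requires genuine estimation. Throughout I will use $0<c<1/4$, $1/2<a_2<1<1+c$, and crucially Lemma~\ref{ca}, which states $(1+c)a_2\le 1$.

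I would dispatch the easy inclusions first. For (c), on $Q_2$ one has $xy\in[0,1]$, so $xy+c\in[c,1+c]\subset[0,1+c]$ and $x\in[-1,0]$, placing the image in $Q_3$. For (e) and (f), the condition $x\in[0,a_2]$ (resp.\ $x\in[1+c,+\infty[\subset[a_2,+\infty[$) fixes the image's second coordinate in the common vertical range of $Q_0\cup H$ (resp.\ $A\cup L$); then $xy+c\ge 0$, and its dichotomy at $1+c$ (resp.\ $a_2$) decides between the two targets. For (b), $xy\in[-(1+c),0]$ gives $xy+c\in[-1,c]$ while $x\in[-1,0]$, so the sign of $xy+c$ distinguishes $Q_2$ from $Q_3$.

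For (d), on $Q_3$ I have $x\ge 0$, hence the image's second coordinate is nonnegative and $Q_2$ is automatically excluded. The first coordinate $xy+c\in[-1,c]$ splits via sign: when $xy+c\le 0$ the image lies in $[-1,0]\times[0,1+c]=Q_1$; when $xy+c\in[0,c]\subset[0,a_2]$ (using $c<1/4<a_2$), splitting the image's second coordinate at $a_2$ yields $Q_0$ for $x\in[0,a_2]$ and $A$ for $x\in[a_2,1+c]$.

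The only genuinely nontrivial inclusion is (a), which is the main obstacle. Here $(x,y)\in Q_0$ gives $0\le xy\le (1+c)a_2$, and I critically need $xy+c\le 1+c$, i.e.\ $(1+c)a_2\le 1$, which is precisely the content of Lemma~\ref{ca}. Once this bound is in hand, splitting on $x\le a_2$ or $x\ge a_2$ produces the decomposition: $x\in[0,a_2]$ lands the image in $Q_0$, while $x\in[a_2,1+c]$ puts the second coordinate in $[a_2,+\infty[$, and a further split of the first coordinate at $a_2$ separates $A$ from $L$. Without Lemma~\ref{ca} the image could overshoot into $[1+c,+\infty[$, so its role is essential precisely here.
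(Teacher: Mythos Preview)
Your proof is correct and follows essentially the same approach as the paper: both compute the bounding rectangle of $f(X)$ from the defining inequalities of each source region $X$ and then identify which targets cover it, with Lemma~\ref{ca} invoked precisely in case~(a) to bound $(1+c)a_2+c\le 1+c$. Your write-up makes the case-splits between target rectangles slightly more explicit than the paper's, but the substance is identical.
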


\begin{proof}
\begin{enumerate}[a)]
\item Let $z \in Q_0  = [0, 1+c] \times [0,a_2]$.
Then  $f(z) \in [c, a_2 (1+c)+ c]\times [0, 1+c] $. By Lemma \ref{ca}, we have $a_2 (1+c) \leq 1$. 
From this we deduce that
$$f(Q_0) \subset [c, 1+ c]\times [0, 1+c] \subset Q_0 \cup A \cup L.$$

\item $f(Q_1 ) \subset [-1, c] \times [-1, 0] \subset Q_2 \cup Q_3$.

\item $f(Q_2 ) \subset [c, c+1] \times [-1, 0] \subset Q_3$.

\item $ f(Q_3 ) \subset [-1, c] \times [0, 1+c] \subset Q_0 \cup Q_1 \cup A$.

\item $f(A) \subset [0, +\infty[ \times [0, a_2] \subset Q_0 \cup H$.

\item $ f(H ) \subset [c, +\infty] \times [1+c, +\infty] \subset A \cup L$.
\end{enumerate}
\end{proof}

By Lemma~\ref{insid}, we see that if the orbit of $z= (x,y)$ enters in $\mathcal D \cap K^{+}$, then it must enter $Q_0$ or $Q_3$. In the first case, either the orbit stays in $Q_0$ 
(and then $\lim f^n(z)= \alpha$), or the orbit alternates between $Q_0$ and $A$ (and then 
$\lim f^n(z)= \theta$, see Proposition~\ref{prop:r0a}).
Now if the orbit enters $Q_3$ without ever entering $Q_0$, we will show 
(Proposition~\ref{r1}) that there exists a subsequence $n_k$ such that 
$f^{n_k}(x,y)= (x_{n_k}, y_{n_k}) \in Q_{3}$ with the property that $n_0=0, n_{k+1}- n_k \in \{2, 3\}$ and such that $x_{n_k}$ is decreasing and $y_{n_k}$ is increasing, which leads to a contradiction.

With this preliminary analysis of the induced dynamics, we can now determine the fate of points in $Q_0 \cap K^+$ under forward iteration:

\begin{prop}
\label{prop:r0a}
Let $z \in Q_0 \cap K^{+} \setminus \{\theta\}$, then the following properties are valid:
\begin{enumerate}[a)]
\item
If $f(z) \in Q_0$, then $f^n (z) \in Q_0$ for all integers $n \geq 2$. In this case $\lim f^{n}(z)= \alpha$.
\item
If $f(z) \in A$, then $f^2(z) \in Q_0$ and  $z$ satisfies one of the following properties:
\begin{enumerate}[b.i)]
\item
There exists an integers $N$ such that for all $n \geq N,\; f^{n}(z) \in Q_0$. In this case $\lim f^{n}(z)= \alpha$.
\item
For all integers $n,\; f^{2n}(z) \in Q_0$ and $f^{2n+1}(z) \in A$. In this case $\lim f^{n}(z)= \theta$.
\end{enumerate}
\end{enumerate}

\end{prop}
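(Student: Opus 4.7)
The strategy is to treat each case via coordinate-wise iteration inside carefully chosen forward-invariant trapping regions that concentrate the dynamics near the relevant fixed points.

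For part (a), the key observation is that if both $z=(x_0,y_0)\in Q_0$ and $f(z)=(x_1,y_1)\in Q_0$, then the condition $y_1=x_0\le a_2$ (from $f(z)\in Q_0$) combined with $y_0\le a_2$ (from $z\in Q_0$) places $z$ inside the smaller square $\Sigma:=[0,a_2]^2$. Using $a_2^2+c=a_2$ (the fixed-point equation), a direct computation gives $f(\Sigma)\subset\Sigma$, so by induction $f^n(z)\in\Sigma\subset Q_0$ for every $n\ge 0$. For the convergence $f^n(z)\to\alpha$, I first show $W^s(\theta)\cap\Sigma=\{\theta\}$: the stable eigenvector of $Df(\theta)$ is $(\lambda_s,1)$ with $\lambda_s<0$, so both branches of $W^s_{\mathrm{loc}}(\theta)$ immediately leave $\Sigma$, and since $\theta$ is its own unique $f$-preimage, no global $W^s(\theta)$-point inside $\Sigma$ other than $\theta$ can have its orbit return to a neighborhood of $\theta$. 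I then introduce $M_n:=\max(x_n,y_n)$, which satisfies $M_{n+1}\le\max(M_n^2+c,M_n)$; the inequality $t^2+c\le t$ holds exactly on $[a_1,a_2]$, so $M_n$ is eventually non-increasing with limit in $\{a_1,a_2\}$. The possibility $M_n\to a_2$ would force $\omega(z)$ to touch $\theta$, which contradicts $W^s(\theta)\cap\Sigma=\{\theta\}$ when $z\ne\theta$. Hence $M_n\to a_1$, and the local attracting-basin structure at $\alpha$ finishes the convergence.

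For part (b), combining $f(z)\in A$ with $z\in Q_0$ gives $a_2\le x_0\le 1+c$ and $x_0y_0+c\le a_2$. I then compute $f^2(z)=\bigl((x_0y_0+c)x_0+c,\; x_0y_0+c\bigr)$: the second coordinate is in $[0,a_2]$ by hypothesis, and the first is at most $(1+c)a_2+c\le 1+c$ by Lemma~\ref{ca}, so $f^2(z)\in Q_0$. The dichotomy (b.i) vs (b.ii) is obtained by applying part (a) to $w:=f^2(z)$: either $f(w)\in Q_0$, in which case part (a) yields $f^n(z)\to\alpha$ (this is case (b.i)), or $f(w)=f^3(z)\in A$ and the recursion continues. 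If the alternation $Q_0,A,Q_0,A,\dots$ persists forever, the even iterates $g^n(z)=f^{2n}(z)$ (with $g:=f^2$) stay in the compact set $\Sigma':=\{(x,y)\in Q_0:\; x\ge a_2,\; xy+c\le a_2\}$, which contains $\theta$ but not $\alpha=(a_1,a_1)$ since $a_1<a_2$. By the remark following Proposition~\ref{K1}, the map $f$ has no $2$-cycles besides its fixed points, so the only fixed point of $g$ inside $\Sigma'$ is $\theta$. A monotonicity analysis for $g$ in $\Sigma'$ analogous to the $M_n$-argument of part (a) then forces $\omega_{g}(z)=\{\theta\}$, so $f^{2n}(z)\to\theta$ and, by continuity, $f^{2n+1}(z)\to f(\theta)=\theta$.

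The main obstacle is pinning down the $\omega$-limit as a single fixed point in each convergence claim: the local saddle picture at $\theta$ combined with forward-invariance of $\Sigma$ (resp.\ $\Sigma'$) localizes the dynamics, but showing that no higher-period invariant sets exist in these trapping regions requires combining the Lyapunov-type control on $M_n$ with the low-period exclusions from Proposition~\ref{K1} and the remark after it. The explicit two-step recurrence $x_{n+1}=x_nx_{n-1}+c$ obtained by eliminating $y$ is what makes these exclusions tractable, but they constitute the technical heart of the argument.
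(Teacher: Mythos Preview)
Your setup for part~(a) is fine: the reduction to the forward-invariant square $\Sigma=[0,a_2]^2$ is exactly what the paper does, and your argument that $W^s(\theta)\cap\Sigma=\{\theta\}$ (via the negative-slope stable eigendirection at the corner $\theta$ together with the fact that $\theta$ is its own unique $f$-preimage) is correct and rather elegant. The problem is the convergence step that follows.

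Your claim that $M_n=\max(x_n,y_n)$ is ``eventually non-increasing with limit in $\{a_1,a_2\}$'' is not justified and is in fact false as stated. From $M_{n+1}\le\max(M_n^2+c,M_n)$ you get $M_{n+1}\le M_n$ \emph{only} when $M_n\in[a_1,a_2]$. If the orbit starts (or falls) below $a_1$, then $M_n^2+c>M_n$ and your inequality gives no monotonicity; e.g.\ for $c=0.2$ and $(x_0,y_0)=(0.1,0.1)$ one has $M_0=0.1<M_1=0.21<M_2=0.221<\cdots$, increasing toward $a_1$. Even in the regime $M_n\ge a_1$ where $M_n$ is genuinely non-increasing and converges to some $L\in[a_1,a_2]$, passing your one-sided inequality to the limit yields only $L\le L$; nothing forces $L\in\{a_1,a_2\}$. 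What is missing is a companion lower control (on $m_n=\min(x_n,y_n)$, or directly on $x_n$) that pins the $\omega$-limit to a fixed point. Your appeal to Proposition~\ref{K1} and the no-$2$-cycle remark cannot fill this gap: those results exclude periods $2$ and $3$ only, and say nothing about higher-period orbits or aperiodic invariant sets in $\Sigma$. The same gap recurs in your treatment of (b.ii), which you defer to an ``analogous'' argument.

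The paper closes this gap by a different, more hands-on route: it splits $\Sigma$ into four regions according to the position of $(x_0,y_0)$ relative to $(a_1,a_1)$ and, in each case, builds explicit \emph{two-sided} monotone trapping sequences. For instance, when both coordinates are below $a_1$ it shows $\min(x_{2n},y_{2n})\ge d_n a_1$ with $d_{n+1}=a_1(d_n^2-1)+1$ and $d_n\nearrow 1$; when both are above $a_1$ it shows $\max(x_{2n},y_{2n})\le e_n a_1$ with $e_n\searrow 1$. The mixed cases are reduced to these by showing $x_{2n}$ and $y_{2n}$ are monotone. For part~(b.ii) the paper shows directly that $x_{2n}$ is decreasing and $y_{2n}$ increasing (using $x_{2n}\ge a_2\ge x_{2n+1}$), so the even iterates converge to a fixed point of $f^2$, which by the no-$2$-cycle remark and the location must be $\theta$. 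Your outline would become a valid alternative if you supplied the missing lower bound (and handled the $M_n<a_1$ case), but as written the ``technical heart'' you identify is precisely where the argument breaks.
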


\begin{proof}
Let $z= (x_0, y_0) \in Q_0= [0,1+c] \times [0, a_2] $. If $f(z)\in Q_0$, then $z \in [0, a_2] \times [0, a_2]$.
Hence  $f^n (z) \in   [0, a_2] \times [0, a_2] \subset Q_0$ for all integers $n \geq 0$.

\vspace{0.5em}

{\bf Claim}: ${\lim f^{n}(z)= \alpha= (a_1, a_1)}$.

\vspace{0.5em}

{\it Case 1}: ${\min \{x_0, y_0\} < a_1 \textrm{ and } \max \{x_0, y_0\}  \leq a_1}$.

Assume first that $x_0 < a_1$ and $y_0 <a_1$. Observe that $x_1 < a_1 ^2+ c= a_1$ and $y_1= x_0 <a_1$.
Hence $f^{n}(z) \in [0, a_1[ \times [0, a_1[$ for all $n \geq 0$.
There exists a positive real number {\bf $0 \leq d_0 < 1$ } such that $\min\{x_0,y_0\} \geq d_0 a_1$.
Thus $x_1= x_0 y_0+ c \geq d_1 a_1$ where $d_1= g(d_0)$ and $g(x)= a_1 (x^2 -1) +1$.
Since $d_0 < 1$, we deduce that $d_0 < d_1$.

On the other hand, we have $y_2 = x_1 \geq d_1 a_1$ and $x_2 = x_1 x_0+ c \geq d_1 a_1.$
Hence we deduce by induction that $f^{2n}(x_0, y_0)= (x_{2n}, y_{2n})$ satisfies $\min\{x_{2n},y_{2n}\} \geq d_n a_1$, where $d_n$ is an increasing sequence satisfying $d_n = g(d_{n-1})$ for all integers $n \geq 1$.
Thus $\lim d_n \in \{1, \frac{1}{a_1} -1\}$. Since $d_n <1$ for all $n$ and
$ \frac{1}{a_1} -1 = \frac{ a_2 }{a_1} >1$,
we deduce that $\lim d_n= 1$.
Thus  $f^{2n}(x_0, y_0)$ converges to $\alpha$, and hence  $f^{n}(x_0, y_0)$ converges to $\alpha$.

Now if $x_0 < a_1,\; y_0 = a_1$ , then $x_1= x_0 <a_1$ and $y_1= x_0 <a_1$ and we are done.
If $x_0 = a_1,\; y_0  < a_1$ , then $x_2 <a_1$ and $y_2= x_1 <a_1$ and we are also done.

\vspace{0.5em}

{\it Case 2}: ${\min\{x_0,y_0\}  \geq a_1 \textrm{ and }\max (x_0, y_0)  > a_1}$.

As in Case 1, we can assume
 that $x_0 > a_1$ and $y_0 > a_1$.  Hence  $f^{n}(z) \in ]a_1, +\infty[ \times [a_1, +\infty[$ for all $n \geq 0$.
There exists a positive real number $e_0 > 1$ such  that $\max (x_0, y_0) \leq e_0 a_1$ where $1< e_0 < \frac{a_{2}}{a_{1}}.$
Thus $x_1= x_0 y_0+ c \leq e_1 a_1$ where $e_1= g(e_0)$.
Since $1 < e_0 < a_2 / a_1$, it is easy to see that $e_1 < e_0$.
We deduce as in case 1) that $\max \{x_{2n}, y_{2n}\} \leq e_n a_1$, where $e_n$ is a decreasing sequence satisfying $e_n = g (e_{n-1})$ for all integers $n \geq 1$ with $e_0 >1$.
Thus $\lim e_n \in \{1, \frac{1}{a_1} -1\}$. Since, for all integers $n \geq 0,\;  e_n  \leq e_0 < \frac{a_{2}}{a_{1}}= \frac{1}{a_1}-1$, we deduce that $\lim e_n= 1$.
Hence  $f^{2n}(x_0, y_0)$ converges to $\alpha$, and therefore  $f^{n}(x_0, y_0)$ converges to $\alpha$.

\medskip
{\it Case 3}: ${x_0 \geq a_1,\; y_0 \leq a_1 \textrm{ and }(x_0, y_0) \ne (a_1, a_1)}$.

Assume that $x_0 > a_1$ and $y_0 < a_1$.
If $x_1 \geq a_1$, then $\min\{x_1,y_1\} = \min\{x_1, x_0\} \geq a_1 $ and $\max\{x_1, y_1\}>a_1$.
Then by Case 2, we are done.

Now, suppose that $x_1= x_0 y_0 +c < a_1$.
We can also assume that $x_2= x_0 x_1+ c >a_1$ and $x_3= x_1 x_2+ c < a_1,$ otherwise,
if $x_2 \leq a_1$, then we are done as in Case 1.
If $x_2 > a_1$ and $x_3 \geq a_1$, then we are done  as in Case 2.

On the other hand, since $x_1= x_0 y_0 +c < a_1 <x_2= x_0 x_1+ c$, we deduce that
$y_0 < x_1= y_2$.
Since
$x_3= x_1 x_2 +c < a_1 <x_2= x_0 x_1+ c$, we obtain that
$x_2 < x_0$.
Then, we deduce by induction that $(x_{2n})$ is decreasing and $(y_{2n})$ is increasing.
Let $l= \lim x_{2n}$ and $l'= \lim y_{2n}$.
Then
 \begin{eqnarray}
\label{l0}
 x_{2n+2}= x_{2n} (x_{2n} y_ {2n}+ c)+ c  \mbox { and } y_{2n+2}= x_{2n} y_{2n}+ c >1.
 \end{eqnarray}
  We deduce by (\ref{l0}), that
$l=l' \in \{a_1, a_2\}$ . Since $y_{2n} <a_1$ for all integers $n \geq 0$, then $l=a_1$. Hence
 $f^{n}(x_0, y_0)$ converges to $\alpha$.

\medskip
{\it Case 4}: ${x_0 \leq a_1,\; y_0 \geq a_1 \textrm{  and }(x_0, y_0) \ne (a_1, a_1)}$.

Suppose that $x_0 < a_1$ and $y_0 > a_1$.
Then $y_1= x_0 < a_1$  and $ x_1 < a_1$ or $x_1 \geq a_1$.
In both cases, we are done because of Cases 1 or 3.
Hence we obtain the claim and Proposition \ref{prop:r0a}~a).

\vspace{0.5em}

Now, suppose that $z= (x, y) \in Q_0= [0, 1+c] \times [0, a_2]$ and $f(z)= (xy+c, x) \in A= [0, a_2] \times [a_2, +\infty[$, then $f(z) \in [0, a_2] \times [a_2, 1+c[$, hence 
\[
f^2(z) \in [c, a_2 (1+c)+c] \times [0, a_2] \subset [c, 1+c] \times [0, a_2] \subset Q_0.
\]
Assume that for all integers $n,\; f^{2n}(z)= (x_{2n}, x_{2n-1}) \in Q_0 = [0, 1+c] \times [0, a_2]$ and $f^{2n+1}(z)= (x_{2n+1}, x_{2n}) \in A= [0, a_2] \times [a_2, +\infty]$.
Then for all integers $n \geq 0,\;  x_{2n} \in [a_2, 1+c]$ and
$ x_{2n+1} \in [0, a_2]$. Hence for all integers $n \geq 2,\;   x_{2n-2}x_{2n-1}+c =  x_{2n} \geq x_{2n+1}= x_{2n}x_{2n-1}+c $.
 Thus $ x_{2n-2} \geq x_{2n}$.
 On the other $ x_{2n}x_{2n-1}+c= x_{2n+1} \leq  x_{2n+2} =  x_{2n}x_{2n+1}+c$.
 Hence $ x_{2n-1} \leq x_{2n+1}$. Thus $ y_{2n}= x_{2n-1} \leq y_{2n+2}=x_{2n+1} $, for all integers $n \geq 0$.
 Let $l= \lim x_{2n}$ and $l'=  \lim y_{2n}$.
 Then $l= l' \in \{a_1, a_2\}$.
 Since $x_{2n} \geq a_2$ for all integers $n \geq 0$, we deduce that $l=a_2$.
 Hence $f^{n}(x_0, y_0)$ converges to $\theta$.

Now, if $f^{2n}(z)$ and $f^{2n+1}(z)$ are both   in $Q_0$ for some integer $n= n_0$,
 then $f^{k} (z) \in Q_0$ for all integers $k \geq 2 n_0$. Hence by Proposition \ref{prop:r0a}~a), 
 $f^{n}(z)$ converges to $\alpha$.
\end{proof}

 \begin{obs}
 \label{ro}
If $z \in Q_0 $ such that $f(z) \in Q_0$, then $f^{n}(z) \in Q_0$ for all integers $n \geq 2$ and $\lim f^{n}(z)= \alpha$.
 \end{obs}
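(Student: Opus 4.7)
My plan is to observe that Remark~\ref{ro} is essentially an immediate restatement of part~a) of Proposition~\ref{prop:r0a}: the hypotheses coincide (one is given $z\in Q_{0}$ with $f(z)\in Q_{0}$) and the two conclusions are the same. Strictly speaking, one must implicitly exclude $z=\theta=(a_{2},a_{2})$, the unique fixed point of $f$ lying in $Q_{0}$ distinct from $\alpha$, since for $z=\theta$ the orbit is constant and does not converge to $\alpha$; this is the standing convention of the preceding proposition. So the short proof is simply to apply Proposition~\ref{prop:r0a}~a) to $z\in Q_{0}\setminus\{\theta\}$, observing that the hypothesis $f(z)\in Q_{0}$ already places $z$ in $K^{+}$ via the forward-invariance established below.

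If I preferred a self-contained argument, the decisive step would be a strengthened forward invariance. The condition $f(z)=(x_{0}y_{0}+c,\,x_{0})\in Q_{0}=[0,1+c]\times[0,a_{2}]$ forces $x_{0}\in[0,a_{2}]$, and combined with $y_{0}\in[0,a_{2}]$ (from $z\in Q_{0}$) this places $z$ in the smaller square $S:=[0,a_{2}]^{2}$. Using the identity $a_{2}^{2}-a_{2}+c=0$, any $(x,y)\in S$ satisfies $xy+c\in[c,a_{2}]\subset[0,a_{2}]$ and $x\in[0,a_{2}]$, so $f(S)\subseteq S$. Induction then yields $f^{n}(z)\in S\subset Q_{0}$ for every $n\geq 0$, which gives the first conclusion.

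For the convergence $f^{n}(z)\to\alpha$ when $z\ne\theta$, I would reuse the mechanism from the proof of Proposition~\ref{prop:r0a}~a): introduce the auxiliary one-dimensional map $g(x)=a_{1}(x^{2}-1)+1$, whose fixed points on $[0,+\infty)$ are $1$ and $a_{2}/a_{1}>1$, and exploit its monotonicity on $[0,a_{2}/a_{1}]$ to produce monotone sequences $(d_{n})$ or $(e_{n})$ that bound $\min\{x_{2n},y_{2n}\}$ from below (when both coordinates of $z$ start below $a_{1}$) or $\max\{x_{2n},y_{2n}\}$ from above (when both start above $a_{1}$), with common limit $a_{1}$. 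The main obstacle, where most of the case analysis lives, is the mixed regime in which $x_{0}$ and $y_{0}$ lie on opposite sides of $a_{1}$: there a monotone pattern only emerges on the even-indexed subsequences after a couple of preliminary iterates, and one must rule out $\theta$ as a possible accumulation by tracking the position of $y_{2n}$ with respect to $a_{1}$ and invoking the fact that the only fixed points in $S$ are $\alpha$ and $\theta$.
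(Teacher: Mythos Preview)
Your proposal is correct and matches the paper's approach exactly: the remark is simply a restatement of Proposition~\ref{prop:r0a}~a), whose proof already shows that $z,f(z)\in Q_{0}$ forces $z\in[0,a_{2}]^{2}$, which is forward invariant (hence $z\in K^{+}$), and then runs the case analysis with the auxiliary map $g(x)=a_{1}(x^{2}-1)+1$ to obtain $f^{n}(z)\to\alpha$. Your observation that $\theta$ must be tacitly excluded is well taken and in fact more careful than the paper's statement of the remark.
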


We continue our analysis, concentrating now on the points in $Q_3$. Again, we go through a list of various cases, showing that in most situations the coordinates of the points are monotone under iteration of an appropriate iterate of the map $f_c$.

 \begin{prop}
\label{r1}
Let $z \in Q_3 \cap K^{+} \setminus \{(1+c, -1)\} $,
then
\begin{enumerate}[a)]
\item
 If $f(z) \in Q_0$, then  $f^2(z) \in Q_0$ and hence $\lim f^{n}(z)= \alpha$.
\item
If $f(z) \in A$, then  $f^2(z) \in Q_0$ and hence
$\lim f^{n}(z)= \alpha$ or $\theta$.
\item
 If $f(z) \in Q_1$, then there exists an integer $N \geq 0$ such that $f^n(z)  \in Q_0 \cup A$ for all  $n \geq N$, and hence
$\lim f^{n}(z)= \alpha$ or $\theta.$
\end{enumerate}

\end{prop}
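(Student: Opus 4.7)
The plan is to dispatch the three cases of Proposition~\ref{r1} using Lemma~\ref{insid} together with $z\in K^{+}$, which forbids orbits escaping through $L,H,M,N,P$. Parts (a) and (b) reduce to Proposition~\ref{prop:r0a} by a one-step forward analysis; part (c) requires an additional monotonicity argument along a return subsequence to~$Q_3$.

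For part (a), $f(z)\in Q_0$ together with Lemma~\ref{insid}(a) gives $f^2(z)\in Q_0\cup A\cup L$. The region $L$ is excluded by Proposition~\ref{prop:creala}(b) because $z\in K^{+}$; and $A$ is excluded because the second coordinate of $f^2(z)$ equals $x_1=x_0y_0+c\le c<a_2$, using $x_0\ge 0$ and $y_0\le 0$. Hence $f^2(z)\in Q_0$, and Proposition~\ref{prop:r0a}(a) applied to $f(z)$ yields $\lim f^n(z)=\alpha$. For part (b), $f(z)\in A$ together with Lemma~\ref{insid}(e) gives $f^2(z)\in Q_0\cup H$. Using $x_1\le a_2$ and $y_1=x_0\le 1+c$, we estimate $x_2\le a_2(1+c)+c<1+c$, where the strict bound $a_2(1+c)<1$ is the strict form of Lemma~\ref{ca} valid for $0<c<1/4$ (as then $c^2(c+2)>0$). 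Thus $f^2(z)\notin H$; since also $x_2\ge c>0$, we have $f^2(z)\in Q_0\cap K^{+}$, and Proposition~\ref{prop:r0a} applied to $f^2(z)$ delivers $\lim f^n(z)\in\{\alpha,\theta\}$.

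Part (c) is the main challenge. Argue by contradiction: assume $f^n(z)\notin Q_0\cup A$ for every $n\ge 0$. Then Lemma~\ref{insid} forces the orbit to stay in $Q_1\cup Q_2\cup Q_3$, the only admissible transitions being $Q_3\to Q_1$, $Q_1\to Q_2\cup Q_3$, $Q_2\to Q_3$. Define $n_0=0$ and $n_{k+1}=\min\{n>n_k:f^n(z)\in Q_3\}$; the transition rules give $n_{k+1}-n_k\in\{2,3\}$. The crucial claim is that $(y_{n_k})$ is strictly increasing and $(x_{n_k})$ is strictly decreasing. For each of the two gap lengths, direct computation factors the return differences into products involving $c$, $x_{n_k}$, $|y_{n_k}|$, and (for $3$-step gaps) the intermediate values $x_{n_k+1}$; schematically the bracket $c-|y_{n_k}|(x_{n_k}-1)$ controls the $y$-difference and $c-x_{n_k}(1+|x_{n_k+1}|)$ controls the $x$-difference, with an additional nonnegative factor appearing in the $3$-step case. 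Combined with the uniform bounds $|y_{n_k}|\le 1$, $x_{n_k}\le 1+c$ and the admissibility constraint $x_{n_k}|y_{n_k}|\ge c$ coming from $z_{n_k+1}\in Q_1$, these factorizations force both desired strict inequalities, with equality only at $(1+c,-1)$ or $(c,-1)$. The first is the excluded periodic point, whose $f$-orbit enters $(-1,-1)\in N$ in two steps and hence cannot lie on an orbit satisfying the standing assumption; the second satisfies $f^3(c,-1)=(c,c)\in Q_0$, again contradicting the assumption.

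Once the strict monotonicity is in place, $(x_{n_k},y_{n_k})$ converges in $\overline{Q_3}$. Passing to a subsequence on which $n_{k+1}-n_k$ is constant equal to some $m\in\{2,3\}$, the limit $(x_*,y_*)$ is a fixed point of $f^m$ lying in $\overline{Q_3}$. But the fixed points of $f^2$ are $\alpha$ and $\theta$, neither in $\overline{Q_3}$, and by Proposition~\ref{K1} the only $3$-periodic point in $\overline{Q_3}$ is $(1+c,-1)$, which is ruled out by the strict increase $y_*>y_{n_1}>-1$. This contradiction shows that the orbit enters $Q_0\cup A$ at some step $N$, and then parts (a)--(b) together with Proposition~\ref{prop:r0a} applied to $f^N(z)$ yield $\lim f^n(z)\in\{\alpha,\theta\}$. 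The main obstacle will be the monotonicity analysis in part (c): one must carefully identify the equality cases of the elementary inequalities and match each to an orbit that is either excluded by hypothesis or forced to exit $Q_1\cup Q_2\cup Q_3$ at the next return.
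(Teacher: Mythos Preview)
Your argument is correct and follows essentially the same route as the paper: parts (a) and (b) are handled by the same one-step estimates reducing to Proposition~\ref{prop:r0a}, and part (c) uses the same monotone-return-to-$Q_3$ scheme with gaps in $\{2,3\}$ followed by a periodic-point contradiction. Your factorizations $y_{n_{k+1}}-y_{n_k}=c-|y_{n_k}|(x_{n_k}-1)$ and $x_{n_{k+1}}-x_{n_k}=c-x_{n_k}(1+|x_{n_k+1}|)$ (together with $x_{n_k}\ge c$ from $x_{n_k}|y_{n_k}|\ge c$) give the gap-$2$ monotonicity more cleanly than the paper's version, and your constant-gap subsequence plus Proposition~\ref{K1} is equivalent to the paper's limiting $\varepsilon_k,\delta_k$ computation.
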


\begin{proof}
Let $z= (x_0, y_0) \in [0, 1+c] \times [-1, 0]=Q_3.$

a) If $f(z)= (x_0y_0+c, x_0) \in Q_0=  [0, 1+c] \times [0, a_2]$, since $x_0 y_0 \leq 0$, we deduce that  $f(z) \in [0, c] \times [0, a_2]$. Thus $f^2(z) \in [c, c a_2+ c] \times [0, c] \subset [c, 1+c] \times [0, c] \subset Q_0$, since $c a_2 \leq 1$.
By Proposition \ref{prop:r0a}, we deduce that  $\lim f^{n}(z)= \alpha$.

b) Now, assume that  $f(z) \in A= [0, a_2] \times [a_2, + \infty[$. Then  $f(z) \in [0, c] \times [a_2, 1+c]
$. Hence $f^2(z) \in [c, c(1+c)+ c] \times [0, c] \subset [c, 1+c] \times [0, c] \subset Q_0$, since $c(1+c) \leq a_2 (1+c) <1$.
Then by Proposition \ref{prop:r0a}, we obtain 2).

c) Assume without loss of generality that $z= (x_0, y_0) \in [0, 1+c] \times [-1, 0]=Q_3$ and  $ f^{n}(z) \not \in Q_0 \cup A$ for all integers $n \geq 1$.
Then
  $f^2(z) \in Q_3$ or   $f^3(z) \in Q_3$.

\medskip
 {\it Case 1}: ${f^2(z) \in Q_3}$.

 In this case $f(z) \in Q_1 = [-1, 0] \times [0, 1+c]$ and $f^{3}(z) \in Q_1$.

 {\it Claim 1}: ${x_0 \geq x_2 \textrm{ and }y_0 \leq y_2}$.

\vspace{0.5em}

 Indeed, we have
\[
-1 \leq x_1= x_0y_0+c \leq 0,\; 0 \leq  x_{2}= x_1 x_0 +c \leq 1+c
\]
 and
$-1 \leq x_3= x_2 x_1+c \leq 0$.
Then $x_1 \leq x_2$ and hence $y_0 \leq x_1=y_2$.
On the other hand $x_3 \leq x_2$ implies that $x_2 \leq x_0$.

\vspace{0.5em}

{\it Case 2}: $f^3(z) \in   Q_3$.

 In this case $f(z) \in Q_1 $ and $f^{2}(z) \in Q_2$.

{\it Claim 2}: $x_0 \geq x_3$ and $y_0 \leq y_3$.

\vspace{0.5em}

 Indeed, assume that $y_0  \geq y_2 = x_0y_0+c$. Then $-c \geq y_0(x_0-1).$
 Since $y_0 \leq 0$, we deduce that $x_0 \geq 1$. Hence $0 \leq x_0-1 \leq c.$
 Since $-1 \leq y_0 \leq 0$, then $y_0(x_0-1) \geq -c$. Absurd.
Hence $y_0  \leq y_2 = x_0y_0+c$. Thus $x_0y_0+ c \leq
 x_0(x_0 y +c)+c = y_3.$ Then $y_2 \leq y_3$ and
hence $y_0 \leq y_3$. On the other hand, since $x_1= x_0y_0+c \leq 0$,
 then  $y_0 (x_0y_0+c) \geq y_3 (x_0y_0+c)$.
 Therefore
 $$y_0 (x_0y_0+c)+ c \geq ((x_0y_0 +c)x_0 +c) (x_0y_0+c)+ c = x_3.$$
 To have $x_0 \geq x_3$, it is enough to prove that
 $ x_0 \geq y_0 (x_0y_0+c)+ c$, which is equivalent to
 $ (x_0y_0+c -x_0)(y_0+1) \leq 0$. This is true since $ y_0 \geq -1,\; x_0y_0+c \leq 0$ and $x_0  \geq 0$.
Then we obtain the claim.

Therefore, for any $(x, y) \in [0, 1+c] \times [-1, 0]=Q_3$, there exists an increasing sequence $(n_k)_{k \geq 0}$ such that $f^{n_{k}}(x, y)= (x_{n_k}, y_{n_k})$ satisfies
$n_0= 0, n_{k+1}- n_k \in \{2,3\},\; x_{n_{k}} \geq  x_{n_{k+1}},\; y_{n_{k}} \leq  y_{n_{k+1}}$.
Therefore  $f^{n_{k}}(x, y)$ converges to $(a, b) \in [0, 1+c] \times [-1, 0]$. Observe that $a < 1+c$ and $b >-1$.
Writing for all integers $k \geq 0,\; x_{n_{k}}= a+ \varepsilon_k$ and  $y_{n_{k}}= b- \delta_k$ where $ \varepsilon_k$ and  $ \delta_k$ are decreasing sequences of non-negative real numbers converging to $0$, we obtain that
$$\delta_{k+1}= b - (a+ \varepsilon_k) (b - \delta_k)- c,\;  \varepsilon_{k+1}= -a + (b - \delta_{k+1}) (a+  \varepsilon_{k})+ c,\; \mbox { if } n_{k+1}-n_k= 2,$$
or
$$\delta_{k+1}= b - ((a+ \varepsilon_k) (b - \delta_k) + c) (a+  \varepsilon_{k})- c,\;  \varepsilon_{k+1}= -a + (b - \delta_{k+1}) ((a+  \varepsilon_{k})(b - \delta_{k})+ c) + c,$$
if $ n_{k+1}-n_k= 3$.
Letting  $k$ tend to infinity, we obtain in the first case that
$a=b$ and $ab+c =a$. Hence $a \in \{a_1, a_2\} \subset [0, 1+c[ \times ]-1, 0]  $, which is absurd.

In the second case, we obtain $a=b$ or $ab+c+1 =0$.
If $a=b$, we obtain a contradiction as above.
If $ab+c+1 =0$, we also have a contradiction, since $(a, b) \in [0, 1+c[ \times ]-1, 0].$
\end{proof}

\subsubsection {Dynamics of $f$ outside ${\cal D} \cap  K^{+}$}

In this part, we rule out the existence of other basins of attractions inside $K^+$. That is, we show that points that are in $\mathcal D \cap K^+$ and outside of the basin of the attracting fixed point must be on 
its boundary. At the same time we prove that the boundary of this basin of attraction is simply made of two stable manifolds: the stable manifolds of the 3-cycle $\{p,f_c(p),f^2_c(p)\}$ and the stable manifold of the fixed point $\theta$.

We will study the itineraries of the points along the following partition into rectangular regions:

\begin{minipage}{0.5\textwidth}
\begin{figure}[H]
\includegraphics[scale=0.26]{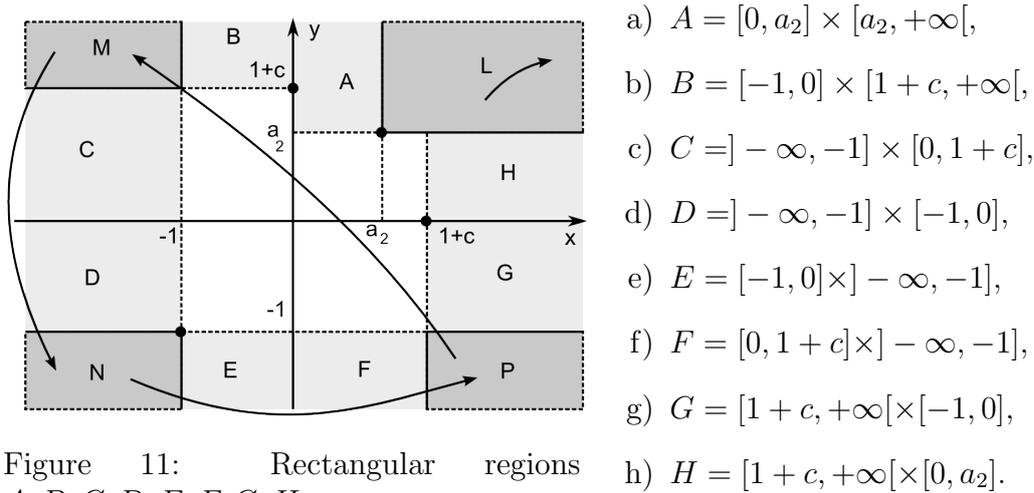}
\caption{Rectangular regions $A,B,C,D,E,F,G,H$}
\label{fig:ABCD}
\end{figure}
\end{minipage} \hfill
\begin{minipage}{0.45\textwidth}
 \begin{enumerate}
\item[a)] $A= [0,a_2] \times [a_2, +\infty[,$
\item[b)] $ B= [-1,0] \times [1+c, +\infty[, $
\item[c)] $C= ]-\infty, -1] \times [0, 1+c],$
\item[d)]  $D= ]-\infty, -1] \times [-1, 0],$
\item[e)] $E= [-1,0] \times ]-\infty, -1],$
\item[f)] $F= [0,1+c] \times ]-\infty, -1],$
\item[g)] $G= [1+c,+\infty[ \times [-1, 0],$
\item[h)] $ H= [1+c,+\infty[ \times [0, a_2]$.
\end{enumerate}

\end{minipage}

\vspace{0.3cm}

One gets a very useful crude description of the dynamics by describing how the various subsets of the partition are mapped into each other:

\begin{lem}
\label{prop:outsid}
The induced dynamics on the sets $A, B, \ldots, L$ is as follows:
\begin{enumerate}[a)]
\item
$f(A) \subset Q_0 \cup H,\; f(B ) \subset D \cup Q_2 \cup Q_3 $;
\item
$f(C ) \subset N \cup E \cup F,\;  f(D ) \subset P \cup F$;
\item
$f(E ) \subset Q_3 \cup G,\;  f(F ) \subset Q_0 \cup Q_1 \cup C$;
\item
$f(G ) \subset A \cup B \cup M,\;  f(H ) \subset A \cup L$.
\end{enumerate}
\end{lem}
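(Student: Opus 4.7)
My plan is a direct case-by-case verification, using only the formula $f(x,y) = (xy+c, x)$ and the defining inequalities of the eight source sets. Since the second coordinate of $f(x,y)$ is simply $x$, whose range is read off from the source set, only the first coordinate $xy + c$ needs nontrivial bounding. I would split the eight inclusions by the sign of $xy$: in the cases $A, D, E, H$, the coordinates $x$ and $y$ are either both non-negative or both non-positive, so $xy \ge 0$ and $xy + c \ge c > 0$; in the cases $B, C, F, G$, they have opposite signs, giving $xy \le 0$ and $xy + c \le c$.

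After this sign observation, each inclusion follows by matching the resulting first-coordinate range of $f(X)$ with the union of first-coordinate ranges of the target pieces, while simultaneously checking that the second coordinate $x$ lies in the correct target piece. For instance, $f(A) \subset [c, +\infty) \times [0, a_2] = Q_0 \cup H$, $f(D) \subset [c, +\infty) \times (-\infty, -1] = P \cup F$, and $f(E) \subset [c, +\infty) \times [-1, 0] = Q_3 \cup G$; symmetrically $f(B) \subset (-\infty, c] \times [-1, 0] \subset D \cup Q_2 \cup Q_3$, $f(C) \subset (-\infty, c] \times (-\infty, -1] \subset N \cup E \cup F$, and $f(G) \subset (-\infty, c] \times [1+c, +\infty) \subset A \cup B \cup M$ (using $1+c > a_2$ so that $G$'s $x$-range fits into $A$'s second-coordinate range). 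The case $f(H) \subset A \cup L$ is handled similarly, with $[c,+\infty) \times [1+c,+\infty) \subset A \cup L$.

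The only inclusion requiring an extra calibration is $f(F) \subset Q_0 \cup Q_1 \cup C$, since here the second coordinate $x \in [0, 1+c]$ can exceed $a_2$ and one must rule out landing in $H$, which shares the range $[0, a_2]$ with $Q_0$ in its second coordinate but demands first coordinate in $[1+c, +\infty)$. I would observe that for $(x,y) \in F$ with $x > a_2$ and $y \le -1$, one has $xy + c \le -x + c < -a_2 + c < 0$, using $a_2 > 1/2 > c$ throughout $0 < c < 1/4$; hence the first coordinate cannot reach $[0, 1+c]$ when $x > a_2$, so the image must lie in $Q_1$ or $C$. When $x \in [0, a_2]$ instead, the image lies in whichever of $Q_0, Q_1, C$ is selected by the sign and magnitude of $xy + c$. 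The main obstacle is purely bookkeeping: keeping the eight source sets and their corresponding target unions organized and the partitioning consistent. No technical difficulty beyond the subcase just described arises; all bounds follow from the chain $0 < c < 1/4 < 1/2 < a_2 < 1 < 1 + c$.
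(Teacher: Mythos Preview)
Your approach is essentially the same as the paper's: a direct case-by-case verification bounding $xy+c$ by its sign and then reading off the target rectangles. The paper carries out exactly the computations you describe for $B,C,D,E,G$, and likewise singles out $f(F)$ as the one case needing an extra check.

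There is one mislabeling in your discussion of $f(F)$: the set you must rule out is $A = [0,a_2]\times[a_2,+\infty)$, not $H$. Indeed $H$ is automatically excluded since $xy+c \le c < 1+c$, whereas the genuine obstruction is that when the second coordinate $x$ exceeds $a_2$ the image could a~priori land in $A$. Your inequality $xy+c \le -x+c < -a_2+c < 0$ for $x>a_2$, $y\le -1$ is exactly what is needed to exclude $A$ (the paper phrases the same fact contrapositively: if $f(x,y)\in A$ then $x\ge a_2$ and $xy\ge -c$, forcing $y>-1$). So the argument is correct once you replace ``$H$'' by ``$A$'' in that paragraph.
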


\begin{proof} 
All these inclusions are easy to check:
\begin{enumerate}[i)]
\item $
f(B ) \subset (-\infty, c] \times [-1, 0] \subset D \cup Q_2 \cup Q_3$.
\item
 $f(C) \subset (-\infty, c] \times [-\infty, -1] \subset N \cup E \cup F,$
 
 \item $f(D ) \subset [c, +\infty] \times [-\infty, -1] \subset F \cup P$.

\item $ f(E ) \subset [c, +\infty] \times [-1, -0] \subset G \cup Q_3.$

\item  $ f(G ) \subset (-\infty, c] \times [1+c, +\infty] \subset A \cup B \cup M$.

\item $ f(F ) \subset (-\infty, c] \times [0, 1+c] \subset Q_0 \cup Q_1 \cup A \cup C$.
\end{enumerate}

On the other hand, $f(x, y)= (xy+c, x) \in A= [0, a_2] \times [a_2, +\infty[$ implies that
$x \geq a_2$ and $-c \leq xy$. Since $c < a_2$, we deduce that $y >-1$. Hence $(x,y)$ cannot belong to
$F= [0, 1+c] \times ]-\infty, -1]$.
\end{proof}

The next proposition is the most technical. We analyze the induced dynamics on the partition (which is best represented by its transition graph, see figure \ref{fig:Cycles}). We identify then certain cycles of length 6, along which coordinates of iterates become monotone sequences. This simple fact is then used to show the convergence towards some attracting cycles.

\par\smallskip\noindent
\begin{figure}[h]
\centering
\begin{minipage}[h]{.8\textwidth}

\begin{tikzpicture}[->,>=stealth',shorten >=1pt,auto,node distance=2.8cm,
                    semithick]
  \tikzstyle{every state}=[fill=none,draw=black,text=black]
  \node[state] (D')                    {$D'$};
  \node[state]         (B') [below right of=D'] {$B'$};
   \node[state]         (F') [above right of=D'] {$F'$};
  \node[state]         (C') [right of=F'] {$C'$};
  \node[state]         (E') [right of=C'] {$E'$};
  \node[state]         (G') [above right of=E'] {$G'$};
  \node[state]         (H') [above of=C'] {$H'$};
  \node[state]         (A') [left of=H'] {$A'$};
  \path (B') edge           (D');
  \path (D') edge           (F');
  \path (F') edge   [bend left]        (C');
  \path (C') edge           (E');
  \path (E') edge           (G');
    \path    (C') edge [bend left ]  (F');
\path	(A') edge   [bend left]  (H');
\path	(H') edge [bend left] (A');
\path	(G') edge [bend left] (A');
\path	(G') edge [bend left] (B');
\end{tikzpicture}
\captionof{figure}{Cycles in the partition}
\label{fig:Cycles}
\end{minipage}
\end{figure}

\begin{prop}
\label{prop:r0}
If $z \in  K^{+} \setminus \bigcup_{n=0}^{+\infty} f^{-n} ({\cal D} \cap  K^{+})$ , then $z \in W^{s}(p) \cup
W^{s}(f(p)) \cup W^{s}(f^2(p))$.

\end{prop}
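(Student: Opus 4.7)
The plan is to classify the possible itineraries of the forward orbit of $z$ through the partition and show that, after excluding the two 2-cycles by elementary inequalities, the only remaining possibility is that the orbit eventually cycles through the 6-region loop and converges to the 3-cycle.

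First I would reduce the orbit to visiting only the eight outer regions. The hypothesis says the forward orbit avoids $\mathcal{D}$; since $z\in K^+$ the orbit is also bounded. By Proposition~\ref{prop:creala}~b), any point of $S=(L\cup M\cup N\cup P)\setminus\{\alpha,\theta,p,f(p),f^2(p)\}$ has orbit diverging to infinity, hence is also forbidden. The fixed points $\alpha,\theta$ lie in $Q_0\subset\mathcal{D}$, so are forbidden too. If a forward iterate hits $p$, $f(p)$, or $f^2(p)$, then $z$ is itself one of these three periodic points and trivially belongs to $W^s(p)\cup W^s(f(p))\cup W^s(f^2(p))$. So I may assume the orbit is forever confined to $A\cup B\cup C\cup D\cup E\cup F\cup G\cup H$. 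Applying Lemma~\ref{prop:outsid} and deleting every transition into $\mathcal{D}\cup S$ leaves exactly the transitions $A\to H$, $H\to A$, $B\to D$, $D\to F$, $F\to C$, $C\to E$ or $F$, $E\to G$, $G\to A$ or $B$; the resulting graph (Figure~\ref{fig:Cycles}) has three cycles: $A\leftrightarrow H$, $C\leftrightarrow F$, and $B\to D\to F\to C\to E\to G\to B$.

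Next I would rule out the two 2-cycles. Writing $(x_n,y_n)=f^n(z)$: if the orbit is eventually in $A\cup H$, then $(x_{2n+1},y_{2n+1})\in H$ gives $x_{2n+1}\ge 1+c$, whence
\[
x_{2n+2}+1 \,=\, x_{2n+1}x_{2n}+c+1 \,\ge\, (1+c)(x_{2n}+1).
\]
Since $x_0\ge 0$, this forces $x_{2n}+1\to+\infty$, contradicting $x_{2n}\le a_2<1$. If instead the orbit is eventually in $C\cup F$, then $x_{2n}\le -1$ and $y_{2n}\ge 0$ force $0\le x_{2n+1}\le c$, and from $x_{2n+2}=x_{2n+1}x_{2n}+c$ one reads off $|x_{2n+2}|=x_{2n+1}|x_{2n}|-c\le c\,|x_{2n}|$, so $|x_{2n}|\to 0$, again contradicting $|x_{2n}|\ge 1$. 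Consequently the orbit eventually follows the 6-cycle $B\to D\to F\to C\to E\to G\to B$.

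The final step, which is the main obstacle, is to show that such a trapped orbit converges to the 3-cycle. After a shift placing the starting region at $B$, I would perform a case-by-case monotonicity analysis along the 6-cycle, patterned after Propositions~\ref{prop:r0a} and~\ref{r1}, showing that the coordinates of $(x_{6n},y_{6n})$ are eventually monotone and hence converge to a limit $(x^*,y^*)\in\overline B\cap K^+$ satisfying $f^6(x^*,y^*)=(x^*,y^*)$. The pairs of consecutive regions $\{B,C\}$, $\{D,E\}$, $\{F,G\}$ share the three periodic points $f^2(p),\,p,\,f(p)$ on their closures respectively; by Proposition~\ref{K1} together with the remark after it (no 2-cycles), and a finite polynomial check to exclude any primitive period-$6$ orbit inside $\overline B$, the only value of $(x^*,y^*)$ compatible with avoidance of $\mathcal{D}$ is $f^2(p)=(-1,1+c)$. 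The analogous argument applied to each subsequence $(x_{6n+k},y_{6n+k})$ for $k=1,\dots,5$ yields $f^n(z)\to f^j(p)$ for the appropriate $j\in\{0,1,2\}$, i.e., $z\in W^s(f^j(p))$. The heart of the work is thus the detailed tracking of how the six coordinate inequalities propagate through one period of the 6-cycle, a bookkeeping directly analogous to the analysis of $Q_3$ in the proof of Proposition~\ref{r1}.
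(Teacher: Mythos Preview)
Your reduction to the eight-region itinerary and the transition graph is exactly what the paper does, and your eliminations of the two eternal $2$-cycles are valid and in fact cleaner than the paper's: the paper shows monotone convergence of $(x_{2n},y_{2n})$ to a fixed point that cannot lie in the region, whereas your growth estimate $x_{2n+2}+1\ge(1+c)(x_{2n}+1)$ for $A\leftrightarrow H$ and your contraction $|x_{2n+2}|\le c|x_{2n}|$ for $C\leftrightarrow F$ give the contradiction directly.

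There is, however, a genuine gap in the passage ``Consequently the orbit eventually follows the 6-cycle.'' Ruling out \emph{eternal} trapping in $C\cup F$ does not show that the orbit is eventually on the \emph{pure} 6-cycle: after $A',H'$ are emptied the remaining graph still allows, at every visit to $C$, a choice between $E$ and $F$, so the returns to $F$ may have gaps in $\{2,6\}$ in any pattern, with $C\leftrightarrow F$ detours interspersed infinitely often. Your contraction bounds the length of each individual detour (using boundedness of the orbit), but the 6-cycle step can replenish $|x|$, so infinitely many detours are not excluded. The paper handles this by proving that the return map to $F$ is coordinatewise monotone \emph{both} for the $2$-step and for the $6$-step return; this forces convergence of the full return subsequence and then shows that infinitely many $2$-steps would make the limit a period-$2$ point in $\overline F$, which is impossible. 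Your outline skips exactly this mixed-return monotonicity.

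A second, smaller gap is the proposed ``finite polynomial check'' to rule out primitive $6$-periodic points in $\overline B$. This system has large degree and you would need it for all $c\in(0,1/4)$; the paper avoids it entirely by observing that the monotonicity inequalities along one pass of the $6$-cycle are \emph{strict} unless the starting point in $F$ is $f(p)=(1+c,-1)$, so the limit of the return subsequence is pinned down directly without any classification of $6$-cycles.
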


\begin{proof}
Let $V= K^{+} \setminus \bigcup_{n=0}^{+\infty} f^{-n} ({\cal D} \cap  K^{+})$ and  suppose that $V \neq \emptyset$.
Then $ V \subset A \cup B \cup C \cup D \cup E \cup F \cup G \cup H$.
For all $X \in \{A, B, C, D, E, F, G, H\}$, denote by $X'= X \cap V$.
By Lemma \ref{prop:outsid}, we have (see Figure~\ref{fig:Cycles})
$$f(A') \subset H',\; f(H') \subset A'$$
$f(B') \subset D',\; f(D') \subset F',\; f(F') \subset  C',\; f(C') \subset  F' \cup E',\; f(E') \subset G' ,\; f(G') \subset A' \cup B'$.

Observe that in Figure~\ref{fig:Cycles} we have three types of cycles: two cycles of length two $A', H'$ 
and $F', C'$
and cycles of length more than $6$ beginning in one of the vertices  $F', C', E', G', B', D'$.
Let $z= (x_0, y_0) \in V$. We will prove that the orbit of $z$ cannot stay in a cycle.

\vspace{0.5em}

{\it Case 1}:  $z= (x_0, y_0) \in A'$.

Then for all integers $n \geq 1,\;  f^{2n} (x_0, y_0)= (x_{2n}, y_{2n})= (x_{2n}, x_{2n-1}) \in A$ and $f^{2n-1} (x_0, y_0)= (x_{2n-1}, x_{2n-2}) \in H$.
Hence
$$ 0 \leq x_0 \leq a_2,\; a_2 \leq y_0 <+\infty,$$
\begin{eqnarray}
\label{b0}
 1 \leq x_0 y_0 < +\infty,\; -c \leq x_0 x_1 \leq a_2-c,\; 1 \leq x_1 x_2 < + \infty.
 \end{eqnarray}
By (\ref{b0}) and the fact that $a_2 - c<1$, we deduce that $x_0 x_1 < x_0 y_0$ and $x_0 x_1 < x_2 x_1$.  Hence
 $$y_2= x_1 < y_0 \mbox {  and } x_0 < x_2.$$
 We deduce by induction that
 $(x_{2n})_{n \geq 0}$ is a increasing convergent sequence and $ (y_{2n})_{n \geq 0}$ is a decreasing convergent sequence. Let $l = \lim x_{2n}$ and
 $l'= \lim y_{2n}$.
 Since
 \begin{eqnarray}
\label{l00}
 x_{2n+2}= x_{2n} (x_{2n} y_ {2n}+ c)+ c  \mbox { and } y_{2n+2}= x_{2n} y_{2n}+ c >1,
 \end{eqnarray}
 we deduce by (\ref{l00}), that
$l=l' \in \{a_1, a_2\}$. Since $(l,l') \in A$, we obtain that $l=a_2$.
Since $(a_2, a_2)$ is a fixed point for $f$, then $\lim (x_{2n+1}, y_{2n+1})=(a_2, a_2) \in H= [1+c, +\infty[ \times [0, a_2]$, which is absurd.
Hence $A' = \emptyset$.

\vspace{0.5em}

{\it Case 2}:  For all integers $n \geq 0,\; f^{2n}(z) \in C' $ and $f^{2n+1}(z) \in F'$.

Then
$$ 0 \leq x_0 \leq -1,\; -1 \leq y_0  \leq 1+c,$$
\begin{eqnarray}
\label{c0}
 -c \leq x_0 y_0 \leq 1,\; -\infty < x_0 x_1 \leq -1-c,\; -c \leq x_1 x_2 \leq 1,
 \end{eqnarray}
By (\ref{c0}), we deduce $x_0 x_1 < x_0 y_0$ and $x_0 x_1 < x_2 x_1$. Hence
 $$y_2= x_1 > y_0 \mbox {  and } x_0 < x_2.$$
 We deduce by induction that
 $(x_{2n})_{n \geq 0}$ and and $ (y_{2n})_{n \geq 0}$ are increasing convergent sequences. If $l = \lim x_{2n}$ and
 $l'= \lim y_{2n}$, then by (\ref{l00}), we deduce that
$l=l' \in \{a_1, a_2\}$. This is absurd, because  $(l,l) \in C $.
Thus an infinite cycle of type $C \to F \to C$ cannot happen.

\vspace{0.5em}

{\it Case 3}:  For all integers $n \geq 0,\; f^{6n}(z) \in F,\;f^{6n+1}(z) \in C,\; f^{6n+2}(z) \in E,\; f^{6n+3}(z) \in G,\; f^{6n+4}(z) \in B, \; f^{6n+5}(z) \in D$.
Then
\begin{eqnarray}
\label{xx}
 0 \leq x_0 \leq 1+c,\; -\infty < y_0 \leq -1,
 \end{eqnarray}
\begin{eqnarray}
\label{x0}
 -\infty < x_0 y_0 \leq -1-c,\; -1-c \leq x_0 x_1 \leq -c,
 \end{eqnarray}
 \begin{eqnarray}
\label{x1}
-1-c \leq x_1 x_0 \leq -c,\; 1 \leq x_1 x_2 < + \infty,
 \end{eqnarray}
 \begin{eqnarray}
\label{x2}
-1-c \leq x_3 x_2 \leq -c,\; -\infty \leq x_3 x_4 \leq -1-c,
 \end{eqnarray}
\begin{eqnarray}
\label{x3}
-c \leq x_5 x_4 \leq 1,\; -\infty < x_5 x_6 \leq -1-c.
 \end{eqnarray}
By (\ref{x0}), we obtain $x_0 y_0 \leq x_0 x_1$, hence $y_0 \leq x_1= y_2$.
By (\ref{x2}), we obtain $x_4 \leq x_2$.
By (\ref{x3}) and (\ref{x1}) , we have $x_4 x_5 \leq x_1 x_2$. Hence $x_1 \leq x_5$.
Thus $y_0 \leq x_1 \leq x_5= y_6$.
By (\ref{x3}) and (\ref{x1}) , we have $x_5 x_6 \leq x_0 x_1$. Since $x_1 \leq x_5 \leq 0$ and $x_0 \geq 0$ and $x_6 \geq 0$, then we have $x_0 \leq x_6$.
Thus the sequences  $(x_{6n})_{n \geq 0}$ and $(y_{6n})_{n \geq 0}$ are increasing.

   \vspace{0.5em}

   {\bf Claim: $\lim f^{6n}(x_0, y_0)= (1+c, -1)= f(p)$.}

   Indeed, let us write $\lim x_{6n}= l$ and $\lim y_{6n}= l'$.
   Then $f^{6}(l,l')= (l, l')$.
   Assume that $(l, l') \neq (1+c, -1)$ and put $f^6(l, l')= (a, b)$.
  We deduce as earlier that $l \leq a$ and $l' \leq b$. Since $0 \leq l < 1+c$ and $-\infty < l' < -1$, we have that $l < a$ or $l' < b.$ Thus  $f^{6}(l,l') \neq (l, l')$, which is absurd. This proves the claim.

   Let $z= (x_0, y_0) \in V$ and assume without loss of generality that $z \in F$,  then  there exists an increasing sequence $(n_k)_{k \geq 0}$ such that $f^{n_{k}}(x, y)= (x_{n_k}, y_{n_k})$ satisfies
   \[
   n_0= 0,\;  n_{k+1}- n_k \in \{2,6\},\; x_{n_{k}} \leq  x_{n_{k+1}},\; y_{n_{k}} \leq  y_{n_{k+1}}.
   \]
Moreover, there exists an integer $N\geq 0$ such that $ n_{k+1}- n_k =6$ for all $k \geq N$, otherwise one would encounter an infinite cycle $F \to C \to F$. In this case, there exists a subsequence $f^{n_{k_{i}}}(z)$ of $f^{n}(z)$ converging   to the fixed point $(a_2, a_2)$  with  $f^{n_{k_{i}}+1}(z)$ in $C= [-1, 0] \times [-1, 0]$, which is absurd.
Hence  $ n_{k+1}- n_k =6$ for all $k \geq N$ and we deduce that $\lim f^{6n}(x_0, y_0)= (1+c, -1)= f(p)$.
\end{proof}

\subsubsection { Description of $K^{-}$}
This section gives a detailed description of $K^-(f_c)$ following the line of study of the set $K^+$. We prove that in the parameter range $0<c<1/4$ the set $K^-$ is simply made of the union of two unstable manifolds of periodic cycles.

 \begin{teo}
If  $0< c <1/4$, then $K^{-}= W^{u}(\theta)\cup  W^{u}(p)\cup W^{u}(f(p)) \cup W^{u}(f^2(p))$
 where $p= (-1, -1)$.
\end{teo}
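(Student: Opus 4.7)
The strategy mirrors the proof of the $K^+$ theorem, splitting into the two inclusions. The direct inclusion $W^{u}(\theta)\cup W^{u}(p)\cup W^{u}(f(p)) \cup W^{u}(f^2(p)) \subset K^-$ is immediate: any $z$ in one of these unstable manifolds has, by definition, a backward orbit $f_c^{-n}(z)$ converging to the corresponding periodic point, hence bounded, so $z \in K^-$.

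For the reverse inclusion, fix $R > R_2$ and $z \in K^-$. By Proposition~\ref{Kmoins} (stated for $K^-\subset\mathbb{C}^2$ but immediately restricting to the real slice, since $f_c$ preserves $\mathbb{R}^2$), the decomposition $K^- = \bigcup_n f^n(D_R \cap K^-)$ together with the nested inclusions $f^n(D_R\cap K^-)\subset f^{n+1}(D_R\cap K^-)$ forces the backward iterates $f^{-n}(z)$ to enter and remain in $D_R \cap K^- = [-R,R]^2 \cap K^-$ for all sufficiently large $n$. Points of $K^-$ never reach the critical line $\mathbb{R}\times\{0\}$ under backward iteration, so the single-valued inverse $f_c^{-1}(X,Y) = (Y,(X-c)/Y)$ is well defined along the orbit. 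The task then reduces to showing that every $z \in D_R \cap K^-$ satisfies $f^{-n}(z) \to \theta$ or $f^{-n}(z) \to q$ for some $q \in \{p, f(p), f^2(p)\}$.

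To establish this convergence, I would perform a partition-based analysis of the backward dynamics that runs parallel to the forward analysis underlying Propositions~\ref{prop:r0a}, \ref{r1}, and \ref{prop:r0}: partition $D_R \cap K^-$ into rectangular regions (analogues of $A, B, \ldots, H, Q_0, \ldots, Q_3$, but now adapted to $f^{-1}$), build the transition graph of $f^{-1}$ on these regions, and in each cyclic pattern of the graph identify monotone subsequences of coordinates along the backward iterates. Boundedness of the coordinates in $[-R,R]$ then yields convergence of those monotone sequences. The limit must be a periodic point of $f$; by Proposition~\ref{K1} and the remark following it, the only candidates are $\alpha$, $\theta$, and the points of the 3-cycle. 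Since $\alpha$ is attracting for $f$ when $0<c<1/4$, it is repelling for $f^{-1}$, so it cannot be the limit of any non-trivial backward orbit, leaving only $\theta$ and the 3-cycle --- which gives exactly the decomposition in the statement.

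The main obstacle will be the combinatorial case analysis itself: designing a partition suited to $f^{-1}$ (whose natural boundaries come from iterates of $0$ and $\pm 1$ under $f^{-1}$ rather than under $f$), assembling the transition graph, and in each cycle of the graph singling out the correct monotone quantity in the backward iterates. As in the proof of Proposition~\ref{prop:r0}, the key elementary inequalities (analogues of Lemma~\ref{ca}) will depend delicately on $c\in(0,1/4)$, and ruling out hidden higher-period cycles of $f^{-1}$ inside $D_R \cap K^-$ will once again be the most technical step.
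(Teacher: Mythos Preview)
Your strategy is essentially the paper's, and the easy inclusion plus the reduction to a bounded region are both fine. The one point where you diverge, and where you are making more work for yourself than necessary, is the proposal to \emph{design a new partition adapted to $f^{-1}$}. The paper does not do this: it reuses the \emph{same} rectangular regions $A,B,\ldots,H$, $L,M,N,P$, $Q_0,\ldots,Q_3$ already introduced for the forward study, records how $f^{-1}$ permutes them (this is Proposition~\ref{prop:creal}\,a), Lemma~\ref{insidf-1} and Lemma~\ref{outsid}), and then observes that the coordinate monotonicity along each cycle is exactly the monotonicity already established in the forward proofs --- read in reverse. Many steps are literally ``as in Case~$k$ of the proof of Proposition~\ref{prop:r0}'' or ``as in Case~$k$ of the proof of Proposition~\ref{r1}\,c)''. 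So the combinatorial case analysis you flag as ``the main obstacle'' is largely already done; what remains is only to check that the limits of the monotone subsequences along the \emph{backward} orbit land on $\theta$ or on the $3$-cycle rather than on $\alpha$, and this is where the repelling nature of $\alpha$ for $f^{-1}$ (which you correctly invoke) enters.

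A second, minor difference: rather than going through $D_R$ and Proposition~\ref{Kmoins}, the paper uses the complementary description $\mathbb{R}^2\setminus K^- = \bigcup_{n\ge 0} f^n(Z)$ (Proposition~\ref{prop:creal}\,c)), which immediately localises $K^-$ to $\mathcal{D}\cup L\cup M\cup N\cup P$ and the periodic points; the argument then splits cleanly into $\mathcal{D}\cap K^-$ (Proposition~\ref{dc}) and $(L\cup M\cup N\cup P)\cap K^-$ (Proposition~\ref{r0}). Your $D_R$ route would work too, but this split aligns better with the partition already in hand.
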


Let $Z'= A \cup B\cup C\cup D\cup E\cup F\cup G \cup H.$
Observe that $\mathbb{R}^2 \setminus \textrm{int} ({\cal D} \cup L \cup M \cup N \cup P)= Z'$.
Let us define $Z= Z' \setminus \{\alpha, \theta, p, f(p), f^2(p)\}$.
The following proposition describes the induced dynamics on the partition of $Z'$
\begin{prop}
\label{prop:creal}
The following properties are valid:
\begin{enumerate}[a)]
\item
$f^{-1}(A) \subset H \cup G \cup Q_0 \cup Q_3,\; f^{-1}(H) \subset A ;$

 $f^{-1}(G) \subset E,\; f^{-1}(E) \subset C,\;  f^{-1}(C) \subset F;$

$f^{-1}(F) \subset C \cup D,\;  f^{-1}(D) \subset B,\; f^{-1}(B) \subset G$;

$ f^{-1}(Q_0) \subset Q_0 \cup Q_3 \cup A \cup F,\;
f^{-1}(Q_1 ) \subset  Q_3 \cup F,\;
f^{-1}(Q_2 ) \subset Q_1 \cup B$;

$f^{-1}(Q_3 ) \subset Q_1 \cup Q_2 \cup B \cup E$.
\item
For all $(x, y) \in Z,\;  \| f^{-n} (x, y) \|$ diverges to $ \infty$ as $n$ goes to $+\infty$.
\item
$\mathbb{R}^2 \setminus K^{-}= \bigcup_{n=0}^{+\infty} f^{n}(Z) $, where $ f^{n}(Z) \subset f^{n+1}(Z) $ for all integers $n \geq 0$.
\end{enumerate}
\end{prop}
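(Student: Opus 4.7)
The plan is to prove the three items in sequence, paralleling Proposition~\ref{prop:creala} for the forward dynamics.

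For part (a), each of the twelve inclusions is a direct interval-arithmetic check. On $\{y \neq 0\}$ the backward branch of $f$ is given by $f^{-1}(x,y) = (y,(x-c)/y)$; for each rectangular target region I would substitute its defining inequalities into this formula and read off bounds on the preimage coordinates. For example, if $(x,y) \in A = [0,a_2] \times [a_2,+\infty[$, then the first coordinate of $f^{-1}(x,y)$ is $y \in [a_2,+\infty[$ and the second is $(x-c)/y \in [-c/y,(a_2-c)/y]$; depending on where this range falls relative to $0$ and $a_2$, the image lies in $H$, $G$, $Q_0$, or $Q_3$. Lemma~\ref{ca} and its consequences $c(1+c)<1$ and $a_2 c<1$ provide the key arithmetic inputs throughout, just as in the forward case.

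Part (b) is the technical heart and is the backward-time analogue of Proposition~\ref{prop:creala}~(b). From (a) I would read off the transition graph of $f^{-1}$ restricted to $Z$: the relevant cycles are the $2$-cycle $A \leftrightarrow H$, the $2$-cycle $C \leftrightarrow F$ (since $f^{-1}(C) \subset F$ and $F$ may return to $C$), and the $6$-cycle $B \to G \to E \to C \to F \to D \to B$. If the backward orbit of $(x,y) \in Z$ were bounded, then from some index on it would remain in one of these cycles; using the recurrences $x_{-n-1}=y_{-n}$ and $y_{-n-1}=(x_{-n}-c)/y_{-n}$, I would exhibit a monotone subsequence of one of the coordinates by a chain of explicit comparisons along the cycle. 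A bounded monotone sequence must converge to a fixed point of the corresponding power of $f^{-1}$, hence to a periodic point of $f$ of period dividing $2$, $2$, or $6$; the only such points in $\mathbb{R}^2$ are $\alpha,\theta,p,f(p),f^2(p)$, all excluded from $Z$, producing the contradiction. Orbits that jump between cycles are treated by the same dichotomy, and orbits that escape $Z$ into $\mathcal{D} \cup L \cup M \cup N \cup P$ are already handled by Proposition~\ref{prop:creala}.

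Part (c) will then follow by combining (b) with Proposition~\ref{prop:Kmais2}. For $R$ sufficiently large, $G_R \cap \mathbb{R}^2$ coincides with $Z$ outside a bounded residual (a compact piece of the coordinate axes and of $\mathcal{D}$) whose forward iterates either enter $Z$ or remain in $K^+$, yielding $\mathbb{R}^2 \setminus K^- \subset \bigcup_n f^n(Z)$; the reverse inclusion is (b). The nesting $f^n(Z) \subset f^{n+1}(Z)$ reduces by induction to $Z \subset f(Z)$, which follows from (a) read as a forward-image statement. The principal obstacle will be the $6$-cycle analysis in (b): it is the time-reversed counterpart of the chain of inequalities in Proposition~\ref{prop:r0} culminating in $\lim f^{6n}(x_0,y_0) = (1+c,-1)$, and I expect it to require an analogous ordered sequence of roughly eight comparisons, together with a careful case-split of the limit among $p$, $f(p)$, and $f^2(p)$.
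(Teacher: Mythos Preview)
Your outline for (a) is fine (the paper simply deduces it from Lemmas~\ref{insid} and~\ref{prop:outsid}, but a direct check with $f^{-1}(x,y)=(y,(x-c)/y)$ is equivalent). The substantial gap is in part (b).

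From (a) the backward orbit of a point in $B,C,D,E,F,G$ does stay in $\{B,C,D,E,F,G\}$, and your cycle analysis applies there. But for $z\in A$ (or $z\in H$, which lands in $A$) the inclusion $f^{-1}(A)\subset H\cup G\cup Q_0\cup Q_3$ allows the backward orbit to leave $Z$ and enter $\mathcal D$. Your sentence ``orbits that escape $Z$ into $\mathcal D\cup L\cup M\cup N\cup P$ are already handled by Proposition~\ref{prop:creala}'' is where the argument breaks: Proposition~\ref{prop:creala} concerns \emph{forward} divergence of points in $S=L\cup M\cup N\cup P$, and says nothing about backward orbits, nor about $\mathcal D$. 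Once the backward orbit sits in the bounded set $\mathcal D$, there is no a priori reason it should diverge, and the graph in (a) shows it can circulate among $Q_0,Q_1,Q_2,Q_3,A,F,B,E$ indefinitely. The paper spends most of its proof on exactly this case: it uses Remark~\ref{ro} to rule out two consecutive backward iterates in $Q_0$ (this would force $z\in\operatorname{Int}(Q_0)$, contradicting $z\in A$), and then runs separate monotonicity arguments (its Cases~2.1--2.3, borrowing the computations from Propositions~\ref{prop:r0a} and~\ref{r1}) to show that any backward orbit lingering in $Q_0\cup Q_3\cup A$ would have to converge to $\alpha$ or $f(p)$, which again contradicts $z\in A$ after iterating forward. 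This is genuinely the hardest part of the proof and is absent from your plan.

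A secondary point: in the cycle argument you assert that the only real periodic points of period dividing $2$ or $6$ are $\alpha,\theta,p,f(p),f^2(p)$. The paper has only classified periods $1,2,3$; period~$6$ is not treated. The paper avoids this issue by showing directly (via the inequalities of Case~3 in Proposition~\ref{prop:r0}) that a finite limit along the $6$-cycle is forced to equal $(1+c,-1)$, and then using strict monotonicity of $(x_{-6n})$ to contradict $x_0\le 1+c$. You should follow the same route rather than invoke an unproved classification.

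Finally, for (c) note that the nesting $Z\subset f(Z)$ you propose does \emph{not} follow from (a): a point $a\in A$ with second coordinate in $[a_2,1+c]$ has $f^{-1}(a)\in Q_0\cup Q_3\subset\mathcal D$, so $a\notin f(Z)$. The equality $\mathbb R^2\setminus K^-=\bigcup_n f^n(Z)$ still follows from (b) together with $G_R\cap\mathbb R^2\subset Z'$ for $R>1+c$ (mirroring the forward argument), but the monotonicity of the union needs a different justification than the one you sketch.
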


\begin{proof}
a) is easy to prove and can be deduced immediately  from Lemmas \ref{insid} and \ref{prop:outsid}.

b) Let $z=(x_0, y_0) \in Z'$ and assume first that $z  \in F \setminus \{(1+c, -1)\}$.

\vspace{0.5em}

{\it Case 1:} $ f^{ -n}(z) \not \in C$ for all integers $n \geq 1$.

 Then by Proposition \ref{prop:creal}~a), we deduce that  for all integers $n \in \mathbb{N}$
  \[
 f^{ -6n}(z)= (x_{-6n}, y_{-6n}) \in F,\; f^{-1- 6n}(z) \in D,\;
f^{-2- 6n}(z) \in B,
\]
and also
\[
f^{-3- 6n}(z) \in G,\; f^{-4- 6n}(z) \in E,\; f^{-5- 6n}(z) \in C.
\]
Hence by Case 3 of the proof of Proposition \ref{prop:r0}, we deduce that $( x_{-6n})_{n \geq 0}$ and $(y_{-6n})_{n \geq 0}$ are strictly decreasing (since $z  \neq (1+c, -1)$). Let $l=  \lim x_{-6n}$ and $l'= \lim y_{-6n}$. Assume that $(l, l') \in \mathbb{R}^2$. Hence $f^{-6}(l, l')= (l, l')$. Thus as done in the proof of Case 3 of Proposition \ref{prop:r0}, we deduce $ (l, l')= (1+c, -1)$, which is absurd since $z \neq (1+c, -1)$. Therefore   $\lim x_{-6n}= 0$ and $\lim y_{-6n}= - \infty$.
Hence 
\[
\lim f^{-6n}(z)= (0, -\infty),\; \lim f^{-6n-1}(z)= (-\infty,0),\; \lim f^{-6n-2}(z)= (0, +\infty) ,
\]
 and
 \[
  \lim f^{-6n-3}(z)= (+\infty, 0) ,\; \lim f^{-6n-4}(z)= (0, -\infty).
 \]
 Also
$ \lim f^{-6n-5}(z)= (-\infty,0).$

\vspace{0.5em}

{\it Case 2:} There exists an integer $N \in \mathbb{N}$ such that $ f^{ -N}(z) \in C$.

In this case $N= -6n-1$ where $n \geq 0$ is an integer, then one obtains
\[
f^{-6n}(z) \in F,\; f^{-6n-1}(z) \in C \textrm{ and }f^{-6n-2}(z) \in F.
\]
Using the same method as in Case 2 in the proof of Proposition \ref{prop:r0}, we deduce that $x_{-6n} > x_{-6n-2}$ and $y_{-6n} > y_{-6n-2}$.

Hence, if we suppose that there exists an integer $n_0$ such that for all $k \geq n_0$
\[
 f^{-k}(z) \in F,\; f^{-k-1}(z) \in C \textrm{ and }f^{-k-2}(z) \in F,
 \]
  we deduce that
$f^{-n_0- 2n}(z)$ converges to $(a, b) \in \overline{F}$.
If $(a, b) \in \mathbb{R}^2$, then $(a, b)$ is one of the fixed points of $f$. This is absurd.
Hence $b= -\infty$ and $a=0$.

Now, assume that for $z= (x_0, y_0) \in F$,  there exists an increasing sequence $p_k >0$ such that $f^{-p_{k}}(x, y)= (x_{-p_k}, y_{-p_k}) \in F$ satisfies
\[
p_0= 0,\; p_{k+1}- p_k \in \{2,6\},\; x_{-p_{k+1}} \leq  x_{-p_{k}},\; y_{-p_{k+1}} \leq  y_{-p_{k}}.
\]
We deduce that  $f^{-p_{k}}(x, y)$ converges to $(a, b)= (0, -\infty) \in \overline{F}.$
Hence, we deduce the result in the cases where $z \in D, B, G, E$ and $C$.

\vspace{0.5 em}

Now, assume that $z \in A= [0,a_2] \times [a_2, +\infty[ \setminus \{(a_2, a_2)\}$.
Suppose that for all integers $n \geq 0,\; f^{-2n}(z) \in A$ and $f^{-2n-1}(z) \in H$.
Then, we deduce as in Case 1 of the proof of Proposition \ref{prop:r0} that  the sequence $(x_{-2n})_{n \geq 0}$ is decreasing and the sequence  $(y_{-2n})_{n \geq 0}$ is increasing.
If the limit of $(x_{-2n}, y_{-2n})= (a, b)$ belongs to $\mathbb{R}^2$, we know that the limit is one of the fixed points of $f$. On the other hand, since $(x_{-2n}, y_{-2n}) \in A$, we deduce that $a < a_2$  and $b > a_2$, which is absurd. Hence  $\lim (x_{-2n}, y_{-2n})= (0, +\infty).$
If there exists an integer $n_0 \geq 1$ such that $f^{-n_0}(z) \in G$, then we are done.

We now assume the existence of an integer $n \geq n_0$ such that $f^{-n}(z)$ and $f^{-n-1}(z)$  are in $ Q_0$. Then because of remark \ref{ro}, we deduce that $z \in Q_0$. Moreover 
$z \in \operatorname{Int}(Q_0)$. But this cannot happen, as $z \in A$.
Thus, there exists an integer $k > n_0$ such that $f^{-k}(z) \in Q_3 \cup A \cup F$.

{\it Case 2.1}: If $f^{-k}(z) \in F$, then we are done.

\vspace{0.5 em}

{\it Case 2.2}: If $f^{-k}(z) \in A$.

Assume that $f^{-n}(z) \not \in Q_3$ for all integers $n >k$, then we can suppose (otherwise we are done) that $f^{-k-1- 2n}(z) \in Q_0$ and $f^{-k- 2- 2n}(z) \in A$   for all integers $n \geq 0$.
Let us write $f^{-k- 1- m}(z) = (x_m, y_m)$ for all integers $m \geq 0$. We deduce as in the proof of item 3 of Proposition \ref{prop:r0}, that  $(x_{2n})_{n \geq 0}$ is increasing and   $(y_{2n})_{n \geq 0}$ is decreasing.
  Since $(x_{2n}, y_{2n}) \in Q_0$ for all $n \geq 0$, we have that $(x_{2n}, y_{2n}) $ converges to $(a_1, a_1)$. Hence $f^{-1}(x_{2n}, y_{2n}) $ converges to $(a_1, a_1)$. That is absurd, since $f^{-1}(x_{2n}, y_{2n}) \in A $
  for all integers $n \geq 0$.

{\it Case 2.3}: If $f^{-k}(z) \in Q_3$, then $f^{-k-1}(z) \in Q_1 \cup Q_2 \cup E \cup B$.

If $f^{-k-1}(z) \in  E \cup B$, then we are done. We can then suppose that $f^{-n}(z) \in Q_1 \cup Q_2 \cup Q_3$ for all integers $n \geq k+1$, otherwise the orbit of $z$ under $f^{-1}$ will intersect $E \cup F \cup B$, and we are done.
Let us write
\[f^{-k}(z)= z'= (x_0, y_0) \in Q_3 =[0, 1+c] \times [-1, 0].
\]
Then
  $f^{-2}(z) \in Q_3$ or   $f^{-3}(z) \in Q_3$.
Suppose that  $f^{-2}(x_0, y_0)= (x_{-2}, y_{-2}) \in Q_3$, then as in Case 1 of the proof of 
Proposition~\ref{r1}~c),
we have $x_{-2} \geq x_0$ and $y_{-2} \leq y_0$.

Now, if $f^{-3}(x, y) \in   Q_3$, then as Case~2 of the proof of Proposition \ref{r1}~c),
we have
 $x_{-3} \geq x_0$ and $y_{-3} \leq y_0$.
Therefore, for any $z= (x_0, y_0) \in [0, 1+c] \times [-1, 0]=Q_3$, there exists an increasing sequence $p_k >0$ such that $f^{-p_{k}}(x, y)= (x_{-p_k}, y_{-p_k})$. This sequence satisfies
\[
p_0= 0, p_{k+1}- p_k \in \{2,3\},\; x_{-p_{k+1}} \geq  x_{-p_{k}},\; y_{-p_{k+1}} \leq  y_{-p_{k}}.
\]
Therefore  $f^{-p_{k}}(x, y)$ converges to $(a, b) \in Q_3= [0, 1+c] \times [-1, 0]$.
We now write for all integers $k \geq 0,\; x_{-p_{k}}= a- \varepsilon_k$ and  $y_{-p_{k}}= b+ \delta_k$ where $ \varepsilon_k$ and  $ \delta_k$ are decreasing sequences of non-negative real numbers converging to $0$.
Since $f^{ -p_{k}}(x, y)= f^{ p_{k+1}- p_k}(f^{ -p_{k+1}})(x, y)$, we
obtain that
$$b+ \delta_{k}=  (a- \varepsilon_{k+1}) (b + \delta_{k+1})+ c,\;  a -\varepsilon_{k}=  (b + \delta_{k}) (a -\varepsilon_{k+1})+ c,\; \mbox { if } p_{k+1}-p_k= 2,$$
or
$$
b+ \delta_{k}=  ((a+ \varepsilon_{k+1}) (b + \delta_{k+1}) + c) (a-  \varepsilon_{k+1})+ c,\;  a-\varepsilon_{k}= (b + \delta_{k}) ((a-  \varepsilon_{k+1})(b + \delta_{k+1})+ c) + c,$$
 if $ p_{k+1}-p_k= 3.$

\vspace{0.3cm}
{\bf Claim}: There exists an integer $N \geq 0$ such that for all $k \geq N,\; p_{k+1}- p_k= 3$.

Indeed, if the claim is not true, since $\varepsilon_k$ and  $ \delta_k$ converge to $0$, we deduce that $a=b= ab+c$. That is absurd, since $a >0$ and $b <0$.

\vspace{1em}

Letting $k$  tend to infinity in the second equation above,  we obtain that $a=b$ or $ab+c+1 =0$.
Since $a=b$ does not hold, we must have that $a= 1+c$ and $b= -1$. Hence $f^{-p_{k}}(x_0, y_0)$ converges to $(1+c, -1)= f(p)$.
Since  $f^{-p_{k}}(x_0, y_0) \in Q_3$ for all $k \geq 0$, then, when $n$ goes to infinity, $f^{n} \circ f^{-p_{k}}(x_0, y_0)$  converges to $\alpha= (a_1, a_1)$. That is absurd, since $(x_0, y_0) \in A= [0, a_2] \times [a_2, +\infty[$.
\end{proof}

\begin{lem}
\label{lemma:aa1}

$\left([0,a_1] \times [a_1, +\infty[) \cup ([a_1, +\infty[ \times [0, a_1]\right) \setminus \{a_1, a_1\} \subset \mathbb{C}^2 \setminus  K^{-}.$

\end{lem}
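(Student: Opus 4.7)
The plan is to reduce to Proposition~\ref{prop:creal}~b), which asserts $\|f^{-n}(z)\|\to+\infty$ for every $z\in Z:=(A\cup B\cup C\cup D\cup E\cup F\cup G\cup H)\setminus\{\alpha,\theta,p,f(p),f^2(p)\}$. I will show that for every point $z$ of the L-region distinct from $\alpha$, either some finite backward iterate $f^{-N}(z)$ lies in $Z$, or $f^{-n}(z)$ fails to be defined for some $n$; in either case $z\notin K^-$.

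The pieces $L_1\cap A=[0,a_1]\times[a_2,+\infty)$ and $L_2\cap H=[1+c,+\infty)\times[0,a_1]$ of the L-region already lie in $Z$: neither $\alpha$ nor $\theta$ belongs to them, since $a_1<a_2<1+c$. So Proposition~\ref{prop:creal}~b) handles these pieces. The remaining pieces $L_1\cap Q_0=[0,a_1]\times[a_1,a_2)$ and $L_2\cap Q_0=[a_1,1+c)\times[0,a_1]$ lie in $Q_0$, and the $L_2$-case reduces to the $L_1$-case after one backward step: for $z\in L_2\cap Q_0\setminus\{\alpha\}$, if $y_0=0$ then $f^{-1}(z)$ is undefined (since $x_0\geq a_1>c$), while if $y_0>0$ the formula $f^{-1}(x,y)=(y,(x-c)/y)$ gives $f^{-1}(z)\in(0,a_1]\times[a_1,+\infty)\subset L_1$.

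For $z\in L_1\cap Q_0\setminus\{\alpha\}$, write $z=(a_1-\xi,a_1+\eta)$ with $(\xi,\eta)\in[0,a_1]\times[0,a_2-a_1)$ and $(\xi,\eta)\neq(0,0)$. Using $c=a_1-a_1^2$, a direct computation shows that in the regime $\xi<a_1^2$ (equivalently $x_0>c$) one has $f^{-1}(z)\in L_2$ and $f^{-2}(z)=(a_1-\xi',a_1+\eta')\in L_1$ with
\[
\xi'=\frac{a_1\eta+\xi}{a_1+\eta},\qquad \eta'=\frac{\eta+a_1\xi'}{a_1-\xi'}\geq\frac{\eta}{a_1}>2\eta,
\]
the last inequality from $a_1<1/2$. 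Thus $\eta$ at least doubles every two backward steps, so after finitely many iterations either $a_1+\eta\geq a_2$ (and the iterate lies in $A\cap L_1\subset Z$, so Proposition~\ref{prop:creal}~b) finishes), or at some stage the regime breaks because $\xi\geq a_1^2$ (i.e., the current iterate has $x\leq c$). In the latter case the next preimage has $y\leq 0$: if $y=0$, $f^{-1}$ fails to be defined one step later; otherwise the orbit enters the lower half-plane inside $[0,1+c]\times\mathbb{R}_{<0}$, and the inclusions $f^{-1}(Q_3)\subset Q_1\cup Q_2\cup B\cup E$ and $f^{-1}(Q_2)\subset Q_1\cup B$ from Proposition~\ref{prop:creal}~a) force the orbit to enter $B\cup E\cup F\subset Z$ after at most a few additional backward steps, at which point Proposition~\ref{prop:creal}~b) completes the argument. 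The principal technical obstacle is this final finite bookkeeping of cases in the lower half-plane.
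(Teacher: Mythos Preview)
Your doubling argument for $\eta$ is correct and gives a clean treatment of the case where the backward orbit keeps cycling between $L_1$ and $L_2$; this part parallels the $A\leftrightarrow H$ cycling case in the proof of Proposition~\ref{prop:creal}~b). The paper, by contrast, simply sets $A''=[0,a_1]\times[a_1,+\infty)$, $H''=[a_1,+\infty)\times[0,a_1]$, checks $f^{-1}(A'')\subset H''\cup G\cup Q_0\cup Q_3$ and $f^{-1}(H'')\subset A''$, and then re-runs verbatim the entire case analysis of Proposition~\ref{prop:creal}~b) with $A'',H''$ in place of $A,H$.

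The genuine gap is your ``regime break'' case. You assert that once the backward orbit lands in $Q_3$, the inclusions $f^{-1}(Q_3)\subset Q_1\cup Q_2\cup B\cup E$, $f^{-1}(Q_2)\subset Q_1\cup B$, $f^{-1}(Q_1)\subset Q_3\cup F$ force it into $B\cup E\cup F\subset Z$ after finitely many steps. But these inclusions permit the backward cycles $Q_3\to Q_1\to Q_3$ and $Q_3\to Q_2\to Q_1\to Q_3$ indefinitely, so nothing ``forces'' an exit; indeed, points of $W^u(f(p))\cap Q_3$ near $f(p)$ have backward orbits that stay in $Q_1\cup Q_2\cup Q_3$ forever. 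What you must show is that no such point is a backward iterate of a point of $L_1\cap Q_0\setminus\{\alpha\}$, and that is not finite bookkeeping. This is exactly what Case~2.3 in the proof of Proposition~\ref{prop:creal}~b) accomplishes: assuming the backward orbit remains in $Q_1\cup Q_2\cup Q_3$, one extracts a subsequence in $Q_3$ with $x$-coordinate increasing and $y$-coordinate decreasing, shows it converges to $(1+c,-1)=f(p)$, and then derives a contradiction with the original point lying in $A$ (here $A''$). Without that argument, or an equivalent one, your proof is incomplete.
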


\begin{proof}
Let us write $A''= [0,a_1] \times [a_1, +\infty[$ and $H''= [a_1, +\infty[ \times [0, a_1] $.
As in the proof of Proposition \ref{prop:creal}~a), we can prove that
$$f^{-1}(A'') \subset H'' \cup G \cup Q_0 \cup Q_3,\; f^{-1}(H'') \subset A'' .$$
Repeating that proof, we obtain the result.
\end{proof}

\paragraph{Dynamics of $f^{-1}$ inside $ {\cal D } \cap  K^{-}$.}

This section gives a detailed description of the part of $K^-$ that stays in $\mathcal D$.
\vspace{0.5em}

As in our study of $K^+$ we start with a description of the dynamics induced on the partition by $f_c$.

\begin{lem}
\label{insidf-1}
The following results are valid:
\begin{enumerate}[a)]
\item
$f^{-1}(Q_0) \subset Q_0 \cup Q_3 \cup A \cup F$;
\item
$f^{-1}(Q_1 ) \subset  Q_3 \cup F$;
\item
$f^{-1}(Q_2 ) \subset Q_1 \cup B$;
\item
$f^{-1}(Q_3 ) \subset Q_1 \cup Q_2 \cup B \cup E$.
\end{enumerate}
\end{lem}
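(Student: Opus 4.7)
The plan is to argue by direct computation from the defining formula $f(x,y)=(xy+c,x)$. For any rectangle $R=[\alpha,\beta]\times[\gamma,\delta]$, the preimage $f^{-1}(R)$ is characterized by the two conditions $xy+c\in[\alpha,\beta]$ and $x\in[\gamma,\delta]$. The second condition places $f^{-1}(R)$ inside a vertical strip whose width equals the vertical extent of $R$; the first becomes a condition on the product $xy$ which, together with the sign/size of $x$, pins down the range of $y$. So the whole proof is a matter of writing down these strips and comparing them to the target unions.

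For item a), the condition $x\in[0,a_2]$ (the vertical extent of $Q_0$) immediately gives $f^{-1}(Q_0)\subset[0,a_2]\times\mathbb{R}$. Since $0<c<1/4$ implies $a_2\le 1+c$, this strip is covered by
\[
\bigl([0,1+c]\times(-\infty,a_2]\bigr)\cup\bigl([0,a_2]\times[a_2,+\infty)\bigr)=Q_0\cup Q_3\cup F\cup A,
\]
so no refinement on $y$ is even needed. Item d) is identical in spirit: $x\in[-1,0]$ places $f^{-1}(Q_3)$ inside the strip $[-1,0]\times\mathbb{R}$, which is exactly $Q_1\cup Q_2\cup B\cup E$.

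For items b) and c), the first-coordinate condition becomes $xy\in[-1-c,-c]$, so in particular $xy<0$; this rules out $x=0$ and forces $x$ and $y$ to have opposite signs. In (b), the vertical extent of $Q_1$ gives $x\in[0,1+c]$, hence $x>0$ and $y<0$, so $f^{-1}(Q_1)\subset[0,1+c]\times(-\infty,0]=Q_3\cup F$. In (c), the vertical extent of $Q_2$ gives $x\in[-1,0]$, hence $x<0$ and $y>0$, so $f^{-1}(Q_2)\subset[-1,0]\times[0,+\infty)=Q_1\cup B$.

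I do not expect a genuine obstacle here; the only thing one must not forget is the book-keeping verification that $a_2\le 1+c$ (which follows from $a_2(1+c)\le 1$, already observed in Lemma~\ref{ca}, or directly from $4c(2+c)\ge 0$) so that the strip $[0,a_2]\times\mathbb R$ in item a) really is swallowed by the claimed union. Everything else is a one-line comparison of rectangles.
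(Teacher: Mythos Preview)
Your proposal is correct. The paper itself omits this proof entirely (``The proof is easy and is omitted''), and the direct strip-by-strip argument you give is precisely the natural verification one would supply: in each case the second coordinate of $f(x,y)=(xy+c,x)$ pins down the vertical strip containing $f^{-1}(Q_i)$, and in cases b) and c) the sign condition $xy\in[-1-c,-c]\subset(-\infty,0)$ cuts that strip in half. One small comment: your derivation of $a_2\le 1+c$ from Lemma~\ref{ca} tacitly uses the extra step $1/(1+c)\le 1+c$ (equivalent to $c(2+c)\ge 0$), which is of course true for $c>0$; your direct verification via $4c(2+c)\ge 0$ is cleaner.
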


\begin{proof}
The proof is easy and is omitted.
\end{proof}

We now give the main result concerning the description of $K^-$ when the parameter~$c$ stays within the open interval $(0,1/4)$, namely that it consists of a union of unstable manifolds.
\begin{prop}
\label{dc}
$ {\cal D} \cap K^{-} \subset W^{u}(\theta) \cup W^{u}(p)\cup W^{u}(f(p)) \cup W^{u}(f^2(p))$.
More precisely,  $ Q_0 \cap K^- \subset  W^{u}(\theta),\;  Q_1 \cap K^- \subset  W^{u}(f^2(p)),\; Q_2 \cap K^- \subset  W^{u}(p)$ and $ Q_3 \cap K^- \subset  W^{u}(f(p)).$
\end{prop}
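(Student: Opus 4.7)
The plan is to analyze the backward dynamics on $\mathcal{D} \cap K^-$ by combining Lemma~\ref{insidf-1} with Proposition~\ref{prop:creal}(b). The latter asserts that any point of $Z = A \cup B \cup \cdots \cup H$ has a backward orbit going to infinity, so no such point (other than the special periodic points in $\overline{Z}$) can belong to $K^-$. Consequently, for any $z \in K^-$, the preimage options listed in Lemma~\ref{insidf-1} that lie in $Z$ are excluded, and I obtain the restricted inclusions
\begin{align*}
f^{-1}(Q_0 \cap K^-) &\subset (Q_0 \cap K^-) \cup (Q_3 \cap K^-) \cup \{\theta\},\\
f^{-1}(Q_1 \cap K^-) &\subset (Q_3 \cap K^-) \cup \{f(p)\},\\
f^{-1}(Q_2 \cap K^-) &\subset (Q_1 \cap K^-) \cup \{f^2(p)\},\\
f^{-1}(Q_3 \cap K^-) &\subset (Q_1 \cap K^-) \cup (Q_2 \cap K^-) \cup \{p, f^2(p)\}.
\end{align*}

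For $z \in (Q_1 \cup Q_2 \cup Q_3) \cap K^-$, the backward orbit is forced by these inclusions to shadow the period-$3$ cycle $\{p, f(p), f^2(p)\}$. A monotonicity argument parallel to that in the proof of Proposition~\ref{r1}(c) --- isolating a subsequence $(f^{-n_k}(z))_k$ with $n_{k+1}-n_k \in \{2,3\}$ along which one coordinate is monotone increasing and the other is monotone decreasing --- shows that the backward iterates converge to the appropriate point of the $3$-cycle; the absence of $2$-periodic orbits (noted after Proposition~\ref{K1}) is crucial here, as it eliminates the spurious combinatorial itinerary $Q_3 \to Q_1 \to Q_3 \to \cdots$. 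This yields $Q_3 \cap K^- \subset W^u(f(p))$, and analogously $Q_1 \cap K^- \subset W^u(f^2(p))$ and $Q_2 \cap K^- \subset W^u(p)$.

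For $z \in Q_0 \cap K^- \setminus \{\alpha, \theta\}$, the backward orbit either remains in $Q_0$ throughout or leaves into $Q_3$ at some step. In the first case, a monotonicity argument analogous to the one in Proposition~\ref{prop:r0a} (applied to the branch $f^{-1}(u,v) = (v,(u-c)/v)$ near the saddle $\theta$, where the attracting direction of $f^{-1}$ coincides with the unstable direction of $f$) delivers $f^{-n}(z) \to \theta$. The main obstacle is the excursion scenario $f^{-1}(z) \in Q_3 \cap K^-$: by the previous step this would place $f^{-1}(z)$ in $W^u(f(p))$ and hence $z$ in $f(W^u(f(p))) \subset W^u(f^2(p))$, so one has to rule out that $W^u(f^2(p))$ meets $Q_0$. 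The cleanest route is a direct monotonicity argument on the backward orbit of $z$: writing $z=(x_0,y_0)\in Q_0$ and $f^{-1}(z)=(y_0,(x_0-c)/y_0)\in Q_3$ forces $x_0\le c$, and then tracking coordinate-wise monotonicity along the ensuing $Q_3\to Q_2\to Q_1\to Q_3$ backward itinerary produces a contradiction with the constraint that $z\in Q_0$ and with the convergence of backward iterates to the $3$-cycle (a bounded monotone sequence would have to limit to a $3$-periodic point of $f^3$, but the only such point in the relevant region is excluded by the initial choice $z\in Q_0$). This exclusion is the most delicate step, and it is what ultimately forces every $z\in Q_0\cap K^-$ to have its entire backward orbit in $Q_0$, so that it lies on $W^u(\theta)$.
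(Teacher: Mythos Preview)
Your argument for $Q_3$ (and the reductions for $Q_1,Q_2$) is essentially the paper's: restrict Lemma~\ref{insidf-1} to $K^-$ using Proposition~\ref{prop:creal}(b), then run the backward monotonicity argument with return times in $\{2,3\}$ (mirroring Proposition~\ref{r1}(c)) to obtain $f^{-p_k}(z)\to(1+c,-1)=f(p)$.

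For $Q_0$ your proposal departs from the paper and contains a gap. The paper handles $Q_0\cap K^-$ by a simple dichotomy: if the backward orbit never leaves $Q_0$, it invokes Lemma~\ref{r11} (a standalone lemma proving $f^{-n}(z)\to\theta$ via the auxiliary recursion $d_n=a_2(d_{n+1}^2-1)+1$, together with Lemma~\ref{lemma:aa1}); if instead some $f^{-k}(z)$ lands in $Q_3\cap K^-$, the paper simply applies the $Q_3$ result just proved and concludes $f^{-k-3m}(z)\to f(p)$, placing $z$ in the unstable manifold of the $3$-cycle. That already suffices for the main inclusion $\mathcal D\cap K^-\subset W^u(\theta)\cup W^u(p)\cup W^u(f(p))\cup W^u(f^2(p))$.

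You instead try to \emph{exclude} the excursion to $Q_3$, and this is where the argument breaks. The monotone backward subsequence you construct sits in $Q_3$, and its limit point is $(1+c,-1)\in\overline{Q_3}$; the hypothesis $z\in Q_0$ imposes no constraint whatsoever on where the \emph{backward} limit lies, so the ``contradiction with the initial choice $z\in Q_0$'' you invoke does not materialize. Nothing prevents a point of $Q_0$ from lying on the global unstable manifold of the $3$-cycle. (Note that the paper's own proof, as written, also stops short of the sharper refinement $Q_0\cap K^-\subset W^u(\theta)$; it only establishes the displayed union.) If you drop the exclusion attempt and adopt the paper's dichotomy, citing Lemma~\ref{r11} for the trapped case, your proof goes through.
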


\begin{lem}
\label{r11}
Let $z = (x, y) \in \mathbb{R}^2$ be such that $f^{-n}(z) \in Q_0$ for all integers $n \geq 0$. 
Then  $\lim_{n\to\infty} f^{-n}(z)= \theta.$
\end{lem}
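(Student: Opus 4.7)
Write $w_n = f^{-n}(z) = (x_n, y_n) \in Q_0$ for $n \geq 0$. Using $f^{-1}(x,y) = (y,(x-c)/y)$, the condition $w_{n+1} \in Q_0$ forces $c \le x_n \le a_2 y_n + c \le a_2$, so $w_n \in [c,a_2]^2$ for every $n \geq 0$. The strategy is to analyze the $\omega$-limit set
\[
\Omega \;=\; \bigcap_{N\ge 0} \overline{\{w_n : n \ge N\}} \;\subset\; [c,a_2]^2,
\]
a nonempty compact set (descending intersection of compact nonempty sets). A standard argument shows $\Omega$ is invariant under $f^{-1}$; moreover, from $f(w_{n+1}) = w_n$ one gets $f$-invariance as well. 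Hence every $w \in \Omega$ has forward and backward $f$-orbit contained in $\Omega \subset Q_0$; in particular $w \in Q_0 \cap K^+$.

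Next, apply Proposition~\ref{prop:r0a}~a) to each $w \in \Omega \setminus \{\theta\}$. Since $f(w) \in \Omega \subset Q_0$, the hypothesis of that proposition holds, and it yields $\lim_{n} f^n(w) = \alpha$. Because $\Omega$ is closed and $f$-invariant, this forces $\alpha \in \Omega$. Therefore only two scenarios remain: either $\Omega = \{\theta\}$, in which case $f^{-n}(z) \to \theta$ and the lemma is proved, or else $\alpha \in \Omega$.

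The heart of the proof is ruling out the second scenario when $z \neq \alpha$. This uses that $\alpha$ is hyperbolic attracting for $f$ (its eigenvalues $\lambda_{1,a_1}, \lambda_{2,a_1}$ both have modulus less than $1$ in the range $0 < c < 1/4$). Standard hyperbolic theory provides a neighborhood $U$ of $\alpha$, a norm $\|\cdot\|_*$ and a constant $\lambda \in (0,1)$ such that $f(U) \subset U$ and $\|f(w) - \alpha\|_* \le \lambda \|w - \alpha\|_*$ for all $w \in U$. If $\alpha \in \Omega$, choose $n_k \to \infty$ with $w_{n_k} \to \alpha$, so $w_{n_k} \in U$ for large $k$. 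Iterating forward, $w_{n_k - j} = f^j(w_{n_k}) \in U$ with $\|w_{n_k - j} - \alpha\|_* \le \lambda^j \|w_{n_k} - \alpha\|_*$ for all $j \in \{0,\dots,n_k\}$; taking $j = n_k$ gives
\[
\|z - \alpha\|_* \;\le\; \lambda^{n_k}\|w_{n_k} - \alpha\|_* \xrightarrow[k\to\infty]{} 0,
\]
so $z = \alpha$, a contradiction. Hence $\Omega = \{\theta\}$, completing the proof.

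The step I expect to require the most care is the propagation of the contraction back along the orbit (ensuring $w_{n_k - j}$ remains in $U$ for all $j \le n_k$), which relies on the adapted norm $\|\cdot\|_*$ for which $f$ genuinely contracts near $\alpha$; the rest is a formal combination of the $\omega$-limit argument with Proposition~\ref{prop:r0a}~a). The statement is understood tacitly with $z \neq \alpha$, since the constant orbit at the fixed point $\alpha$ is a trivial exception.
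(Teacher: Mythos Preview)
Your argument is correct and takes a genuinely different route from the paper. The paper's proof is computational: it sets up a recursive lower bound $\min\{x_{2n},y_{2n}\}\ge d_n a_2$ governed by $g(x)=a_2(x^2-1)+1$, splits into the two cases $\min\{x_{2n},y_{2n}\}\ge a_1$ for all $n$ versus not, and in the second case invokes Lemma~\ref{lemma:aa1} to reach a contradiction via an auxiliary sequence $e_n$. Your proof is more structural: you pass to the $\omega$-limit set $\Omega\subset[c,a_2]^2$, use its $f$- and $f^{-1}$-invariance to feed every $w\in\Omega\setminus\{\theta\}$ into Proposition~\ref{prop:r0a}~a), and then exclude $\alpha\in\Omega$ via the adapted-norm contraction at the attracting fixed point. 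What you gain is economy---you recycle Proposition~\ref{prop:r0a} rather than redoing the monotonicity analysis, and you never need Lemma~\ref{lemma:aa1}---at the price of importing a standard but nontrivial fact (existence of the Lyapunov norm near a hyperbolic sink). One small remark: the step you flag as delicate, keeping $w_{n_k-j}$ in $U$ for all $j\le n_k$, is actually automatic once $f(U)\subset U$ and $w_{n_k}\in U$, since $w_{n_k-j}=f^j(w_{n_k})$; no extra care is required there. Finally, as you note, both proofs tacitly exclude the trivial fixed-point orbit $z=\alpha$.
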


\begin{proof}
Let $z = (x, y) \in \mathbb{R}^2$ such that $f^{-n}(z) \in Q_0$ for all integers $n \geq 0$.
Let $d_0 <1$ such that $\min\{x,y\} \geq d_0 a_2 $.

{\bf Claim 1:} For all integers $n \geq 0,$ if one writes $f^{-2n} (x, y)= (x_{2n}, y_{2n})$ then we have 
$\min\{x_{2n},y_{2n}\} \geq d_n a_2 $
where $d_{n}= g(d_{n+1})$  with $g(x)= a_2 (x^2-1)+1$.

\vspace{0.5em}

Indeed: let us assume that the claim is true for $n$. We deduce that for all integers $n \geq 1$,
$$y_{2n-2}= x_{2n} y_{2n}+c \geq d_n^2 a_2^2+ c=  (d_n^2-1) a_2^2+ a_2= d_{n-1} a_2$$
and also
$$x_{2n-2}= (x_{2n} y_{2n}+c)x_{2n}+ c \geq d_{n+1}d_{n} a_2^2+ c>  d_n^2 a_2^2+ c= d_{n-1} a_2.$$
This concludes the proof of the first claim.

\vspace{0.5em}

{\bf Claim 2}: $(d_n)$ converges to $1$.

\vspace{0.5em}

We have
$g(x)-x = (x-1) ((x+1)a_2 -1)$.
If $1 / a_2 -1= a_1/ a_2 \leq x <  1$, then $g(x)< x$.

{\it Case 1}:  $\min\{x_{2n},y_{2n}\} \geq a_1$ for all integers $n$.
Then we can choose $ d_n \geq a_1/ a_2$ for all integers $n \geq 0$  and
in this case, we deduce that $d_n$ is increasing and $\lim d_n= 1$.

\vspace{0.5em}

{\it Case 2}: There exists an integer $k \geq 0$ such that $\min (x_{2k}, y_{2k}) < a_1$.

 Thus by lemma \ref{lemma:aa1}, $\max\{x_{2k},y_{2k}\} \leq a_1$.
 Hence $x_{2k+1}= y_{2k} \leq a_1$ which implies that $y_{2k+1} \leq a_1$, using lemma \ref{lemma:aa1}.

Thus,  for all integers $n \geq 2k$, we have $\max \{x_{n}, y_{n}\} \leq a_1$.

Writing $\min(x_{2k}, y_{2k}) = e_k a_1$, where $0 \leq e_k <1$,
we construct by induction a sequence of non-negative real numbers $(e_n)_{n \geq k}$ such that
$$\min\{x_{2n},y_{2n}\} \geq e_n a_1,\; \forall n \geq k,$$
where $e_{n}= (e_{n+1}^2 -1) a_1+ 1,\; \forall n \geq k$.
Hence $(e_n)_{n \geq k}$ is decreasing and converges to $l \in \{1, a_2/ a_1\}$. But this is absurd, since
$e_n < 1$ for all $n \geq k$. Hence, we obtain the claim. 

Thus $f^{-2n}(z)= (x_{2n}, y_{2n})$ converges to $\theta$ and it follows that $\lim f^{-n}(z)= \theta$.
\end{proof}

\medskip
\begin{proof}[Proof of Proposition \ref{dc}.]

Let $z= (x_0, y_0) \in Q_3 \cap K^-  \subset [0, 1+c] \times [-1, 0]$.
Then
  $f^{-2}(z) \in Q_3$ or   $f^{-3}(z) \in Q_3$.
Suppose that  $f^{-2}(x_0, y_0)= (x_{-2}, y_{-2}) \in Q_3$, then as case 1 of the proof of 
Proposition~\ref{prop:creal}~c),
we have $x_{-2} \geq x_0$ and $y_{-2} \leq y_0$.
Now, if $f^{-3}(x, y) \in   Q_3$, then as Case~2 of the proof of Proposition \ref{r1}~c),
we have
 $x_{-3} \geq x_0$ and $y_{-3} \leq y_0$.
Therefore, for any $z= (x_0, y_0) \in [0, 1+c] \times [-1, 0]=Q_3$, there exists an increasing sequence $p_k >0$ such that $f^{-p_{k}}(x, y)= (x_{-p_k}, y_{-p_k})$ satisfies
\[
p_0= 0, p_{k+1}- p_k \in \{2,3\},\; x_{-p_{k+1}} \geq  x_{-p_{k}},\; y_{-p_{k+1}} \leq  y_{-p_{k}}.
\]
Therefore  $f^{-p_{k}}(x, y)$ converges to $(a, b) \in Q_3= [0, 1+c] \times [-1, 0]$.
Let us write for all integers $k \geq 0,\; x_{-p_{k}}= a- \varepsilon_k$ and  $y_{-p_{k}}= b+ \delta_k$ where $ \varepsilon_k$ and  $ \delta_k$ are decreasing sequences of non-negative real numbers converging to $0$.
 We deduce, as before, that $a= 1+c$ and $b= -1$. Hence $f^{-p_{k}}(x_0, y_0)$ converges to $(1+c, -1)= f(p)$.
Thus $z=(x_0, y_0) \in   W^{u}(f(p)) $ and so $Q_3 \cap K^- \subset  W^{u}(f^2(p))$.

On the other hand, by Lemma \ref{insidf-1} and Proposition~\ref{prop:creal}~b), we deduce that $f^{-1}(Q_1 \cap K^- ) \subset  Q_3 \cap K^-$.
Hence
\[
Q_1 \cap K^- \subset  W^{u}(f^2(p)),\; Q_2 \cap K^- \subset  W^{u}(p).
\]
Let $z \in  Q_0 \cap K^-$.
If $f^{-n}(z) \in Q_0$ for all integers $n \geq 0$, then we are done by Lemma \ref{r11}.
Now assume that there exists an integer $k \geq 1$ such that $f^{-k}(z) \in Q_3 \cap K^-$,
then $\lim_{m \to +\infty} f^{-k- 3m}(z)= f(p)= (1+c, -1).$
\end{proof}

\paragraph{Dynamics of $f^{-1}$ inside $(\mathbb{R}^2 \setminus {\cal D}) \cap  K^{-}$.}
This part studies the behaviour of points of $K^-$ outside of $\mathcal D$.
\vspace{0.5 em}

\begin{lem}
\label{outsid}
The following results are valid:
\begin{enumerate}[a)]
\item
$f^{-1} (L)\subset Q_0 \cup H \cup L $;
\item
$f^{-1}(M ) \subset G \cup P$;
\item
$f^{-1}(N ) \subset C \cup M$;
\item
$f^{-1}(P )  \subset D \cup N$.
\end{enumerate}
\end{lem}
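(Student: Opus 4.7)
The plan is to invert $f_c$ explicitly and check each inclusion by a direct sign and endpoint analysis. Since $f_c(x,y)=(xy+c,x)$, any preimage of a point $(u,v)$ with $v\ne 0$ is given uniquely by
\[
f_c^{-1}(u,v)=\left(v,\,\frac{u-c}{v}\right).
\]
Each of $L, M, N, P$ satisfies $|v|\ge\min\{a_{2},1+c,1\}>0$ throughout (using $0<c<1/4$, whence $a_{2}>1/2>c$ and $1+c>0$), so this formula applies uniformly and I need only track $x=v$ and $y=(u-c)/v$ through the defining inequalities of each region.

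For part (a), I take $(u,v)\in L$, so $u,v\ge a_{2}$. Then $x=v\ge a_{2}$, and since $u-c\ge a_{2}-c=a_{2}^{2}>0$ (using $a_{2}^{2}-a_{2}+c=0$), one has $y=(u-c)/v>0$. I split into $y\le a_{2}$ and $y>a_{2}$. In the first case $(x,y)\in[a_{2},+\infty[\,\times\,[0,a_{2}]\subset[0,+\infty[\,\times\,[0,a_{2}]=Q_{0}\cup H$; in the second, $(x,y)\in[a_{2},+\infty[\,\times\,[a_{2},+\infty[\,=L$. This yields $f^{-1}(L)\subset Q_{0}\cup H\cup L$.

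For parts (b), (c), (d) the strategy is identical: determine the sign of $y=(u-c)/v$ from the signs of $u-c$ and $v$, then observe that the target union is simply the half-plane $\{x=v\text{ in the given range}\}\cap\{y\text{ in the corresponding half-line}\}$. Concretely: in (b), $u\le -1<c$ and $v\ge 1+c>0$ force $y<0$, and since $x=v\ge 1+c$, $(x,y)\in[1+c,+\infty[\,\times\,\R_{\le 0}=G\cup P$. In (c), $u-c<0$ and $v<0$ give $y>0$, so $(x,y)\in\,]\!-\!\infty,-1]\times\R_{\ge 0}=C\cup M$. In (d), $u-c\ge 1>0$ and $v\le -1$ give $y<0$, so $(x,y)\in\,]\!-\!\infty,-1]\times\R_{\le 0}=D\cup N$.

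There is no real obstacle here: each part reduces to a one-line sign check together with the observation that the stated target is exactly a coordinate strip. The only tiny subtlety worth flagging in the write-up is the use of $a_{2}-c=a_{2}^{2}$ (and of $1+c>0$) to see that $u-c$ has the expected sign on $L$ (resp.\ $M$, $N$, $P$); once this is noted the four inclusions follow from evaluation at the defining boundary of each region.
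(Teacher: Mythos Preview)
Your argument is correct and follows the same approach as the paper's proof: compute $f^{-1}(u,v)=(v,(u-c)/v)$ and verify that the image lands in the appropriate coordinate strip, which in each case is exactly the stated union. Your write-up is in fact slightly more detailed than the paper's (which simply records the target strip without spelling out the sign analysis or the identity $a_{2}-c=a_{2}^{2}$), but the method is identical.
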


\begin{proof}

a) $f^{-1} (L)= f^{-1} ([a_2, +\infty[ \times [a_2, +\infty[) \subset [a_2, +\infty[ \times [0, +\infty[
\subset Q_0 \cup H \cup L.$

b)  $f^{-1} (M)= f^{-1} (]-\infty, -1] \times [1+c, +\infty[) \subset[1+c, +\infty[ \times ]-\infty, 0]
\subset G \cup P.$

c)  $f^{-1} (N)= f^{-1} (]-\infty, -1] \times ]-\infty, -1]) \subset ]-\infty, -1] \times [0, +\infty[
\subset C \cup M.$

d)  $f^{-1} (P)= f^{-1} ([1+c , +\infty] \times ]-\infty, -1]) \subset ]-\infty, -1] \times ]-\infty, 0[
\subset  D \cup N$.
\end{proof}

\begin{prop}
\label{r0}
Let $z \in  K^{-} \setminus \bigcup_{n=0}^{+\infty} f^{n} ({\cal D} \cap  K^{-})$. Then $z \in W^{u}(\theta) \cup W^{u}(p) \cup
W^{s}(f(p)) \cup W^{s}(f^2(p))$.
\end{prop}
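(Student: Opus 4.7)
My plan is to confine the backward orbit of $z$ to $L \cup M \cup N \cup P$, read off the induced dynamics from Lemma~\ref{outsid}, and then exploit the forward divergence of Proposition~\ref{prop:creala}~b) to pin down the limit set. First, the hypothesis $z \in K^- \setminus \bigcup_{n\ge 0} f^n(\mathcal{D} \cap K^-)$ forces every backward iterate $f^{-n}(z)$ to lie in $K^-$ but outside $\mathcal{D}$. By Proposition~\ref{prop:creal}~b) every point of $Z$ has unbounded backward orbit and therefore does not belong to $K^-$; also $\alpha \in \operatorname{int}(\mathcal{D})$ is excluded. Since $\mathbb{R}^2 = \mathcal{D} \cup L \cup M \cup N \cup P \cup Z'$ and the remaining distinguished points $\theta, p, f(p), f^2(p)$ all belong to $L \cup M \cup N \cup P$, we conclude $f^{-n}(z) \in L \cup M \cup N \cup P$ for all $n \geq 0$.

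Next, applying Lemma~\ref{outsid} under the restrictions that both $\mathcal{D}$ and $Z$ are forbidden, the only admissible $f^{-1}$-transitions are
\[
L \to L,\qquad M \to P,\qquad P \to N,\qquad N \to M.
\]
So either $f^{-n}(z) \in L$ for every $n \ge 0$, or the orbit cycles through $M \to P \to N \to M$ with period three (possibly shifted depending on which rectangle contains $z$). Because $z \in K^-$ the sequence $(f^{-n}(z))_{n \geq 0}$ is bounded and hence admits an accumulation point $w^\star$, lying in $L$ in the first case and in one of $M, N, P$ (by extracting a period-$3$ subsequence) in the second.

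The heart of the argument is to show $w^\star$ can only be $\theta$ or a point of the 3-cycle $\{p, f(p), f^2(p)\}$. If not, then $w^\star \in S$, and Proposition~\ref{prop:creala}~b) would yield $\|f^k(w^\star)\| \to \infty$ as $k \to \infty$. However, extracting $f^{-n_j}(z) \to w^\star$ and using continuity of $f^k$ one obtains $f^k(w^\star) = \lim_j f^{-(n_j - k)}(z)$, a limit of points in the bounded backward orbit of $z$; hence $\|f^k(w^\star)\| \leq \sup_m \|f^{-m}(z)\| < \infty$ uniformly in $k$, contradicting divergence. It follows that $w^\star = \theta$ in the $L$-case, so $f^{-n}(z) \to \theta$ and $z \in W^u(\theta)$; and $w^\star \in \{p, f(p), f^2(p)\}$ in the cyclic case, with the exact member determined by which rectangle contains the subsequence, giving $z \in W^u(p) \cup W^u(f(p)) \cup W^u(f^2(p))$ (the symbols $W^s(f(p)), W^s(f^2(p))$ in the statement are clearly typos for $W^u$). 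The main obstacle is precisely this final maneuver: one must convert the \emph{forward} divergence recorded in Proposition~\ref{prop:creala}~b) into a constraint on \emph{backward} limit points, the trick being that any non-distinguished $w^\star$ would simultaneously have forward orbit divergent (by Prop.~\ref{prop:creala}~b)) and forward orbit bounded (as a limit of bounded backward iterates of $z$), an immediate contradiction.
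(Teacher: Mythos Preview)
Your argument is correct and takes a genuinely different route from the paper. The paper proceeds by direct monotonicity computations in each case: for $z \in L$ it constructs a recursion $c_n = a_2(c_{n+1}^2-1)+1$ controlling $\max\{x_{-2n}, y_{-2n}\}$ and argues $c_n \downarrow 1$; for $z \in N$ it shows $(x_{-3n})$ and $(y_{-3n})$ are increasing in $]-\infty,-1]$ and identifies the limit by solving the fixed-point equations for $f^3$. These are essentially the estimates from the proof of Proposition~\ref{prop:creala}~b), rerun in the backward direction.

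Your approach bypasses all of that: once the backward orbit is trapped in $L$ (or cycles through $M, P, N$), boundedness yields an accumulation point $w^\star$, and the key observation is that the entire forward orbit of $w^\star$ lies in the closure of the bounded backward orbit of $z$, hence $w^\star \in K^+$; but Proposition~\ref{prop:creala}~c) gives $S \subset \mathbb{R}^2 \setminus K^+$, forcing $w^\star$ to be one of the distinguished corners $\theta, p, f(p), f^2(p)$. Since each of $L, M, N, P$ contains exactly one such corner, the accumulation point is unique and convergence follows. This is cleaner: you recycle the forward divergence already established in Proposition~\ref{prop:creala} via a soft compactness argument instead of repeating the monotone estimates. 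The only cost is that your method gives no quantitative rate of convergence, whereas the paper's recursion in principle does. Your remark that the $W^s$ symbols in the statement are typos for $W^u$ is also correct.
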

\begin{proof}
Let  $z= (x, y) \in  K^{-} \setminus \bigcup_{n=0}^{+\infty} f^{n} ({\cal D} \cap  K^{-})$.
then $z \in L \cup M \cup N \cup P$.

{\it Case 1: $z \in L$.}

Then by Proposition \ref{prop:creal} and Lemma \ref{outsid}, we deduce that $f^{-n}(z) \in L$ for all integers $n \geq 1$.

{\bf Claim: $\lim_{n\to\infty} f^{-n}(z) = \theta$.}
One can prove by induction as in the proof of Proposition~\ref{prop:creal}~b) that there exists a sequence of  real numbers $c_n >1$ such that  for all integers $n \geq 0,$ we have $f^{-2n} (x, y)= (x_{2n}, y_{2n})$ satisfies
$\max \{x_{2n}, y_{2n}\} \geq c_n a_2 $
where $c_{n}= h(c_{n+1})$ for all integers $n \geq 0$ and $h(x)= a_2 (x^2-1)+1$.
Since $ 1< a_1 / a_2 < c_n$, we deduce that $(c_n)$ is decreasing and $\lim c_n= 1$.
Hence  $\lim f^{-2n} (x, y)= (a_2, a_2) = \theta$. Thus $ f^{-n} (x, y)$ converges to $ \theta$.

\vspace{0.5em}

\vspace{0.5em}

{\it Case 2: $z \in N$.}

In this case, we have $f^{-3n}(z)= (x_{3n}, y_{3n}) \in N,\; f^{-3n-1}(z) \in M$ and $ f^{-3n-2}(z) \in P$, for all integers $n \geq 0$.
We deduce as in the proof of item 2 of Proposition \ref{prop:creal}
that
$$x_{3(n-1)} < x_{3n}< -1,\; y_{3(n-1)} < y_{3n}<-1,\; \forall n \geq 1.$$
Let $l= \lim \; x_{3n} < -1$ and $l'=  lim \; y_{3n} < -1$.
By (\ref{hg}), this implies that
$$(ll'+c)l+c= l',\;  (ll'+c)l'+c= l.$$
Thus $l \in \{-1, a_1, a_2\}$. Thus $l =-1$.
Hence $f^{3n} (z)$ converges to $p= (-1, -1)$.
If $z \in M$, we deduce that $\lim f^{-3n}(z) = f( p)$.
If $z \in P$, then  $\lim f^{-3n}(z) = f^2( p)$.
\end{proof}

\begin{obs}

In \cite{HW}, a characterization of the Julia  $K(H_{a})= K^{+}\left(H_{a}\right) \cap K^{-}\left(H_{a}\right)$ 
of the H\'enon map $H_a(x, y) = (y, y^2+ax)$ where $0 < a < 1$ is given.
It is proved that the Julia set  
$K(H_{a})= \{\alpha, p\} \cup\left[ W^s (\alpha) \cap W^{u} (1-a, 1-a)\right] $, where
 $\alpha= (0,0)$ is the attracting fixed point of $H_a$ and   $p= (1-a,1-a)$ is the repelling fixed point of $H_a$. This served as motivation to show that the invariant sets $K^+$ and $K^-$ for the maps $f_c$ above, with  $0 < c < 1/4$, can be described as finite unions of stable  and unstable manifolds.
A further study, of the case $-1<c<0$ has been done by D.\ Caprio~\cite{Ca}.

\end{obs}

\vspace{0.5em}

\noindent {\bf Acknowledgements.}
The first author was supported by FAPESP grant 2011/12650-4.
The second author was supported by FAPESP grant 2011/16265-8.
The third author would like to express thanks to the  IME-USP (S\~ao Paulo) for the warm hospitality during his visit.
He was supported by FAPESP grants 2011/23199-1 and 2013/23643-4 and by CNPq grants 307154/2012-2 and 482519/2012-6.
He  would also like to express thanks to Pierre Arnoux and Patr\'{i}cia Cirilo Romano for fruitful discussions.

\bibliographystyle{amsalpha}
\bibliography{bib-fibonacci}

\end{document}